\theoremstyle{plain}
\newtheorem{thm}{Theorem}[section]
\newtheorem{lem}[thm]{Lemma}
\newtheorem{claim}[thm]{Claim}
\theoremstyle{definition}
\theoremstyle{remark}
\newtheorem{rmk}{Remark}
\numberwithin{equation}{section}
\numberwithin{figure}{section}
\numberwithin{table}{section}
\newcommand{\wt}{\operatorname{wt}}
\newcommand{\M}{\operatorname{M}}
\newcommand{\E}{\operatorname{E}}
\newcommand{\Od}{\operatorname{O}}
\newcommand{\Ob}{\operatorname{Ob}}
\begin{document}

\title[Enumeration of Hybrid Domino-Lozenge Tilings III]{Enumeration of Hybrid Domino-Lozenge Tilings III:\\ Centrally Symmetric Tilings}

\author{Tri Lai}
\address{Department of Mathematics, University of Nebraska -- Lincoln, Lincoln, NE 68588, USA}
\email{tlai3@unl.edu}
\thanks{T.L. was supported in part  by Simons Foundation Collaboration Grant (\# 585923).}

\subjclass[2010]{05A15,  05B45}

\keywords{perfect matchings, hybrid domino-lozenge tilings, dual graph, subgraph replacement.}

\begin{abstract}
We use the subgraph replacement method to investigate new properties of the tilings of regions on the square lattice with diagonals drawn in. In particular, we show that the centrally symmetric tilings of a generalization of the Aztec diamond are always enumerated by a simple product formula. This result generalizes the previous work of Ciucu (1997) and Yang (1992) about symmetric tilings of the Aztec diamond. We also use our method to prove a closed form product formula for the number of centrally symmetric tilings of a quasi-hexagon.
\end{abstract}

\maketitle

%\tableofcontents

\section{Introduction}\label{Intro}

The hybrid domino-lozenge tilings were first studied by J. Propp in the 1990s (see \cite{Propp} and the list of references therein). In 1996, C. Douglas  \cite{Doug} proved a conjecture posed by J. Propp about the number of tilings of  an analog of the Aztec diamond on the square lattice with every second diagonal\footnote{From now on,  we use the word ``\emph{diagonal}" to mean ``\emph{southwest-to-northeast diagonal}''} drawn in (see Figure \ref{Douglas}  for several first regions of Douglas and Figure \ref{sampletiling}(a) for a sample tiling). In particular, Douglas showed that the region of order $n$ has exactly $2^{2n(n+1)}$ tilings.

\begin{figure}\centering
\begin{picture}(0,0)%
\includegraphics{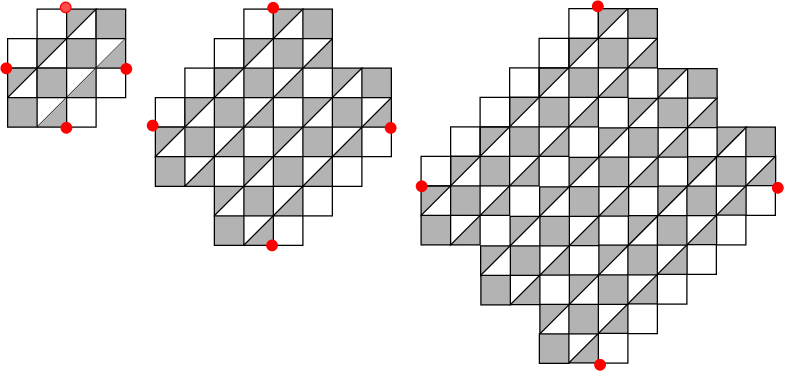}%
\end{picture}%
\setlength{\unitlength}{3947sp}%
\begingroup\makeatletter\ifx\SetFigFont\undefined%
\gdef\SetFigFont#1#2#3#4#5{%
  \reset@font\fontsize{#1}{#2pt}%
  \fontfamily{#3}\fontseries{#4}\fontshape{#5}%
  \selectfont}%
\fi\endgroup%
\begin{picture}(6283,2966)(402,-2292)
%  METADATA <id>66</id>
\put(2363,-1641){\makebox(0,0)[lb]{\smash{{\SetFigFont{12}{14.4}{\familydefault}{\mddefault}{\updefault}{$n=2$}%
}}}}
%  METADATA <id>67</id>
\put(6260,-1878){\makebox(0,0)[lb]{\smash{{\SetFigFont{12}{14.4}{\familydefault}{\mddefault}{\updefault}{$n=3$}%
}}}}
%  METADATA <id>65</id>
\put(828,-696){\makebox(0,0)[lb]{\smash{{\SetFigFont{12}{14.4}{\familydefault}{\mddefault}{\updefault}{$n=1$}%
}}}}
\end{picture}%
\caption{Several initial regions in  Douglas' Theorem \cite{Doug}: the regions of order $n=1$, $n=2$ and $n=3$. The figure was first introduced in \cite{Tri1}.}
\label{Douglas}
\end{figure}

\begin{figure}\centering
\includegraphics[width=7cm]{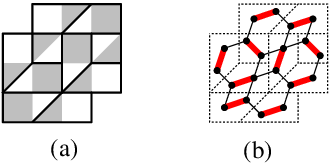}
\caption{ (a)  A tiling of the left region in Figure \ref{Douglas} and (b) the corresponding perfect matching of its dual graph.}\label{sampletiling}
\end{figure}

Recently, the author \cite{Tri6,Tri7} generalized Douglas' theorem and the Aztec diamond theorem of N. Elkies, G. Kuperberg, M. Larsen and J. Propp \cite{Elkies1,Elkies2} by enumerating tilings of a family of 4-sided regions on the square lattice with arbitrary diagonals drawn in (see Figure \ref{Douglasgen} for an example of the region). We call this region a \emph{Douglas region}\footnote{The region was called a \emph{generalized Douglas region} in \cite{Tri6,Tri7}.} (the detailed definition of the region will be given in the next section).  In particular, we showed that the tiling number of a Douglas region is always given by a power of $2$ (see Theorem 4 in \cite{Tri6}). This implies Douglas' theorem when the distances between any two consecutive drawn-in diagonals are $2\sqrt{2}$, and the Aztec diamond theorem when there is no drawn-in diagonal.

\begin{figure}\centering
\setlength{\unitlength}{3947sp}%
\begingroup\makeatletter\ifx\SetFigFont\undefined%
\gdef\SetFigFont#1#2#3#4#5{%
  \reset@font\fontsize{#1}{#2pt}%
  \fontfamily{#3}\fontseries{#4}\fontshape{#5}%
  \selectfont}%
\fi\endgroup%
\resizebox{10cm}{!}{
\begin{picture}(0,0)%
\includegraphics{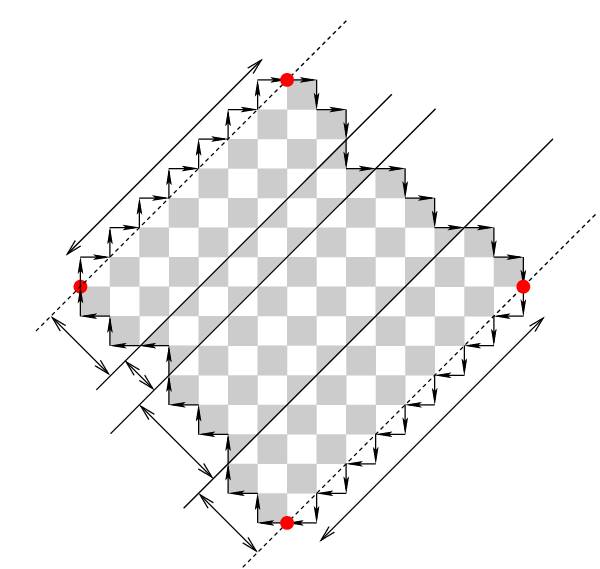}%
\end{picture}%

\begin{picture}(4794,4535)(924,-4323)
%  METADATA <id>117</id>
\put(5446,-1591){\makebox(0,0)[lb]{\smash{{\SetFigFont{12}{14.4}{\rmdefault}{\mddefault}{\itdefault}{$\ell'$}%
}}}}
%  METADATA <id>114</id>
\put(1800,-4216){\makebox(0,0)[lb]{\smash{{\SetFigFont{12}{14.4}{\rmdefault}{\mddefault}{\itdefault}{$\textbf{d}_4\frac{\sqrt{2}}{2}=\textbf{4}\frac{\sqrt{2}}{2}$}%
}}}}
%  METADATA <id>115</id>
\put(3136,-196){\makebox(0,0)[lb]{\smash{{\SetFigFont{12}{14.4}{\rmdefault}{\mddefault}{\itdefault}{$A$}%
}}}}
%  METADATA <id>116</id>
\put(3496, 36){\makebox(0,0)[lb]{\smash{{\SetFigFont{12}{14.4}{\rmdefault}{\mddefault}{\itdefault}{$\ell$}%
}}}}
%  METADATA <id>88</id>
\put(5347,-2182){\makebox(0,0)[lb]{\smash{{\SetFigFont{12}{14.4}{\rmdefault}{\mddefault}{\itdefault}{$B$}%
}}}}
%  METADATA <id>89</id>
\put(3221,-4308){\makebox(0,0)[lb]{\smash{{\SetFigFont{12}{14.4}{\rmdefault}{\mddefault}{\itdefault}{$C$}%
}}}}
%  METADATA <id>109</id>
\put(1704,-939){\makebox(0,0)[lb]{\smash{{\SetFigFont{12}{14.4}{\rmdefault}{\mddefault}{\itdefault}{$a=7$}%
}}}}
%  METADATA <id>110</id>
\put(4479,-3474){\makebox(0,0)[lb]{\smash{{\SetFigFont{12}{14.4}{\rmdefault}{\mddefault}{\itdefault}{$w=8$}%
}}}}
%  METADATA <id>111</id>
\put(600,-2821){\makebox(0,0)[lb]{\smash{{\SetFigFont{12}{14.4}{\rmdefault}{\mddefault}{\itdefault}{$\textbf{d}_1\frac{\sqrt{2}}{2}=\textbf{4}\frac{\sqrt{2}}{2}$}%
}}}}
%  METADATA <id>112</id>
\put(850,-3234){\makebox(0,0)[lb]{\smash{{\SetFigFont{12}{14.4}{\rmdefault}{\mddefault}{\itdefault}{$\textbf{d}_2\frac{\sqrt{2}}{2}=\textbf{2}\frac{\sqrt{2}}{2}$}%
}}}}
%  METADATA <id>113</id>
\put(1450,-3646){\makebox(0,0)[lb]{\smash{{\SetFigFont{12}{14.4}{\rmdefault}{\mddefault}{\itdefault}{$\textbf{d}_3\frac{\sqrt{2}}{2}=\textbf{5}\frac{\sqrt{2}}{2}$}%
}}}}
%  METADATA <id>90</id>
\put(1201,-2124){\makebox(0,0)[lb]{\smash{{\SetFigFont{12}{14.4}{\rmdefault}{\mddefault}{\itdefault}{$D$}%
}}}}
\end{picture}}
\caption{The Douglas region $\mathcal{D}_{7}(4,2,5,4)$. The figure was first introduced in \cite{Tri6}.}
\label{Douglasgen}
\end{figure}

Propp \cite{Propp} also investigated a `natural hybrid' between the Aztec diamond and a lozenge hexagon on the square lattice with every third diagonal drawn in, called a \emph{quasi-hexagon} and defined in detail in Section 2. Finding an explicit tiling formula for a quasi-hexagon was a long-standing open problem in the field (see Problem 16 on Propp's well-known list of 32 open problems in enumeration of tilings \cite{Propp}). The author \cite{Tri1} solved this problem by using the subgraph replacement method. In general, there is no simple product formula for the number of tilings of a quasi-hexagon. However, in the symmetric case, we have a simple product formula, which is a certain product of a power of $2$ and an instance of MacMahon's tiling formula (\ref{Maceq}) for a semi-regular hexagon on the triangular lattice \cite{McM}. The author \cite{Tri2} also enumerated tilings of an $8$-vertex counterpart of the quasi-hexagons, called \emph{quasi-octagons}.

Inspired by the work of B.-Y. Yang \cite{Yang} and M. Ciucu \cite{Ciucu} about the symmetric tilings of the Aztec diamond, we consider the  \emph{centrally symmetric tilings} (i.e. the tilings which are invariant under $180^{\circ}$ rotations) of a Douglas region. We actually investigate a more general case when certain portions of the region have been removed along a symmetry axis as in Figure \ref{Douglashole} (the black parts indicate the removed portions). We call this removed portions \emph{holes}. We show that the number of centrally symmetric tilings of such a Douglas region with holes is always given by a closed form product formula (see Theorem \ref{symthm1} in Section 2). See Figure \ref{Douglashole} for Douglas regions with holes and Figure \ref{sampletiling2}(a) for a centrally symmetric tiling of a Douglas region with holes.

The study of symmetric (lozenge) tilings of a hexagon on the triangular lattice  dates back to the late 1890s when MacMahon conjectured the $q$-enumeration of the symmetric plane partitions \cite{McM2}. About one hundred years later, all 10 symmetry classes of plane partitions were collected in Stanley's classical paper \cite{Stanley2}. Each of these symmetry classes can be translated into a certain class of symmetric lozenge tilings of a hexagon. As one of the 10 symmetry classes,  the \emph{self-complementary plane partitions} correspond to the centrally symmetric tilings of a hexagon. Stanley \cite{Stanley2}  showed that the number of self-complementary plane partitions, and hence the number of centrally symmetric tilings of a hexagon,  is always given by a simple product formula. Viewing a quasi-hexagon as a generalization of a lozenge hexagon,  we now investigate centrally symmetric tilings of  quasi-hexagons. In particular,  we use the subgraph replacement method to show that the number of centrally symmetric tilings of a quasi-hexagon is also given by a simple product formula (see Theorem \ref{hexsym} in Section 2). %This result extend the work of Stanley on self-complimentary plane partitions (\cite{Stanley2}) and Stembridge's work on symmetric plane partitions (\cite{Stem}).

The rest of this paper is organized as follows. We give detailed definitions of the Douglas regions and the quasi-hexagons, and the statements of our main results (Theorems \ref{symthm1} and \ref{hexsym}) in Section \ref{Statement}. Section \ref{Background} is devoted to several fundamental results in the subgraph replacement method that will be employed in our proofs. In Section \ref{Mainproof1}, we enumerate the perfect matchings  of an Aztec rectangle graph with holes, that itself  can be considered as a generalization of the related work of  B.Y. Yang \cite{Yang} and M. Ciucu \cite{Ciucu} in the case of the Aztec diamonds. We will use this enumeration in our  proof of Theorem \ref{symthm1} in Section \ref{Mainproof2}. Finally, in Section \ref{Mainproof3},  we present the proof of Theorem \ref{hexsym}.

\section{Statement of the main results}\label{Statement}

A lattice divides the plane into disjoint fundamental regions, called \emph{cells}. A (lattice) \textit{region} is a finite connected union of cells. A \textit{tile} is the union of any two cells sharing an edge. A \textit{tiling} of a region $\mathcal{R}$ is a covering of $\mathcal{R}$ by tiles, such that there are no gaps or overlaps. The number of tilings of the region $\mathcal{R}$ is denoted by $\M(\mathcal{R})$.

Let $\ell$ be a fixed drawn-in diagonal on the square lattice. Assume that $k$ more diagonals have been drawn in above $\ell$ with the distances between two consecutive ones from the top $d_1\frac{\sqrt{2}}{2}$, $d_2\frac{\sqrt{2}}{2}$,\dots,$d_k\frac{\sqrt{2}}{2}$, and $k'$ more diagonals have been drawn in below $\ell$ with the distances between two consecutive ones from the bottom $d'_1\frac{\sqrt{2}}{2}$, $d'_2\frac{\sqrt{2}}{2}$,\dots,$d'_{k'}\frac{\sqrt{2}}{2}$ (see Figure \ref{hexagonregion}). Next, we color black and white the dissection obtained from the above set-up of drawn-in diagonals on the square lattice, so that two cells sharing an edge have different colors.

We define the \emph{quasi-hexagon} $\mathcal{H}_{a}(d_1,d_2,\dots,d_k; d'_1,d'_2,\dots,d'_{k'})$ as follows. Pick a lattice point $A$ on the the top drawn-in diagonal. Starting from $A$, we go south or east in each step so that the black cell stays on the left. The resulting lattice path from $A$ intersects the diagonal $\ell$ at a lattice point $B$. From $B$, we go south or east so that the white cell stays on the left in each step. Our lattice path stops when reaching the bottom drawn-in diagonal at a lattice point $C$. The described lattice path passing $A$, $B$ and $C$ is the southwestern boundary of the region. Next, we pick a lattice point $F$ on the top drawn-in diagonal such that $F$ is $a\sqrt{2}$ units to the right of $A$. The northeastern boundary is obtained from the southwestern one by reflecting about the perpendicular bisector of the segment $AF$. Assume that the northeastern boundary intersects $\ell$ and the bottom drawn-in diagonal at $E$ and $D$, respectively. We complete the boundary of the region by connecting $C$ and $D$, and $F$ and $A$ along the corresponding drawn-in diagonals. The six lattice points $A,B,C,D,E,$ and $F$ are called the \emph{vertices} of the region, and the diagonal $\ell$ is called the (southwest-to-northeast) \emph{axis} of the region.

\begin{figure}\centering
\begin{picture}(0,0)%
\includegraphics{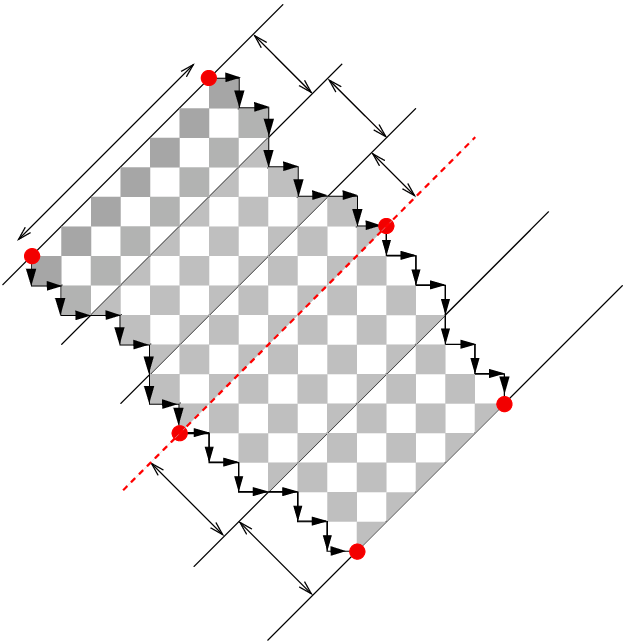}%
\end{picture}
\setlength{\unitlength}{3947sp}%
\begingroup\makeatletter\ifx\SetFigFont\undefined%
\gdef\SetFigFont#1#2#3#4#5{%
  \reset@font\fontsize{#1}{#2pt}%
  \fontfamily{#3}\fontseries{#4}\fontshape{#5}%
  \selectfont}%
\fi\endgroup%
\begin{picture}(5276,5190)(406,-4616)
%  METADATA <id>415</id>
\put(2200,-4276){\makebox(0,0)[lb]{\smash{{\SetFigFont{12}{14.4}{\familydefault}{\mddefault}{\updefault}{$d'_1\frac{\sqrt{2}}{2}$}%
}}}}
%  METADATA <id>414</id>
\put(3200,-229){\makebox(0,0)[lb]{\smash{{\SetFigFont{12}{14.4}{\familydefault}{\mddefault}{\updefault}{$d_2\frac{\sqrt{2}}{2}$}%
}}}}
%  METADATA <id>413</id>
\put(3500,-793){\makebox(0,0)[lb]{\smash{{\SetFigFont{12}{14.4}{\familydefault}{\mddefault}{\updefault}{$d_3\frac{\sqrt{2}}{2}$}%
}}}}
%  METADATA <id>411</id>
\put(2600,158){\makebox(0,0)[lb]{\smash{{\SetFigFont{12}{14.4}{\familydefault}{\mddefault}{\updefault}{$d_1\frac{\sqrt{2}}{2}$}%
}}}}
%  METADATA <id>330</id>
\put(4400,-646){\makebox(0,0)[lb]{\smash{{\SetFigFont{12}{14.4}{\familydefault}{\mddefault}{\updefault}{$\ell$}%
}}}}
%  METADATA <id>331</id>
\put(800,-586){\makebox(0,0)[lb]{\smash{{\SetFigFont{12}{14.4}{\familydefault}{\mddefault}{\updefault}{$a\sqrt{2}$}%
}}}}
%  METADATA <id>416</id>
\put(1400,-3778){\makebox(0,0)[lb]{\smash{{\SetFigFont{12}{14.4}{\familydefault}{\mddefault}{\updefault}{$d'_2\frac{\sqrt{2}}{2}$}%
}}}}
%  METADATA <id>277</id>
\put(0,-1704){\makebox(0,0)[lb]{\smash{{\SetFigFont{12}{14.4}{\familydefault}{\mddefault}{\updefault}{$A$}%
}}}}
%  METADATA <id>278</id>
\put(1400,-3076){\makebox(0,0)[lb]{\smash{{\SetFigFont{12}{14.4}{\familydefault}{\mddefault}{\updefault}{$B$}%
}}}}
%  METADATA <id>279</id>
\put(3400,-4141){\makebox(0,0)[lb]{\smash{{\SetFigFont{12}{14.4}{\familydefault}{\mddefault}{\updefault}{$C$}%
}}}}
%  METADATA <id>282</id>
\put(4600,-2986){\makebox(0,0)[lb]{\smash{{\SetFigFont{12}{14.4}{\familydefault}{\mddefault}{\updefault}{$D$}%
}}}}
%  METADATA <id>283</id>
\put(2000,344){\makebox(0,0)[lb]{\smash{{\SetFigFont{12}{14.4}{\familydefault}{\mddefault}{\updefault}{$F$}%
}}}}
%  METADATA <id>284</id>
\put(3700,-1329){\makebox(0,0)[lb]{\smash{{\SetFigFont{12}{14.4}{\familydefault}{\mddefault}{\updefault}{$E$}%
}}}}
\end{picture}
\caption{The quasi-hexagon $\mathcal{H}_{6}(4,4,3;\  5,5)$. The figure first appeared in \cite{Tri1}.}
\label{hexagonregion}
\end{figure}

The cells in a quasi-hexagon are unit squares or triangles. The triangular cells only appear along the drawn-in diagonals. A \textit{row of cells} consists of all the triangular cells of a given color with bases resting on a fixed lattice diagonal, or  consists of all the square cells\footnote{From now on, we use the words ``\emph{triangle(s)}" and ``\emph{square(s)}" to mean ``\emph{triangular cell(s)}" and ``\emph{square cell(s)}", respectively.} (of a given color) passed through by a fixed lattice diagonal.

Define the \emph{Douglas region}  $\mathcal{D}_{a}(d_1,\dots,d_k)$ to be the region obtained from the portion of the region  $\mathcal{H}_{a}(d_1,\dots,d_k; d'_1,\dots,d'_{k'})$ above the axis $\ell$ by replacing the triangles running along the top and the bottom by squares of the same color (see Figure \ref{Douglasgen}). The Douglas region was first investigated in \cite{Tri1}, and also in \cite{Tri6} and \cite{Tri7}, as a common generalization of Douglas'  original regions \cite{Doug} and the Aztec diamonds \cite{Elkies1,Elkies2}.

\begin{rmk}
As mentioned in \cite{Tri1} (Theorem 2.1(a) and Theorem 2.3(a)), if the triangles running along the bottom of a  quasi-hexagon or a Douglas region are black, then the region has no tilings. \textit{Therefore, from now on, we assume that the bottom triangles are white.} This is equivalent to the fact that the last step of the southwestern boundary is an east step.
\end{rmk}

For any finite set of integers  $A=\{a_1,a_2,\dotsc,a_n\}$, $n\geq 0$, we define four functions
\begin{equation}\label{E}
\E(a_1,a_2,\dotsc,a_n)=\frac{2^{n^2}}{0!2!4!\dotsc(2n-2)!}\prod_{1\leq i<j\leq n}(a_j-a_i)\prod_{1\leq i<j\leq n}(a_i+a_j-1),
\end{equation}
\begin{equation}\label{O}
\Od(a_1,a_2,\dotsc,a_n)=\frac{2^{n^2}}{1!3!5!\dotsc(2n-1)!}\prod_{1\leq i<j \leq n}(a_j-a_i)\prod_{1\leq i\leq j \leq n}(a_i+a_j-1),
\end{equation}
\begin{equation}\label{Eb}
\overline{\E}(a_1,a_2,\dotsc,a_n)=\frac{2^{n^2}\prod_{i=1}^{n}a_i}{0!2!4!\dotsc(2n-2)!}\prod_{1\leq i<j\leq n}(a_j-a_i)\prod_{1\leq i\leq j\leq n}(a_i+a_j),
\end{equation}
and
\begin{equation}\label{Ob}
\overline{\Od}(a_1,a_2,\dotsc,a_n)=\frac{2^{n^2}\prod_{i=1}^{n}a_i}{1!3!5!\dotsc(2n-1)!}\prod_{1\leq i<j\leq n}(a_j-a_i)\prod_{1\leq i< j\leq n}(a_i+a_j),
\end{equation}
where the empty products are equal to 1 by convention. The functions $\E$ and $\Od$ were introduced by Jockusch and Propp in \cite{JP} as the number of the so-called \emph{anti-symmetric monotone triangles}, and the functions $\overline{\E}$ and $\overline{\Od}$ were introduced  by the author in \cite{Tri3} as the tiling numbers of a family of regions called \emph{quartered Aztec rectangles}.

Consider a Douglas region $\mathcal{D}:=\mathcal{D}_{a}(\textbf{d})=\mathcal{D}_{a}(d_1,d_2,\dotsc,d_k)$ that  admits the southwest-to-northeast symmetry axis $\alpha$. It is easy to see that we must have (1) $d_{i}=d_{k-i+1}$,  (2) $k$ is odd, and (3) $\alpha$ is \textit{not} a drawn-in diagonal (i.e., all the cells running along $\alpha$ are squares). We label the squares passed by $\alpha$ as follows. If the symmetry center of $\mathcal{D}$ stays inside one of these squares, we call this square the \textit{central cell}, and label it by $0$. Next, we label the two squares closest to the center by $1$, we label the two squares that are second closest to the center by $2$, and so on (see Figure \ref{Douglashole}). A cell of $\mathcal{D}$ is said to be \emph{regular} if it is either a black square or a black triangle pointing away from $\alpha$. We define the \emph{height} $h(\mathcal{D})$ of $\mathcal{D}$ to be the number of rows of regular cells above $\alpha$ or passed by $\alpha$. By the symmetry, $h(D)$ is also the number of rows of regular cells below $\alpha$ or passed by $\alpha$. The number of regular cells which is on or above $\alpha$ is denoted by $\mathcal{C}(\mathcal{D})$, and we usually call it the \emph{number of upper regular cells}.  The number $w(\mathcal{D})$ of squares passed through by $\alpha$ is called the \emph{width} of $\mathcal{D}$. We call a row of an odd number of black triangles pointing toward $\alpha$ and above $\alpha$ a \textit{singular row}. The number of singular rows $\tau(\mathcal{D})$ is called the \textit{defect} of $\mathcal{D}$. For example the left region in Figure  \ref{Douglashole} has respectively the height, the number of upper regular cells, the width, and the defect $5,63,12,0$; the region on the right of the figure has these parameters $5,41,7,1$, respectively.

We remove all squares having labels in a subset $\mathcal{S}$ of $\{0,1,2,\dotsc,\lfloor \frac{w(\mathcal{D})}{2}\rfloor\}$ along $\alpha$. Denote by the resulting region by $\mathcal{D}_{a}(\textbf{d};\mathcal{S})$ (see Figure \ref{Douglashole} for examples; the black squares indicate the ones that have been removed). 
\begin{figure}\centering
\setlength{\unitlength}{3947sp}%
\begingroup\makeatletter\ifx\SetFigFont\undefined%
\gdef\SetFigFont#1#2#3#4#5{%
  \reset@font\fontsize{#1}{#2pt}%
  \fontfamily{#3}\fontseries{#4}\fontshape{#5}%
  \selectfont}%
\fi\endgroup%
\resizebox{14cm}{!}{
\begin{picture}(0,0)%

\includegraphics{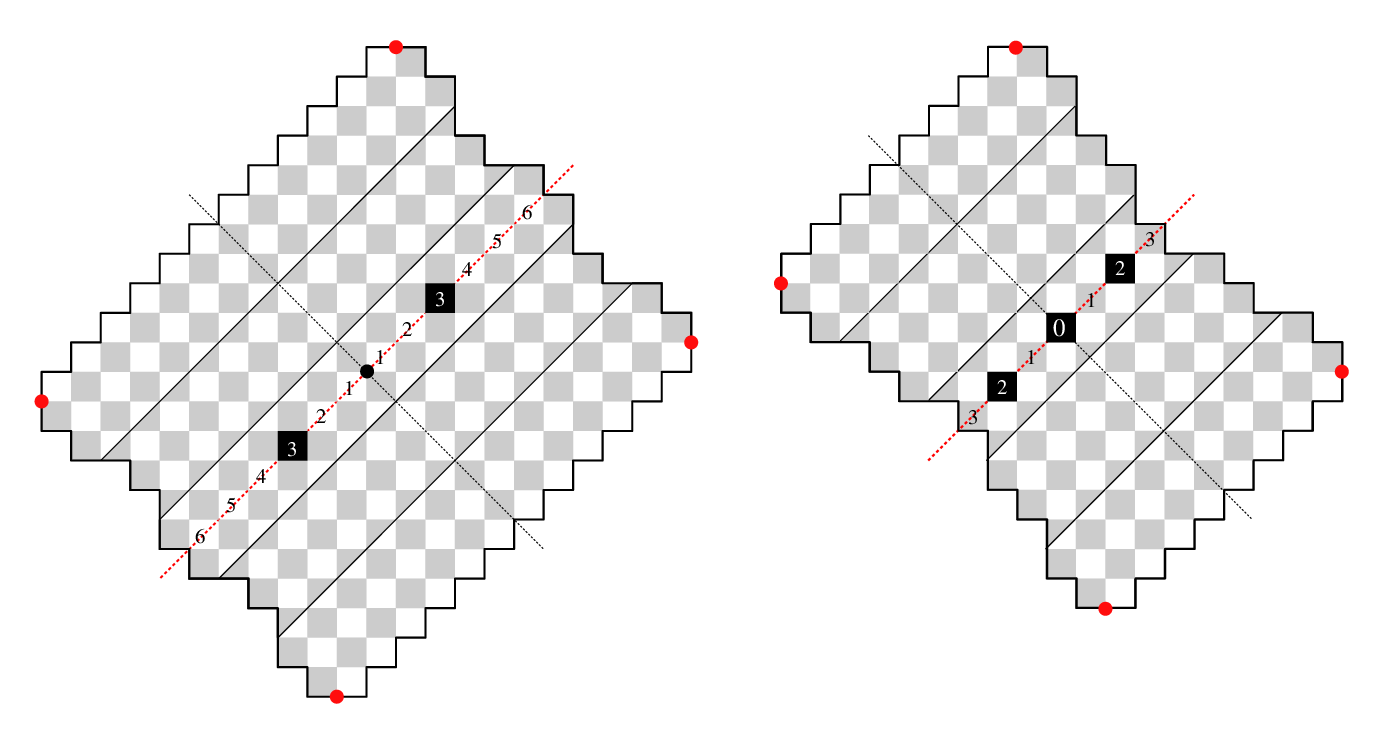}%
\end{picture}%
%
%  Created by WinFIG version 6.2

\begin{picture}(11085,5873)(849,-5373)
%  METADATA <id>172</id>
\put(5552,-874){\makebox(0,0)[lb]{\smash{{\SetFigFont{10}{12.0}{\rmdefault}{\mddefault}{\updefault}{$\alpha$}%
}}}}
%  METADATA <id>336</id>
\put(10630,-866){\makebox(0,0)[lb]{\smash{{\SetFigFont{10}{12.0}{\rmdefault}{\mddefault}{\updefault}{$\alpha$}%
}}}}
%  METADATA <id>182</id>
\put(3469,-5358){\makebox(0,0)[lb]{\smash{{\SetFigFont{12}{14.4}{\rmdefault}{\mddefault}{\updefault}{$C$}%
}}}}
%  METADATA <id>180</id>
\put(3960,281){\makebox(0,0)[lb]{\smash{{\SetFigFont{12}{14.4}{\rmdefault}{\mddefault}{\updefault}{$A$}%
}}}}
%  METADATA <id>317</id>
\put(8919,277){\makebox(0,0)[lb]{\smash{{\SetFigFont{12}{14.4}{\rmdefault}{\mddefault}{\updefault}{$A$}%
}}}}
%  METADATA <id>321</id>
\put(11792,-2542){\makebox(0,0)[lb]{\smash{{\SetFigFont{12}{14.4}{\rmdefault}{\mddefault}{\updefault}{$B$}%
}}}}
%  METADATA <id>183</id>
\put(864,-2776){\makebox(0,0)[lb]{\smash{{\SetFigFont{12}{14.4}{\rmdefault}{\mddefault}{\updefault}{$D$}%
}}}}
%  METADATA <id>325</id>
\put(9618,-4655){\makebox(0,0)[lb]{\smash{{\SetFigFont{12}{14.4}{\rmdefault}{\mddefault}{\updefault}{$C$}%
}}}}
%  METADATA <id>329</id>
\put(6779,-1832){\makebox(0,0)[lb]{\smash{{\SetFigFont{12}{14.4}{\rmdefault}{\mddefault}{\updefault}{$D$}%
}}}}
%  METADATA <id>181</id>
\put(6587,-2308){\makebox(0,0)[lb]{\smash{{\SetFigFont{12}{14.4}{\rmdefault}{\mddefault}{\updefault}{$B$}%
}}}}
\end{picture}}
\caption{The Douglas regions with holes: $\mathcal{D}_{12}(4,4,4,4,4; \{3\})$ (left) and $\mathcal{D}_{8}(4,5,4,5,4; \{2\})$ (right).}\label{Douglashole}
\end{figure}

\begin{figure}\centering
\includegraphics[width=12cm]{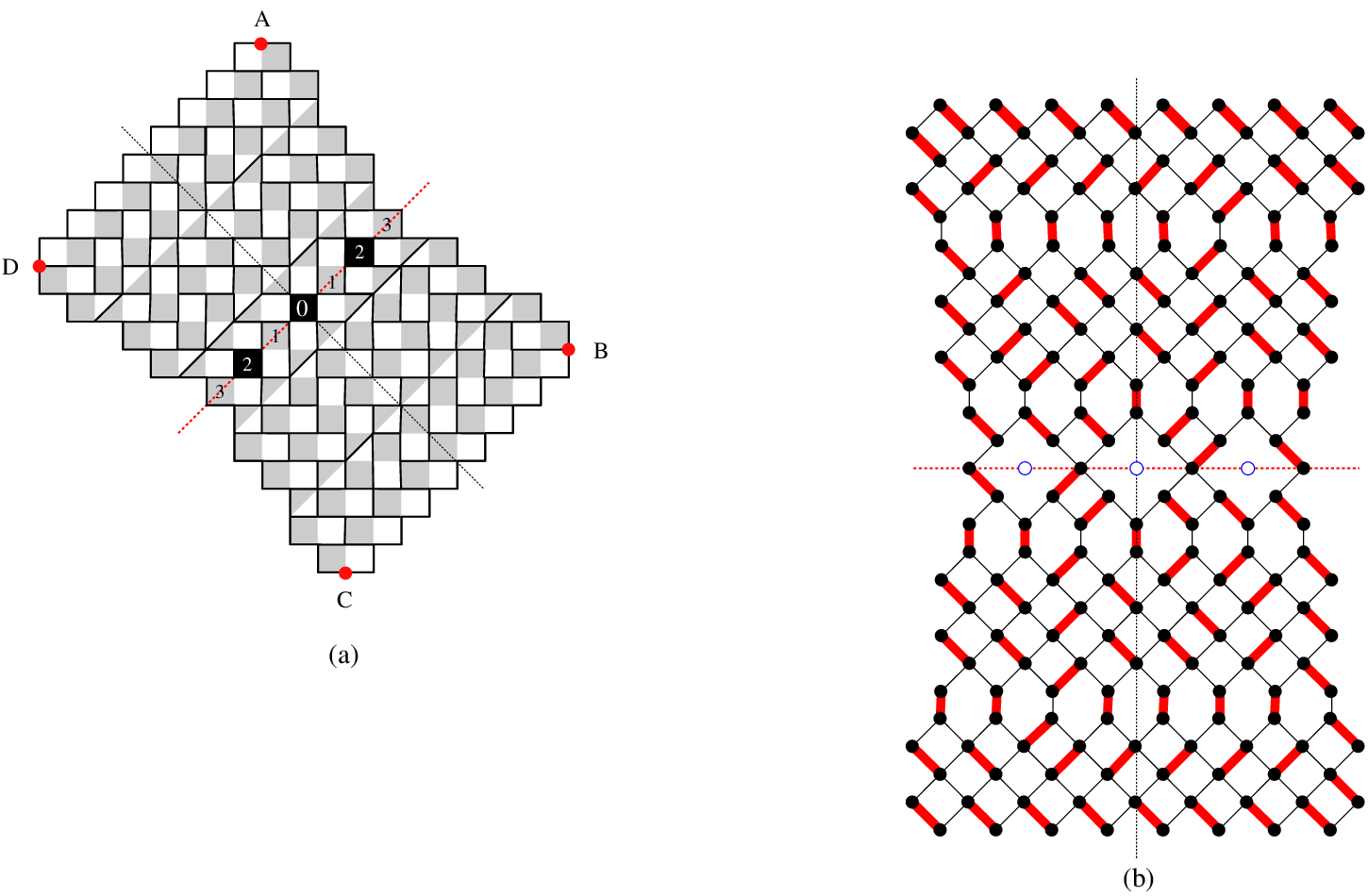}
\caption{(a) A centrally symmetric tiling of the region $\mathcal{D}_{8}(4,5,4,5,4; \{2\})$ in Figure \ref{Douglashole} (right).  (b) A centrally symmetric perfect matching of the dual graph of $\mathcal{D}_{8}(4,5,4,5,4; \{2\})$ (rotated $45^{\circ}$).}\label{sampletiling2}
\end{figure}

We notice that if $\mathcal{D}_{a}(\textbf{d};\mathcal{S})$ admits a tiling, then the number of squares removed equals  $|w(\mathcal{D})-2h(\mathcal{D})|$ if $\alpha$ passes white squares, and equals $|w(\mathcal{D})-2h(\mathcal{D})+1|$, otherwise. Moreover, in the latter case, the central cell must be removed. The number of centrally symmetric tilings of the region  $\mathcal{D}_{a}(\textbf{d};\mathcal{S})$ is given by the following theorem. In this paper, we use the notation $\M^*(\mathcal{R})$ for the number of centrally symmetric tilings of $\mathcal{R}$.

\begin{thm}\label{symthm1}
Consider a positive integer $a$ and a sequence of positive integers $\textbf{d}=(d_i)_{i=1}^{k}$ so that the Douglas region $\mathcal{D}:=\mathcal{D}_{a}(\textbf{d})$ admits a southwest-to-northeast symmetry axis $\alpha$ and has width $w=w(\mathcal{D})$, height $h=h(\mathcal{D})$, defect $\tau=\tau(\mathcal{D})$, and number of upper regular cells $\mathcal{C}=\mathcal{C}(\mathcal{D})$. We remove all squares running along $\alpha$ with labels in $\mathcal{S}\subset\{0,1,2,\dotsc,\lfloor \frac{w}{2}\rfloor\}$ so that $|\mathcal{S}|=|w-2h|$ if $\alpha$ passes white squares, and  $|\mathcal{S}|=|w-2h+1|$, otherwise. Assume that the complement of $\mathcal{S}$ is $\{i_1,i_2,\dotsc,i_k\}$, for $1\leq i_1<i_2<\cdots<i_k\leq \lfloor \frac{w(D)}{2}\rfloor$.  We define $\mathcal{O}=\mathcal{O}_{\mathcal{D}}:=\{i_j: \text{$j$ is odd}\}$ and $\mathcal{E}=\mathcal{E}_{\mathcal{D}}:=\{i_j: \text{$j$ is even}\}$.

\medskip

(a) Assume that $\alpha$ passes white squares  and $w\geq 2h$. Then
\begin{equation}\label{symeq1}
\M^*(\mathcal{D}_{a}(\textbf{d};\mathcal{S}))=2^{\mathcal{C}-(w-1)h-\tau}\E(\mathcal{O})\Od(\mathcal{E})
\end{equation}
if $w$ is even;
\begin{equation}\label{symeq2}
\M^*(\mathcal{D}_{a}(\textbf{d};\mathcal{S}))=2^{\mathcal{C}-(w-1)h-\tau-1}\overline{\E}(\mathcal{O})\overline{\Od}(\mathcal{E})
\end{equation}
if $h$ and $w$ are odd;
\begin{equation}\label{symeq3}
\M^*(\mathcal{D}_{a}(\textbf{d};\mathcal{S}))=
2^{\mathcal{C}-(w-1)h-\tau}\overline{\E}(\mathcal{O})  \overline{\Od}(\mathcal{E})
\end{equation}
if $h$ is even and  $w$ is odd.

\medskip

(b) Assume that $\alpha$ passes black squares and $2h-1\geq w\geq h$. Then
\begin{equation}\label{symeq4}
\M^*(\mathcal{D}_{a}(\textbf{d};\mathcal{S}))=\frac{2^{\mathcal{C}-wh-\tau+\frac{w-1}{2}}}{(h-2)!}\overline{\E}(\mathcal{S}\cup \mathcal{E})\overline{\Od}(\mathcal{S}\cup \mathcal{O})
\end{equation}
if $h$ is even and $w$ is odd;
\begin{equation}\label{symeq5}
\M^*(\mathcal{D}_{a}(\textbf{d};\mathcal{S}))=\frac{2^{\mathcal{C}-wh-\tau+\frac{w-1}{2}}}{(h-1)!}\overline{\E}(\mathcal{S}\cup \mathcal{E})\overline{\Od}(\mathcal{S}\cup \mathcal{O})
\end{equation}
if $h$ and $w$ are odd;
\begin{equation}\label{symeq6}
\M^*(\mathcal{D}_{a}(\textbf{d};\mathcal{S}))=2^{\mathcal{C}-(w+1)h-\tau+\frac{w}{2}}\E(\mathcal{S}\cup \mathcal{E}) \Od(\mathcal{S}\cup \mathcal{O})
\end{equation}
if $w$ is even.
\end{thm}

We notice that if $\alpha$ passes white squares and $w<2h$, then the region $\mathcal{D}_{a}(\textbf{d};\mathcal{S})$ has no tiling (since the numbers of black cells and and white cells are not equal). Similarly, if  $\alpha$ passes black squares and $w>h$, then the region $\mathcal{D}_{a}(\textbf{d};\mathcal{S})$ has no tiling by the same reason. The condition $w\geq h$ ensures that the number of removed cells, i.e. $|\mathcal{S}|$, must be less than or equal the total number of cells on $\alpha$.

We consider next the centrally symmetric tilings of a symmetric quasi-hexagon \[\mathcal{H}_{a}(\textbf{d};\textbf{d}):=\mathcal{H}_{a}( d_1,d_2,\dotsc,d_k; d_1,d_2,\dotsc,d_k).\]
Define the  function
\begin{equation}\label{Maceq}
P(a,b,c):=\prod_{i=1}^{a}\prod_{j=1}^{b}\prod_{k=1}^c\frac{i+j+k-1}{i+j+k-2}.
\end{equation}
This is exactly the number of plane partitions fitting in an $a\times b\times c$  box \cite{McM}.

A \emph{regular} cell of a quasi-hexagon $\mathcal{H}$ is either a square or a triangle pointing away from the axis $\ell$. We notice that regular cells in a quasi-hexagon may be black or white (as opposed to being only black in the case of Douglas regions). Denote by $h_1(\mathcal{H})$ and $h_2(\mathcal{H})$ the number of rows of black regular cells above $\ell$ and the number rows of white regular cells below $\ell$, respectively. We call $h_1(\mathcal{H})$ and $h_2(\mathcal{H})$ the \emph{upper} and \emph{lower heights} of $\mathcal{H}$.  Denote by $\mathcal{C}_1(\mathcal{H})$ and $\mathcal{C}_2(\mathcal{H})$ the number of black regular cells above $\ell$ and the number of white regular cells below $\ell$, respectively. In the case when the quasi-hexagon $\mathcal{H}$ admits a southwest-to-northeast symmetry axis, we have $h_1(\mathcal{H})=h_2(\mathcal{H})$ and $\mathcal{C}_1(\mathcal{H})=\mathcal{C}_2(\mathcal{H})$.  The \emph{width} $w(\mathcal{H})$ of $\mathcal{H}$ is the number of cells running along each side of $\ell$.  We still call a row of an odd number of black triangles pointing toward $\ell$ and above $\ell$ a \textit{singular row} of $\mathcal{H}$. The number of singular rows $\tau(\mathcal{H})$ is also called the \textit{defect} of $\mathcal{H}$.

The number of centrally symmetric tilings of  a quasi-hexagon is given by the following theorem.

\begin{thm}\label{hexsym} Let $a$ be a positive integer and $\textbf{d}=(d_1,d_2,\dotsc, d_k)$ be a sequence of  positive integers, such that the symmetric quasi-hexagon $\mathcal{H}:=\mathcal{H}_{a}(\textbf{d};\textbf{d})$ has the heights $h=h_1(\mathcal{H})=h_2(\mathcal{H})$ less than or equal to the width $w$. Assume that $\mathcal{C}=\mathcal{C}_1(\mathcal{H})$ is the number of black regular cells above $\ell$ (and also the number of white regular cells below $\ell$ by the symmetry), and that $\tau$ is the defect of $\mathcal{H}$.

(a) If $h$ and $w$ are even, then
\begin{equation}
\M^*(\mathcal{H}_{a}(\textbf{d};\textbf{d}))=2^{\mathcal{C}-\frac{h(2w-h+1)}{2}-\tau} P\left(\frac{h}{2},\frac{h}{2},\frac{w-h}{2}\right)^2.
\end{equation}

(b) If $h$ is even and $w$ is odd, then
\begin{equation}
\M^*(\mathcal{H}_{a}(\textbf{d};\textbf{d}))=2^{\mathcal{C}-\frac{h(2w-h+1)}{2}-\tau} P\left(\frac{h}{2},\frac{h}{2},\frac{w-h-1}{2}\right)P\left(\frac{h}{2},\frac{h}{2},\frac{w-h+1}{2}\right).
\end{equation}

(c) If  $h$ is odd and $w$ is even, then
\begin{equation}
\M^*(\mathcal{H}_{a}(\textbf{d};\textbf{d}))=2^{\mathcal{C}-\frac{h(2w-h+1)}{2}-\tau} P\left(\frac{h-1}{2},\frac{h+1}{2},\frac{w-h}{2}\right)^2.
\end{equation}
\end{thm}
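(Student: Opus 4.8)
The plan is to prove Theorem~\ref{hexsym} by the same subgraph-replacement strategy that the author used to enumerate the (non-symmetric) quasi-hexagons in \cite{Tri1}, but carried out in a way that preserves the central symmetry, so that the problem is reduced to counting centrally symmetric lozenge tilings of a semi-regular hexagon on the triangular lattice. That latter quantity is already evaluated by Stanley's self-complementary plane partition formula, recorded as $P(a,b,c)$ in Theorem 6.1 of \cite{Ciucu6}. Concretely, I would first pass to the planar dual graph $G$ of $\mathcal{H}_{a}(\textbf{d};\textbf{d})$, whose perfect matchings are in bijection with tilings. The $180^\circ$ rotation of the region induces a fixed-point-free involution $\sigma$ on $G$, and the centrally symmetric tilings are exactly the $\sigma$-invariant perfect matchings; thus $\M^*(\mathcal{H}_{a}(\textbf{d};\textbf{d}))$ equals the number of such matchings.

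The key observation is that every local replacement move from Section~\ref{Background} can be applied simultaneously to an orbit $\{R,\sigma(R)\}$ of disjoint subgraphs, so the whole reduction can be performed $\sigma$-equivariantly and the reduced graph still carries the central symmetry. Following the reduction in \cite{Tri1}, I would use these moves to strip away the square (Aztec-type) portions of $G$ lying above and below the axis $\ell$, converting them into portions of the hexagonal (triangular-lattice) dual, until the reduced graph $H$ is the dual of a semi-regular hexagon. Because each symmetric pair of moves contributes a fixed multiplicative constant (a power of $2$) and an invariant matching is determined by its behaviour in the upper half, one gets $\M^*(\mathcal{H}_{a}(\textbf{d};\textbf{d}))=2^{N}\,\M^*(H)$, where $N$ counts the replacement moves performed in the upper half. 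Counting these moves and simplifying, I expect $N$ to equal the exponent $\mathcal{C}-\tfrac{h(2w-h+1)}{2}-\tau$: the black regular cells above $\ell$ supply the leading term $\mathcal{C}$, the normalization $\tfrac{h(2w-h+1)}{2}=wh-\binom{h}{2}$ comes from the triangular count of moves, and each singular row forces a tile that removes one move, accounting for the $-\tau$ (exactly as in Theorem~\ref{symthm1}).

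Finally, I would evaluate $\M^*(H)$ via the self-complementary plane partition formula $P$. Cutting the reduced symmetric hexagon along its axis factors the centrally symmetric count into a product of two half-hexagon counts. The parity of the width $w$ governs whether this cut yields two congruent halves, giving a squared factor $P(\cdots)^2$ as in cases (a) and (c), or two halves differing by a single layer, giving a product $P(\cdots)\,P(\cdots)$ of distinct factors as in case (b). The parity of the height $h$ then fixes the first two arguments of $P$: they coincide, $\bigl(\tfrac h2,\tfrac h2\bigr)$, when $h$ is even, and differ by one, $\bigl(\tfrac{h-1}{2},\tfrac{h+1}{2}\bigr)$, when $h$ is odd. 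Reading off the remaining side parameter from $w-h$ in each of the three admissible parity combinations then produces formulas (a)--(c).

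The main obstacle I anticipate is carrying out the reduction $\sigma$-equivariantly along the axis $\ell$ and through the center of symmetry, since that is precisely where the symmetry constraint is active: the replacement moves meeting $\ell$, the forced tiles, and the singular rows there must be resolved symmetrically so that no spurious factor is introduced and so that self-symmetric moves near the center (the orbits with $R=\sigma(R)$) are handled without double-counting. A secondary difficulty is the exact bookkeeping of the accumulated power of $2$ in each parity case and the precise identification of the reduced hexagon's side parameters, including verifying that the central cut produces exactly the $P$-arguments claimed.
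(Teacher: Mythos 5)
Your high-level roadmap --- strip the quasi-hexagon down to a semi-regular hexagon by subgraph replacements, track a power of $2$, and finish with the self-complementary plane partition formula --- matches the paper's skeleton, but the step you build everything on is a genuine gap. The replacement lemmas (Vertex-splitting, Star, Spider, and the composite transformations of Lemmas \ref{newT1}--\ref{newT4}) are identities for the \emph{total} weighted matching count; they do not transfer to counts of $\sigma$-invariant matchings. In a $\sigma$-invariant matching the local configurations at $R$ and $\sigma(R)$ are not independent (the second is the mirror of the first), so the orbit $\{R,\sigma(R)\}$ contributes the \emph{square} of the local weight; since several of the needed transformations carry edges of weight $1/2$, the quantity you must control is $\sum_{c}\wt(c)^2$, about which the lemmas (statements on $\sum_{c}\wt(c)$) say nothing. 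Moreover, the orbits with $R=\sigma(R)$ --- precisely the pieces straddling the center and the axis $\ell$, where the triangles, singular rows, and the defect $\tau$ live --- are the ones you admit you cannot handle; that is not a side issue but the crux. The paper avoids all of this by reversing the order of operations: it first converts the symmetric count into an ordinary count, $\M^*(\mathcal{H})=\M(\Ob(G))$, where $\Ob(G)$ is the orbit graph obtained by identifying vertex pairs on the vertical axis, and only then applies the replacement machinery, as ordinary matching-count identities, to $\Ob(G)$ in a layer-stripping induction. Even there a subtlety arises that your plan does not anticipate: unlike in Theorem \ref{symthm1}, the axis vertices of $\Ob(G)$ do \emph{not} form a cut set, so Ciucu's Factorization Theorem \ref{factorthm} cannot be applied directly; the paper instead invokes Ciucu's Lemma 2.1 together with a forcing argument on the $b_j$'s to pass to a cut graph $Si(G)$ with $\M(\Ob(G))=2^{\eta(G)}\M(Si(G))$, runs the induction on $Si(G)$, and needs a separate case analysis (with an extra transformation) according to the color of the triangles along $\ell$.

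Your final step is also not a proof as stated: ``cutting the reduced symmetric hexagon along its axis'' does not factor the centrally symmetric count into a product of two independent half-hexagon counts --- a centrally symmetric tiling is determined by one half, so the halves are not independent, and each factor $P$ is itself a self-complementary plane partition number, not the count of a half-hexagon. The correct move, and what the paper does, is to quote the known enumeration of centrally symmetric lozenge tilings of the semi-regular hexagon (Theorem 7.1 of \cite{Ciucu6}) for $\M(\Ob(G''))$, where $G''$ is the dual of $\mathcal{H}_{w-h}(\textbf{1}^{h};\textbf{1}^{h})$; the parity-dependent products of two $P$'s in (a)--(c) come from that citation, not from a cut along the axis.
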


Note that if the width of the quasi-hexagon $\mathcal{H}_{a}(\textbf{d};\textbf{d})$ is less than the heights $h_1=h_2$, then it has no tiling (see Theorem 2.1 in \cite{Tri1}; it is easy to see that the width $w$ is equal to the value $a+m-n$ mentioned in this theorem).

It is worth noticing that Theorem \ref{hexsym} above generalizes Ciucu's Theorem 7.1 in \cite{Ciucu6}. The latter theorem in turn is a special case of Stanley's well-known enumeration of self-complementary plane partitions \cite[Eq. (3a)--(3c)]{Stanley2}.

\section{Preliminaries}\label{Background}

This section shares several preliminary results and definitions with the prequels   \cite{Tri1, Tri2} of the paper.  The first result not reported in \cite{Tri1,Tri2} is Ciucu's Lemma \ref{4cycle}.

A \textit{perfect matching} of a graph $G$ is a collection of edges such that each vertex of $G$ is adjacent to exactly one edge in the collection. The tilings of a region $\mathcal{R}$ can be naturally identified with the perfect matchings of its \textit{dual graph} (i.e., the graph whose vertices are the cells of $\mathcal{R}$, and whose edges connect two cells precisely when they share an edge). See Figures \ref{sampletiling} and \ref{sampletiling2} for the correspondence between tilings and perfect matchings. In the view of this, we denote the number of perfect matchings of a graph $G$ by $\M(G)$. More generally, if the edges of $G$ carry weights, $\M(G)$ denotes the sum of the weights of all perfect matchings of $G$, where the \emph{weight} of a perfect matching is the product of the weights of its constituent edges.

 %Similar to the case of weighted perfect matching, if we assign weights to all tiles of a region $R$, then the weight of a tiling is the product of weights of all its tiles; and $\M(R)$ is also defined to be the sum of weights of all tilings of $R$. The bijection between tilings and perfect matching in unweighted case still works here if we assign each edge of the dual graph of $R$ the weight of the corresponding tile in $R$.

A \textit{forced edge} of a graph $G$ is an edge that is contained in every perfect matching of $G$. Let $G$ be a weighted graph with weight function $\wt$ on its edges, and $G'$ is obtained from $G$ by removing forced edges $e_1,\dotsc,e_k$, as well as the vertices incident to these edges\footnote{For the sake of simplicity, from now on, whenever we remove some forced edges, we remove also the vertices incident to them.}. Then one clearly has
\begin{equation}
\M(G)=\M(G')\prod_{i=1}^k\wt(e_i).
\end{equation}

We present next three basic preliminary results stated below.

\begin{figure}\centering
\begin{picture}(0,0)%
\includegraphics{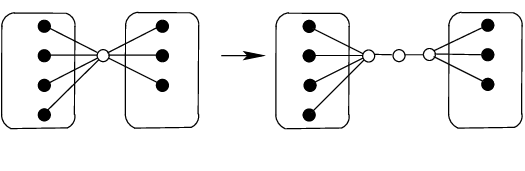}%
\end{picture}%
%
%  Created by WinFIG version 5.0
%  METADATA <version>1.0</version>
%
\setlength{\unitlength}{3947sp}%
\begingroup\makeatletter\ifx\SetFigFont\undefined%
\gdef\SetFigFont#1#2#3#4#5{%
  \reset@font\fontsize{#1}{#2pt}%
  \fontfamily{#3}\fontseries{#4}\fontshape{#5}%
  \selectfont}%
\fi\endgroup%
\begin{picture}(4188,1361)(593,-556)
%  METADATA <id>167</id>
\put(1336,591){\makebox(0,0)[lb]{\smash{{\SetFigFont{10}{12.0}{\familydefault}{\mddefault}{\updefault}{$v$}%
}}}}
%  METADATA <id>168</id>
\put(3549,621){\makebox(0,0)[lb]{\smash{{\SetFigFont{10}{12.0}{\familydefault}{\mddefault}{\updefault}{$v'$}%
}}}}
%  METADATA <id>169</id>
\put(3757, 89){\makebox(0,0)[lb]{\smash{{\SetFigFont{10}{12.0}{\familydefault}{\mddefault}{\updefault}{$x$}%
}}}}
%  METADATA <id>170</id>
\put(3999,621){\makebox(0,0)[lb]{\smash{{\SetFigFont{10}{12.0}{\familydefault}{\mddefault}{\updefault}{$v''$}%
}}}}
%  METADATA <id>171</id>
\put(820,-541){\makebox(0,0)[lb]{\smash{{\SetFigFont{10}{12.0}{\familydefault}{\mddefault}{\updefault}{$H$}%
}}}}
%  METADATA <id>172</id>
\put(1840,-535){\makebox(0,0)[lb]{\smash{{\SetFigFont{10}{12.0}{\familydefault}{\mddefault}{\updefault}{$K$}%
}}}}
%  METADATA <id>174</id>
\put(3031,-535){\makebox(0,0)[lb]{\smash{{\SetFigFont{10}{12.0}{\familydefault}{\mddefault}{\updefault}{$H$}%
}}}}
%  METADATA <id>176</id>
\put(4426,-484){\makebox(0,0)[lb]{\smash{{\SetFigFont{10}{12.0}{\familydefault}{\mddefault}{\updefault}{$K$}%
}}}}
\end{picture}%
\caption{Vertex splitting.}
\label{vertexsplitting}
\end{figure}

\begin{lem} [Vertex-Splitting Lemma;  Lemma 2.2 in \cite{Ciucu5} ]\label{VS}
 Let $G$ be a graph, $v$ be a vertex of it, and denote the set of neighbors of $v$ by $N(v)$.
  For an arbitrary disjoint union $N(v)=H\cup K$, let $G'$ be the graph obtained from $G\setminus v$ by including three new vertices $v'$, $v''$ and $x$ so that $N(v')=H\cup \{x\}$, $N(v'')=K\cup\{x\}$, and $N(x)=\{v',v''\}$ (see Figure \ref{vertexsplitting}). Then $\M(G)=\M(G')$.
\end{lem}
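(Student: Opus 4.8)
The plan is to exhibit a weight-preserving bijection between the perfect matchings of $G$ and those of $G'$, organized according to how the vertex $v$ (in $G$) and the new vertex $x$ (in $G'$) are matched. The key structural observation is that $x$ has degree $2$ in $G'$, with $N(x)=\{v',v''\}$, so in every perfect matching of $G'$ the vertex $x$ is matched either to $v'$ or to $v''$. If $x$ is matched to $v''$, then $v'$ must be matched to one of its remaining neighbors, all of which lie in $H$; symmetrically, if $x$ is matched to $v'$, then $v''$ is matched to a vertex of $K$. Thus each matching of $G'$ matches exactly one of $v',v''$ into $N(v)=H\cup K$ and covers the other through its edge to $x$.

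To make the count weighted, I would first equip $G'$ with weights: assign weight $1$ to the two edges $xv'$ and $xv''$, give each edge $v'u$ (with $u\in H$) and each edge $v''u$ (with $u\in K$) the same weight as the edge $vu$ in $G$, and let all edges inherited from $G\setminus v$ keep their original weights. Given a perfect matching $M$ of $G$, the vertex $v$ is matched to a unique neighbor $u\in N(v)$. If $u\in H$, I map $M$ to the matching of $G'$ that replaces the edge $vu$ by $v'u$, adds the edge $xv''$, and retains all other edges of $M$ unchanged; if $u\in K$, I replace $vu$ by $v''u$ and add $xv'$ instead. In either case one checks directly that every vertex of $G'$ is covered exactly once, so the image is a genuine perfect matching, and its weight equals that of $M$, since the replaced edge carries the same weight and the new $x$-edge has weight $1$.

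The inverse map is read off from the structural observation above: given a perfect matching $M'$ of $G'$, delete the weight-$1$ edge incident to $x$, and replace whichever of $v'u$ or $v''u$ remains by the edge $vu$, keeping all other edges. This is visibly a two-sided inverse of the map in the previous paragraph, and it is weight-preserving by the same bookkeeping. Summing the weights over all matchings on each side then yields $\M(G)=\M(G')$.

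I do not expect a serious obstacle: the whole argument is a clean case analysis forced by $\deg(x)=2$, and the only point that requires care is the weight assignment on the new edges, which is dictated by the requirement that the $x$-edges contribute trivially while the split edges reproduce the original weights. The disjointness of $H$ and $K$ guarantees that the two cases ($u\in H$ versus $u\in K$) are mutually exclusive and exhaustive, so the correspondence is well defined with no double counting.
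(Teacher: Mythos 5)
Your proof is correct. Note that the paper itself states this lemma without proof, quoting it as Lemma 2.2 of \cite{Ciucu5}; your argument --- using that $\deg(x)=2$ forces every perfect matching of $G'$ to match $x$ to exactly one of $v',v''$, and transferring the edge at $v$ to $v'$ or $v''$ according to whether $v$'s partner lies in $H$ or $K$ --- is precisely the standard bijection by which this lemma is proved, and your weight bookkeeping (weight $1$ on the two new $x$-edges, inherited weights on the split edges) correctly covers the weighted interpretation of $\M$ used throughout the paper.
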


\begin{lem}[Star Lemma; Lemma 3.2 in  \cite{Tri1} ]\label{star}
Let $G$ be a weighted graph, and let $v$ be a vertex of~$G$. Let $G'$ be the graph obtained from $G$ by multiplying the weights of all edges that are adjacent to $v$ by a positive constant $t$. Then $\M(G')=t\M(G)$.
\end{lem}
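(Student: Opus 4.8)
The plan is to reduce the statement to the single combinatorial fact that every perfect matching of $G$ meets the vertex $v$ in exactly one edge. Because a perfect matching covers each vertex by precisely one of its edges, and the edges incident to $v$ are exactly the ``star'' at $v$ whose weights are being rescaled, this observation is the entire content of the lemma; everything else is a weight computation.

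First I would note that reweighting changes neither the vertex set nor the edge set, so $G$ and $G'$ have literally the same collection of perfect matchings (viewed as sets of edges). This lets me index the defining sums $\M(G)=\sum_M\wt_G(M)$ and $\M(G')=\sum_M\wt_{G'}(M)$ by one and the same index set. Then, for a fixed perfect matching $M$, I would isolate the unique edge $e_M\in M$ incident to $v$. In passing from $G$ to $G'$ only the weights of edges at $v$ are multiplied by $t$, and $M$ contains exactly one of these, namely $e_M$; all other edges of $M$ retain their weights. Since the weight of a matching is the product of its constituent edge weights, this yields $\wt_{G'}(M)=t\,\wt_G(M)$ for every $M$.

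Summing this identity over all perfect matchings gives
\begin{equation}
\M(G')=\sum_{M}\wt_{G'}(M)=t\sum_{M}\wt_G(M)=t\,\M(G),
\end{equation}
which is the assertion; if $G$ admits no perfect matching, both sides are the empty sum and the identity reads $0=t\cdot 0$. There is no genuine obstacle here: the only step requiring care is the justification that each matching uses exactly one edge of the star at $v$, which is immediate from the definition of a perfect matching, together with the remark that reweighting preserves the set of matchings. I would also observe that the positivity of $t$ is not actually used in the computation — the identity holds for any scalar — and serves only to keep $G'$ within the class of positively weighted graphs in which $\M(\cdot)$ is being considered.
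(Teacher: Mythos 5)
Your proof is correct and is exactly the standard argument: the paper itself states this lemma without proof (citing Lemma 3.2 of \cite{Tri1}), and the proof given there is the same observation that every perfect matching contains precisely one edge incident to $v$, so each matching's weight --- and hence the sum $\M(G)$ --- is scaled by $t$. Your added remarks (the empty-sum case and the fact that $t>0$ is not needed for the identity) are accurate but inessential.
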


Part (a) of the following result is a generalization due to Propp of the ``urban renewal" trick first observed by Kuperberg. Parts (b) and (c) are due to Ciucu (see Lemma 2.6 in [5]).

\begin{figure}\centering
\begin{picture}(0,0)%
\includegraphics{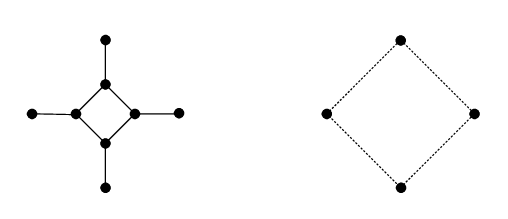}%
\end{picture}%
\setlength{\unitlength}{3947sp}%
\begingroup\makeatletter\ifx\SetFigFont\undefined%
\gdef\SetFigFont#1#2#3#4#5{%
  \reset@font\fontsize{#1}{#2pt}%
  \fontfamily{#3}\fontseries{#4}\fontshape{#5}%
  \selectfont}%
\fi\endgroup%
\begin{picture}(4054,1735)(340,-948)
%  METADATA <id>44</id>
\put(355,-156){\makebox(0,0)[lb]{\smash{{\SetFigFont{10}{12.0}{\familydefault}{\mddefault}{\updefault}{$A$}%
}}}}
%  METADATA <id>45</id>
\put(1064,-933){\makebox(0,0)[lb]{\smash{{\SetFigFont{10}{12.0}{\familydefault}{\mddefault}{\updefault}{$B$}%
}}}}
%  METADATA <id>46</id>
\put(1891,-106){\makebox(0,0)[lb]{\smash{{\SetFigFont{10}{12.0}{\familydefault}{\mddefault}{\updefault}{$C$}%
}}}}
%  METADATA <id>47</id>
\put(1182,603){\makebox(0,0)[lb]{\smash{{\SetFigFont{10}{12.0}{\familydefault}{\mddefault}{\updefault}{$D$}%
}}}}
%  METADATA <id>48</id>
\put(2717,-189){\makebox(0,0)[lb]{\smash{{\SetFigFont{10}{12.0}{\familydefault}{\mddefault}{\updefault}{$A$}%
}}}}
%  METADATA <id>49</id>
\put(3426,-933){\makebox(0,0)[lb]{\smash{{\SetFigFont{10}{12.0}{\familydefault}{\mddefault}{\updefault}{$B$}%
}}}}
%  METADATA <id>50</id>
\put(4253,-106){\makebox(0,0)[lb]{\smash{{\SetFigFont{10}{12.0}{\familydefault}{\mddefault}{\updefault}{$C$}%
}}}}
%  METADATA <id>51</id>
\put(3426,603){\makebox(0,0)[lb]{\smash{{\SetFigFont{10}{12.0}{\familydefault}{\mddefault}{\updefault}{$D$}%
}}}}
%  METADATA <id>52</id>
\put(904,-382){\makebox(0,0)[lb]{\smash{{\SetFigFont{10}{12.0}{\familydefault}{\mddefault}{\updefault}{$x$}%
}}}}
%  METADATA <id>53</id>
\put(1396,-388){\makebox(0,0)[lb]{\smash{{\SetFigFont{10}{12.0}{\familydefault}{\mddefault}{\updefault}{$y$}%
}}}}
%  METADATA <id>54</id>
\put(1418,130){\makebox(0,0)[lb]{\smash{{\SetFigFont{10}{12.0}{\familydefault}{\mddefault}{\updefault}{$z$}%
}}}}
%  METADATA <id>55</id>
\put(946,130){\makebox(0,0)[lb]{\smash{{\SetFigFont{10}{12.0}{\familydefault}{\mddefault}{\updefault}{$t$}%
}}}}
%  METADATA <id>86</id>
\put(2968,284){\makebox(0,0)[lb]{\smash{{\SetFigFont{10}{12.0}{\familydefault}{\mddefault}{\updefault}{$y/\Delta$}%
}}}}
%  METADATA <id>87</id>
\put(3934,311){\makebox(0,0)[lb]{\smash{{\SetFigFont{10}{12.0}{\familydefault}{\mddefault}{\updefault}{$x/\Delta$}%
}}}}
%  METADATA <id>88</id>
\put(3964,-544){\makebox(0,0)[lb]{\smash{{\SetFigFont{10}{12.0}{\familydefault}{\mddefault}{\updefault}{$t/\Delta$}%
}}}}
%  METADATA <id>89</id>
\put(2965,-526){\makebox(0,0)[lb]{\smash{{\SetFigFont{10}{12.0}{\familydefault}{\mddefault}{\updefault}{$z/\Delta$}%
}}}}
%  METADATA <id>116</id>
\put(2197,-817){\makebox(0,0)[lb]{\smash{{\SetFigFont{10}{12.0}{\familydefault}{\mddefault}{\updefault}{$\Delta= xz+yt$}%
}}}}
\end{picture}%
\caption{Urban renewal.}
\label{spider1}
\end{figure}
\begin{figure}\centering
\resizebox{!}{3.7cm}{
\begin{picture}(0,0)%
\includegraphics{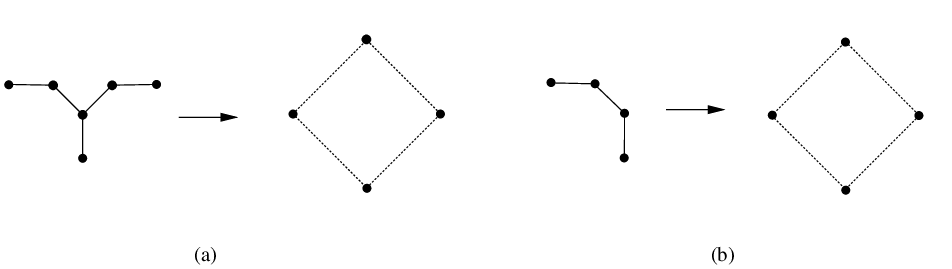}%
\end{picture}%
\setlength{\unitlength}{3947sp}%
\begingroup\makeatletter\ifx\SetFigFont\undefined%
\gdef\SetFigFont#1#2#3#4#5{%
  \reset@font\fontsize{#1}{#2pt}%
  \fontfamily{#3}\fontseries{#4}\fontshape{#5}%
  \selectfont}%
\fi\endgroup%
\begin{picture}(7605,2136)(222,-1352)
%  METADATA <id>16</id>
\put(237,-106){\makebox(0,0)[lb]{\smash{{\SetFigFont{10}{12.0}{\familydefault}{\mddefault}{\updefault}{$A$}%
}}}}
%  METADATA <id>17</id>
\put(828,-697){\makebox(0,0)[lb]{\smash{{\SetFigFont{10}{12.0}{\familydefault}{\mddefault}{\updefault}{$B$}%
}}}}
%  METADATA <id>18</id>
\put(1592,  0){\makebox(0,0)[lb]{\smash{{\SetFigFont{10}{12.0}{\familydefault}{\mddefault}{\updefault}{$C$}%
}}}}
%  METADATA <id>19</id>
\put(519,-249){\makebox(0,0)[lb]{\smash{{\SetFigFont{10}{12.0}{\familydefault}{\mddefault}{\updefault}{$x$}%
}}}}
%  METADATA <id>20</id>
\put(1164,-264){\makebox(0,0)[lb]{\smash{{\SetFigFont{10}{12.0}{\familydefault}{\mddefault}{\updefault}{$y$}%
}}}}
%  METADATA <id>21</id>
\put(2566,426){\makebox(0,0)[lb]{\smash{{\SetFigFont{10}{12.0}{\familydefault}{\mddefault}{\updefault}{$y/2$}%
}}}}
%  METADATA <id>22</id>
\put(3616,396){\makebox(0,0)[lb]{\smash{{\SetFigFont{10}{12.0}{\familydefault}{\mddefault}{\updefault}{$x/2$}%
}}}}
%  METADATA <id>23</id>
\put(2244,-714){\makebox(0,0)[lb]{\smash{{\SetFigFont{10}{12.0}{\rmdefault}{\mddefault}{\updefault}{$1/(2x)$}%
}}}}
%  METADATA <id>24</id>
\put(3646,-744){\makebox(0,0)[lb]{\smash{{\SetFigFont{10}{12.0}{\rmdefault}{\mddefault}{\updefault}{$1/(2y)$}%
}}}}
%  METADATA <id>25</id>
\put(2363,-129){\makebox(0,0)[lb]{\smash{{\SetFigFont{10}{12.0}{\familydefault}{\mddefault}{\updefault}{$A$}%
}}}}
%  METADATA <id>29</id>
\put(3131,600){\makebox(0,0)[lb]{\smash{{\SetFigFont{10}{12.0}{\familydefault}{\mddefault}{\updefault}{$D$}%
}}}}
%  METADATA <id>30</id>
\put(3830,-174){\makebox(0,0)[lb]{\smash{{\SetFigFont{10}{12.0}{\rmdefault}{\mddefault}{\updefault}{$C$}%
}}}}
%  METADATA <id>31</id>
\put(3119,-933){\makebox(0,0)[lb]{\smash{{\SetFigFont{10}{12.0}{\rmdefault}{\mddefault}{\updefault}{$B$}%
}}}}
%  METADATA <id>42</id>
\put(4489,249){\makebox(0,0)[lb]{\smash{{\SetFigFont{10}{12.0}{\rmdefault}{\mddefault}{\updefault}{$A$}%
}}}}
%  METADATA <id>43</id>
\put(5198,-814){\makebox(0,0)[lb]{\smash{{\SetFigFont{10}{12.0}{\rmdefault}{\mddefault}{\updefault}{$B$}%
}}}}
%  METADATA <id>44</id>
\put(6150,-115){\makebox(0,0)[lb]{\smash{{\SetFigFont{10}{12.0}{\rmdefault}{\mddefault}{\updefault}{$A$}%
}}}}
%  METADATA <id>45</id>
\put(6977,-942){\makebox(0,0)[lb]{\smash{{\SetFigFont{10}{12.0}{\rmdefault}{\mddefault}{\updefault}{$B$}%
}}}}
%  METADATA <id>46</id>
\put(7686,-115){\makebox(0,0)[lb]{\smash{{\SetFigFont{10}{12.0}{\rmdefault}{\mddefault}{\updefault}{$C$}%
}}}}
%  METADATA <id>47</id>
\put(6977,594){\makebox(0,0)[lb]{\smash{{\SetFigFont{10}{12.0}{\rmdefault}{\mddefault}{\updefault}{$D$}%
}}}}
%  METADATA <id>48</id>
\put(5251,179){\makebox(0,0)[lb]{\smash{{\SetFigFont{10}{12.0}{\rmdefault}{\mddefault}{\updefault}{$x$}%
}}}}
%  METADATA <id>49</id>
\put(6369,441){\makebox(0,0)[lb]{\smash{{\SetFigFont{10}{12.0}{\rmdefault}{\mddefault}{\updefault}{$\frac12$}%
}}}}
%  METADATA <id>50</id>
\put(7501,-781){\makebox(0,0)[lb]{\smash{{\SetFigFont{10}{12.0}{\rmdefault}{\mddefault}{\updefault}{$\frac12$}%
}}}}
%  METADATA <id>51</id>
\put(6046,-736){\makebox(0,0)[lb]{\smash{{\SetFigFont{10}{12.0}{\rmdefault}{\mddefault}{\updefault}{$1/(2x)$}%
}}}}
%  METADATA <id>52</id>
\put(7426,374){\makebox(0,0)[lb]{\smash{{\SetFigFont{10}{12.0}{\rmdefault}{\mddefault}{\updefault}{$x/2$}%
}}}}
\end{picture}}
\caption{Two variants of the urban renewal.}
\label{spider2}
\end{figure}

\begin{lem} [Spider Lemma]\label{spider}
(a) Let $G$ be a weighted graph containing the subgraph $K$ shown on the left in Figure \ref{spider1} (the labels indicate weights, unlabeled edges have weight 1). Suppose in addition that the four inner black vertices in the subgraph $K$, different from $A,B,C,D$, have no neighbors outside $K$. Let $G'$ be the graph obtained from $G$ by replacing $K$ by the graph $\overline{K}$ shown on right in Figure \ref{spider}, where the dashed lines indicate new edges, weighted as shown. Then $\M(G)=(xz+yt)\M(G')$.

(b) Consider the above local replacement operation when $K$ and $\overline{K}$ are graphs shown in Figure \ref{spider2}(a) with the indicated weights (in particular, $K'$ has a new vertex $D$, that is incident only to $A$ and $C$). Then $\M(G)=2\M(G')$.

(c) The statement of part (b) is also true when $K$ and $\overline{K}$ are the graphs indicated in Figure \ref{spider2}(b). (In this case $G'$ has two new vertices $C$ and $D$, that are adjacent only to one another and to $B$ and $A$, respectively).
\end{lem}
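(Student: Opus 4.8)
The plan is to prove all three parts by the same boundary-condition bookkeeping, treating the local gadget $K$ (or $\overline{K}$) as a black box that interacts with the rest of the graph only through its outer vertices $A,B,C,D$. Write $G_{\mathrm{out}}$ for the graph obtained from $G$ by deleting the interior of $K$, so that $G$ and $G'$ share the identical outside $G_{\mathrm{out}}$ and differ only in which gadget is glued onto $A,B,C,D$. For a subset $S\subseteq\{A,B,C,D\}$, let $\mu(S)$ denote the total weight of those matchings of $G_{\mathrm{out}}$ in which exactly the vertices of $\{A,B,C,D\}\setminus S$ are matched by edges lying outside the gadget, so that the vertices of $S$ are left to be covered inside. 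Since every perfect matching of $G$ splits uniquely into an outside part and an inside part along the cut $A,B,C,D$, we get
\begin{equation}
\M(G)=\sum_{S\subseteq\{A,B,C,D\}}\mu(S)\,W_K(S),\qquad \M(G')=\sum_{S\subseteq\{A,B,C,D\}}\mu(S)\,W_{\overline{K}}(S),
\end{equation}
where $W_K(S)$ and $W_{\overline{K}}(S)$ are the weighted numbers of matchings of the respective gadget covering exactly $S$ together with all of its interior vertices. Because $\mu(S)$ is common to both expansions, it suffices to prove the purely local identity $W_K(S)=\Delta\,W_{\overline{K}}(S)$ for every $S$, with $\Delta=xz+yt$ in part (a) and $\Delta=2$ in parts (b) and (c).

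First I would cut down the number of cases by a parity observation. In part (a) the interior of $K$ is a $4$-cycle, so a matching covering $S$ together with those four interior vertices exists only when $|S|$ is even; the same parity constraint holds for $\overline{K}$, whose only edges form the $4$-cycle on $A,B,C,D$. Hence $W_K(S)=W_{\overline{K}}(S)=0$ whenever $|S|$ is odd, and only the eight even subsets (sizes $0,2,4$) need to be examined.

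Next I would carry out the finite check for these subsets. Writing the interior $4$-cycle of $K$ as $1$--$2$--$3$--$4$ with spokes $A1,B2,C3,D4$ and cycle-edge weights $x,y,z,t$, one reads off directly: for $S=\emptyset$ both interior matchings survive, giving $W_K=xz+yt=\Delta$ while $W_{\overline{K}}=1$; for $S=\{A,B,C,D\}$ all spokes are forced, giving $W_K=1$ while the two matchings of the reciprocally weighted $4$-cycle give $W_{\overline{K}}=(xz+yt)/\Delta^{2}=1/\Delta$; and for each of the four adjacent pairs $S$ exactly one cycle edge is used on each side, producing matching weights whose ratio is again $\Delta$, while the two opposite pairs give $0$ on both sides. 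Thus $W_K(S)=\Delta\,W_{\overline{K}}(S)$ in every case, and $\M(G)=(xz+yt)\M(G')$, which is part (a). Parts (b) and (c) follow by the same expansion: the gadget now carries one or two interior vertices, and the analogous shorter table of boundary weights shows that each surviving $W_K(S)$ equals exactly twice the corresponding $W_{\overline{K}}(S)$.

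The step I expect to be the main obstacle is the bookkeeping in the finite table rather than any conceptual difficulty: one must track the spoke weights (all equal to $1$) together with the cycle weights on the $K$ side and the reciprocal weights $x/\Delta,y/\Delta,z/\Delta,t/\Delta$ on the $\overline{K}$ side, and verify that the single global factor $\Delta$ emerges uniformly across all admissible boundary conditions, including the degenerate rows where the leftover interior vertices are non-adjacent and both sides vanish. Once the rows $S=\emptyset$ and $S=\{A,B,C,D\}$ are seen to realize the two ``halves'' $\Delta$ and $\Delta^{-1}\!\cdot\!\Delta$ of the identity, the remaining rows reduce to a routine symmetry check.
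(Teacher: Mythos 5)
Your proof is correct. One thing to be aware of: the paper itself gives no proof of this lemma --- it is quoted as a known preliminary result, with part (a) attributed to Kuperberg/Propp (``urban renewal'') and parts (b), (c) to Ciucu (Lemma 2.6 of his lecture notes), so there is no internal argument to compare against. What you have written is essentially the standard proof from that cited literature: split each perfect matching of $G$ along the cut $\{A,B,C,D\}$ into an outside part (weight $\mu(S)$) and an inside part (weight $W_K(S)$ or $W_{\overline{K}}(S)$), then verify the purely local identity $W_K(S)=\Delta\,W_{\overline{K}}(S)$ over the finitely many admissible boundary sets $S$. Your table for part (a) is right in every row, including the two opposite-pair rows where both sides vanish and the full set $S=\{A,B,C,D\}$, where the reciprocal weights give $W_{\overline{K}}=(xz+yt)/\Delta^2=1/\Delta$; the key structural point, which you use implicitly, is that the edge of $\overline{K}$ joining two adjacent outer vertices carries $1/\Delta$ times the weight of the \emph{opposite} cycle edge of $K$, which is exactly what makes every adjacent-pair row scale by the same $\Delta$. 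For parts (b) and (c) your remark that the replacement gadgets carry one and two interior vertices respectively (the new vertices $D$, resp.\ $C$ and $D$, must always be matched internally) is the right bookkeeping, and the analogous short tables do yield the uniform factor $2$; if you write these out, note that in part (b) the original gadget $K$ has three interior vertices (a path hanging off $A$, $B$, $C$), so the parity constraint there selects the odd subsets $S$ rather than the even ones. The net effect of your argument is to make the paper self-contained at the cost of about a page of routine case-checking, which is a reasonable trade; it is also slightly more general than what the paper needs, since your decomposition argument never uses planarity of $G$, only that the interior vertices of the gadget have no neighbors outside it.
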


We quote the following useful result of Ciucu \cite{Ciucu1}.
\begin{figure}\centering
\begin{picture}(0,0)%
\includegraphics{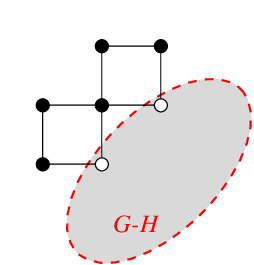}%
\end{picture}%
%
%  Created by WinFIG version 6.2 
%  METADATA <version>1.0</version> 
%
\setlength{\unitlength}{3947sp}%
\begingroup\makeatletter\ifx\SetFigFont\undefined%
\gdef\SetFigFont#1#2#3#4#5{%
  \reset@font\fontsize{#1}{#2pt}%
  \fontfamily{#3}\fontseries{#4}\fontshape{#5}%
  \selectfont}%
\fi\endgroup%
\begin{picture}(2025,2102)(841,-1384)
%  METADATA <id>30</id> 
\put(1489,-40){\makebox(0,0)[lb]{\smash{{\SetFigFont{12}{14.4}{\rmdefault}{\mddefault}{\itdefault}{$a$}%
}}}}
%  METADATA <id>23</id> 
\put(1606,482){\makebox(0,0)[lb]{\smash{{\SetFigFont{12}{14.4}{\rmdefault}{\mddefault}{\itdefault}{$b_1$}%
}}}}
%  METADATA <id>25</id> 
\put(2104,494){\makebox(0,0)[lb]{\smash{{\SetFigFont{12}{14.4}{\rmdefault}{\mddefault}{\itdefault}{$b_2$}%
}}}}
%  METADATA <id>26</id> 
\put(2155,-277){\makebox(0,0)[lb]{\smash{{\SetFigFont{12}{14.4}{\rmdefault}{\mddefault}{\itdefault}{$b_3$}%
}}}}
%  METADATA <id>27</id> 
\put(1606,-775){\makebox(0,0)[lb]{\smash{{\SetFigFont{12}{14.4}{\rmdefault}{\mddefault}{\itdefault}{$c_3$}%
}}}}
%  METADATA <id>28</id> 
\put(856,-709){\makebox(0,0)[lb]{\smash{{\SetFigFont{12}{14.4}{\rmdefault}{\mddefault}{\itdefault}{$c_2$}%
}}}}
%  METADATA <id>29</id> 
\put(859,-55){\makebox(0,0)[lb]{\smash{{\SetFigFont{12}{14.4}{\rmdefault}{\mddefault}{\itdefault}{$c_1$}%
}}}}
\end{picture}%
\caption{Illustrating Lemma \ref{4cycle}.}
\label{4cyclelemma}
\end{figure}

\begin{lem}[Lemma 4.2 in \cite{Ciucu1} ]\label{4cycle}
Let $G$ be a weighted graph having a $7$-vertex induced subgraph $H$ consisting of two $4$-cycles that share a vertex. Let $a$, $b_1$, $b_2$, $b_3$ and $a$, $c_1$, $c_2$, $c_3$ be the vertices of the 4-cycles (listed in cyclic order) and suppose $b_3$ and $c_3$ are only the vertices of $H$ with the neighbors outside $H$ (see Figure \ref{4cycle}). Assume that the product of weights of opposite edges in each $4$-cycle of $H$ is a constant $d$, that is
\begin{align}
wt(b_1,b_2)wt(a_,b_3)=wt(b_2,b_3)wt(a,b_1)=wt(c_1,c_2)wt(a,c_3)=wt(c_2,c_3)wt(a,c_1)=d.
\end{align}
Here we use the notation $wt(a,b)$ for the weight of the edge connecting the two vertices $a$ and $b$.
Let $G'$ be the subgraph of $G$ obtained by deleting $b_1$, $b_2$, $c_1$ and $c_2$, weighted by restriction. Then
\[\M(G)=2wt(b_1,b_2)wt(c_1,c_2) \M(G').\]
\end{lem}

\begin{figure}\centering
\begin{picture}(0,0)%
\includegraphics{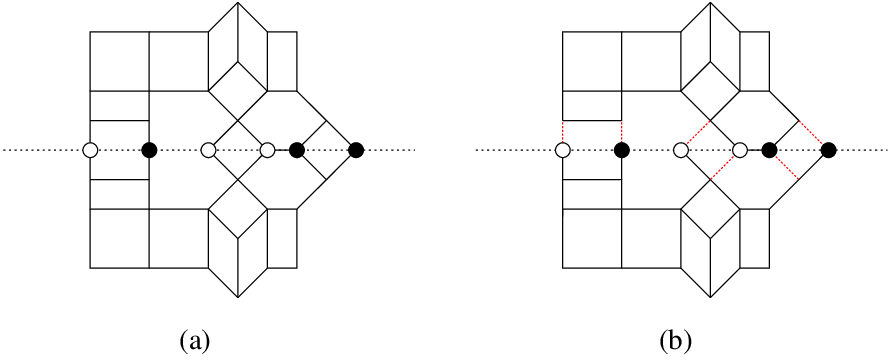}%
\end{picture}%
%
%  Created by WinFIG version 6.2
%  METADATA <version>1.0</version>
%
\setlength{\unitlength}{3947sp}%
\begingroup\makeatletter\ifx\SetFigFont\undefined%
\gdef\SetFigFont#1#2#3#4#5{%
  \reset@font\fontsize{#1}{#2pt}%
  \fontfamily{#3}\fontseries{#4}\fontshape{#5}%
  \selectfont}%
\fi\endgroup%
\begin{picture}(7111,2872)(225,-2257)
%  METADATA <id>190</id>
\put(4135,-401){\makebox(0,0)[lb]{\smash{{\SetFigFont{14}{16.8}{\rmdefault}{\mddefault}{\updefault}{$\ell$}%
}}}}
%  METADATA <id>142</id>
\put(6379,-756){\makebox(0,0)[lb]{\smash{{\SetFigFont{10}{12.0}{\rmdefault}{\mddefault}{\updefault}{$a_3$}%
}}}}
%  METADATA <id>143</id>
\put(6911,-519){\makebox(0,0)[lb]{\smash{{\SetFigFont{10}{12.0}{\rmdefault}{\mddefault}{\updefault}{$b_3$}%
}}}}
%  METADATA <id>185</id>
\put(4253,189){\makebox(0,0)[lb]{\smash{{\SetFigFont{14}{16.8}{\rmdefault}{\mddefault}{\updefault}{$G^+$}%
}}}}
%  METADATA <id>186</id>
\put(4312,-1287){\makebox(0,0)[lb]{\smash{{\SetFigFont{14}{16.8}{\rmdefault}{\mddefault}{\updefault}{$G^-$}%
}}}}
%  METADATA <id>187</id>
\put(6143,-492){\makebox(0,0)[lb]{\smash{{\SetFigFont{10}{12.0}{\rmdefault}{\mddefault}{\updefault}{$\frac{1}{2}$}%
}}}}
%  METADATA <id>188</id>
\put(532,-933){\makebox(0,0)[lb]{\smash{{\SetFigFont{14}{16.8}{\rmdefault}{\mddefault}{\updefault}{$G$}%
}}}}
%  METADATA <id>189</id>
\put(296,-460){\makebox(0,0)[lb]{\smash{{\SetFigFont{14}{16.8}{\rmdefault}{\mddefault}{\updefault}{$\ell$}%
}}}}
%  METADATA <id>42</id>
\put(710,-460){\makebox(0,0)[lb]{\smash{{\SetFigFont{10}{12.0}{\rmdefault}{\mddefault}{\updefault}{$a_1$}%
}}}}
%  METADATA <id>43</id>
\put(1477,-519){\makebox(0,0)[lb]{\smash{{\SetFigFont{10}{12.0}{\rmdefault}{\mddefault}{\updefault}{$b_1$}%
}}}}
%  METADATA <id>44</id>
\put(1714,-755){\makebox(0,0)[lb]{\smash{{\SetFigFont{10}{12.0}{\rmdefault}{\mddefault}{\updefault}{$a_2$}%
}}}}
%  METADATA <id>45</id>
\put(2304,-492){\makebox(0,0)[lb]{\smash{{\SetFigFont{10}{12.0}{\rmdefault}{\mddefault}{\updefault}{$b_2$}%
}}}}
%  METADATA <id>46</id>
\put(2540,-814){\makebox(0,0)[lb]{\smash{{\SetFigFont{10}{12.0}{\rmdefault}{\mddefault}{\updefault}{$a_3$}%
}}}}
%  METADATA <id>47</id>
\put(3131,-519){\makebox(0,0)[lb]{\smash{{\SetFigFont{10}{12.0}{\rmdefault}{\mddefault}{\updefault}{$b_3$}%
}}}}
%  METADATA <id>138</id>
\put(4490,-460){\makebox(0,0)[lb]{\smash{{\SetFigFont{10}{12.0}{\rmdefault}{\mddefault}{\updefault}{$a_1$}%
}}}}
%  METADATA <id>139</id>
\put(5257,-519){\makebox(0,0)[lb]{\smash{{\SetFigFont{10}{12.0}{\rmdefault}{\mddefault}{\updefault}{$b_1$}%
}}}}
%  METADATA <id>140</id>
\put(5494,-755){\makebox(0,0)[lb]{\smash{{\SetFigFont{10}{12.0}{\rmdefault}{\mddefault}{\updefault}{$a_2$}%
}}}}
%  METADATA <id>141</id>
\put(5848,-610){\makebox(0,0)[lb]{\smash{{\SetFigFont{10}{12.0}{\rmdefault}{\mddefault}{\updefault}{$b_2$}%
}}}}
\end{picture}
\caption{(a) A graph $G$ with a horizontal symmetric axis; (b) the resulting graph after the cutting procedure.}
\label{verticalfactor}
\end{figure}

Next, we present a powerful tool in enumeration of perfect matchings of reflectively symmetric graphs. This was first introduced by Ciucu \cite{Ciucu}.

 Let $G$ be a weighted planar bipartite graph that is symmetric about a horizontal line $\ell$. Assume that the set of vertices lying on $\ell$ is a cut set of $G$ (i.e., the removal of these vertices disconnects $G$). One readily sees that the number of vertices of $G$ on $\ell$ must be even if $G$ has perfect matchings, let $\eta(G)$ be half of this number. Let $a_1,b_1,a_2,b_2,\dots,a_{\eta(G)},b_{\eta(G)}$ be the vertices lying on $\ell$, as they occur from left to right. Color vertices of $G$ by black or white so that any two adjacent vertices have opposite colors. Without loss of generality, we assume that $a_1$ is always colored white. Delete all edges above $\ell$ at all white $a_i$'s and black $b_j$'s, and delete all edges below $\ell$ at all black $a_i$'s and white $b_j$'s. Reduce the weight of each edge lying on $\ell$ by half; leave all other weights unchanged. Since the set of vertices of $G$ on $\ell$ is a cut set, the graph obtained from the above cutting procedure has two disconnected parts, one above $\ell$ and one below $\ell$, denoted by $G^+$ and $G^-$, respectively (see Figure \ref{verticalfactor}).

\begin{thm}[Ciucu's Factorization Theorem \cite{Ciucu}]\label{factorthm}
Let $G$ be a bipartite weighted symmetric graph separated by its symmetry axis. Then
\begin{equation}\label{factoreq}
\M(G)=2^{\eta(G)}\M(G^+)\M(G^-).
\end{equation}
\end{thm}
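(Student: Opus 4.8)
The plan is to convert the weighted count $\M(G)$ into a single determinant and then use the reflection $\sigma$ about $\ell$ to break that determinant into one factor for each side of the axis. First I would record $\M(G)$ algebraically: fix the black/white coloring (equality of the two color classes is forced by the existence of perfect matchings, so the white-by-black biadjacency matrix $K$, with $K_{w,b}=\wt(wb)$ and $0$ on non-edges, is square), and using planarity install Kasteleyn signs on the entries so that $\M(G)=|\det K|$. Because $\sigma$ fixes the axis vertices, it cannot reverse colors, so it is a color-preserving automorphism; ordering the white vertices as $W^{+},W^{0},W^{-}$ (above, on, below $\ell$) and the black vertices as $B^{+},B^{0},B^{-}$, the cut-set hypothesis forbids edges between strictly-upper and strictly-lower vertices, and $K$ takes the block shape
\[
K=\begin{pmatrix} P & Q & 0\\ R & S & R\\ 0 & Q & P\end{pmatrix},
\]
the repeated blocks encoding the identifications $W^{-}=\sigma W^{+}$ and $B^{-}=\sigma B^{+}$.

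Next I would block-diagonalize $K$ with respect to a reflection operator adapted to the signing. Writing $\sigma$ together with the Kasteleyn signs as a signed involution on the white coordinates and on the black coordinates, I would pass to its $\pm1$-eigenspaces by the orthogonal change of basis $\tfrac{1}{\sqrt2}\bigl(I\pm\sigma\bigr)$ on the paired off-axis directions, distributing the $2\eta(G)$ axis directions between the two eigenspaces. Since $K$ intertwines the white and black involutions, this simultaneous change of basis puts $K$ into the form $K_{+}\oplus K_{-}$, whence $\det K=\det K_{+}\cdot\det K_{-}$. The halving of the weights of the edges lying on $\ell$ prescribed in the cutting procedure is exactly the effect of the symmetrizing transform on the $\sigma$-invariant axis edges, so that $K_{+}$ and $K_{-}$ become the signed biadjacency matrices of the cut graphs $G^{+}$ and $G^{-}$ of Figure \ref{verticalfactor}, giving $|\det K_{+}|=\M(G^{+})$ and $|\det K_{-}|=\M(G^{-})$; the $\tfrac{1}{\sqrt2}$ normalizations attached to the $\eta(G)$ symmetrized directions collect into the prefactor $2^{\eta(G)}$, yielding \eqref{factoreq}.

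The delicate point, and the main obstacle, is the bookkeeping at the axis. One must choose the Kasteleyn signing so that the accompanying signed reflection is genuinely an involution whose $+1$- and $-1$-eigenspaces have sizes matching $G^{+}$ and $G^{-}$; this is precisely where the color-and-position rule in the cutting procedure (which of the $a_i$ and $b_j$ are sent up versus down) enters, and it is what keeps both diagonal blocks square even when the number of white axis vertices differs from the number of black ones. Pinning down the exact power of two, rather than merely a constant factor, is the heart of the argument. A reflection-free alternative that avoids these sign subtleties would instead split each axis vertex by the Vertex-Splitting Lemma (Lemma \ref{VS}) and rescale the resulting edges by the Star Lemma (Lemma \ref{star}), so that the halved axis weights make the weighted matching sum independent of the $2^{\eta(G)}$ ways of routing the axis vertices up or down; this reproduces the clean product $2^{\eta(G)}\M(G^{+})\M(G^{-})$.
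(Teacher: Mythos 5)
The paper does not prove Theorem \ref{factorthm} at all --- it is quoted from \cite{Ciucu}, where Ciucu's argument is combinatorial (an independence statement for the $2^{\eta(G)}$ ways of cutting at the axis vertices, his Lemma 2.1, followed by a counting argument) --- so your determinantal route is genuinely different from the source; unfortunately it has a real gap, one you yourself flag and never close. Carry out your symmetrization honestly: with rows ordered $W^{+},W^{0},W^{-}$ and columns $B^{+},B^{0},B^{-}$, conjugating your block matrix by the orthogonal maps $\tfrac{1}{\sqrt2}(I\pm\sigma)$ on the paired off-axis coordinates produces the blocks
\[
K_{+}=\begin{pmatrix} P & \sqrt2\,Q\\ \sqrt2\,R & S\end{pmatrix},\qquad K_{-}=P,
\]
that is, \emph{all} $2\eta(G)$ axis coordinates land in the $+$ eigenspace. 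This factors $|\det K|$ as (upper half together with the entire axis) times (upper half with the axis deleted), which is not $\M(G^{+})\M(G^{-})$: Ciucu's cut graphs each retain axis vertices according to the color-and-position rule. Worse, when the numbers $|W^{0}|$ and $|B^{0}|$ of white and black axis vertices differ --- which is exactly the situation the asymmetric cutting rule is designed to handle, and which occurs in the applications in this paper --- the block $K_{-}=P$ is not even square, so the naive eigenspace splitting fails outright. The ``signed involution adapted to the signing'' that would redistribute axis directions correctly is therefore not a refinement but a necessity, and you never construct it; nor do you show that a $\sigma$-symmetric Kasteleyn signing exists (a reflection does not automatically preserve one). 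Your own text concedes this is ``the main obstacle'' and ``the heart of the argument.'' Finally, the claimed origin of the prefactor is wrong as stated: the basis changes $\tfrac{1}{\sqrt2}(I\pm\sigma)$ are orthogonal and so preserve $|\det K|$ exactly; the $\sqrt2$'s appear \emph{inside} the blocks, as displayed above, and extracting $2^{\eta(G)}$ requires rescaling axis rows and columns and reconciling this with the halved axis-edge weights --- accounting you do not perform.

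Your closing ``reflection-free alternative'' points in the direction of the actual proof in \cite{Ciucu} (and of the device used in this paper's proof of Theorem \ref{hexsym}, where Ciucu's Lemma 2.1 is invoked): one shows that all $2^{\eta(G)}$ graphs obtained by cutting above or below each axis vertex have the same matching generating function, and that the particular color-compatible cut makes the count factor as $\M(G^{+})\M(G^{-})$, the factor $2^{\eta(G)}$ arising from summing over the equivalent cuts. But as written, your sentence \emph{assumes} this independence rather than proving it, and Lemma \ref{VS} and Lemma \ref{star} alone do not deliver it; the independence lemma is the substantive content, and without it neither your fallback nor your determinant scheme constitutes a proof.
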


Consider a $(2m+1)\times(2n+1)$ rectangular chessboard and suppose the corners are black. The \textit{Aztec rectangle graph} $AR_{m,n}$ is the graph whose vertices are the white unit squares and whose edges connect precisely those pairs of white unit squares that are diagonally adjacent (see Figure \ref{fourAR}(a) for $AR_{3,5}$). The \textit{odd Aztec rectangle graph} $OR_{m,n}$ is the graph whose vertices are the black unit squares whose edges connect precisely those pairs of black unit squares that are diagonally adjacent (see Figure \ref{fourAR}(b) for $OR_{3,5}$).  If one removes all the bottommost vertices in $AR_{m,n}$, the resulting graph is denoted by $AR_{m-\frac12,n}$, and called a \textit{baseless Aztec rectangle} (see Figure \ref{fourAR}(c) for $AR_{5/2,5}$). We also consider the graph $AR_{m,n-\frac12}$ that is obtained from the Aztec rectangle $AR_{m,n}$ by removing all its leftmost vertices (see Figure \ref{fourAR}(d) for $AR_{3,9/2}$).

It is worth noticing that when $n=m$, the Aztec rectangle graph $AR_{m,n}$ becomes the \emph{Aztec diamond graph} $AD_{n}$. Elkies, Kuperberg, Larsen and Propp \cite{Elkies1,Elkies2} showed that the number of perfect matchings of $AD_{n}$ is exactly $2^{n(n+1)/2}$. The Aztec rectangle  graph $AR_{m,n}$  does \emph{not} have perfect matchings in general, however, when certain vertices have been removed from one of its sides, the perfect matchings are enumerated by a simple product formula (see e.g. Proposition 2.1 in \cite{Cohn}).

\begin{figure}\centering
\includegraphics[width=8cm]{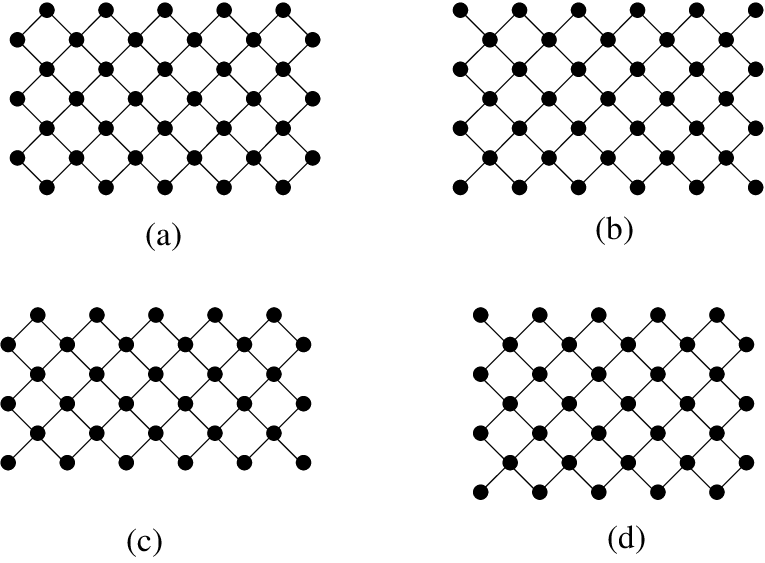}
\caption{Four types of Aztec rectangle graphs.}
\label{fourAR}
\end{figure}

\medskip

Next, we consider several variations of the Aztec rectangles\footnote{From now on we use the word ``\emph{Aztec rectangle(s)}" to mean ``\emph{Aztec rectangle graph(s)}".} as follows.

Label the vertices on the left side of the Aztec rectangle $AR_{m,n}$ from bottom up by $1,2,3,\dotsc,m$. Denote by $AR^{o}_{m,n}$ and $AR^{e}_{m,n}$ the graphs obtained from $AR_{m,n}$ by removing all odd-labeled and all even-labeled vertices, respectively (see Figures \ref{symT5}(b) and (d) for $AR^{o}_{6,5}$ and $AR^{o}_{5,5}$,  and Figures \ref{symT6}(b) and (d) for $AR^{e}_{5,5}$ and $AR^{e}_{6,5}$). We call $AR^{o}_{m,n}$ and $AR^{e}_{m,n}$ the \emph{odd-} and \emph{even-trimmed versions} of $AR_{m,n}$, respectively.

Applying a similar process, we obtain the odd- and even-trimmed versions of the graphs  $OR_{m,n}$, $AR_{m-\frac12,n}$, and $AR_{m,n-\frac12}$. Figures \ref{symT2}(b) and (d) illustrate the graph $OR^{o}_{6,5}$ and $OR^{o}_{5,5}$; while the graphs $OR^{e}_{5,5}$ and $OR^{e}_{6,5}$ are shown in  Figures \ref{symT1}(b) and (d). See Figures \ref{symT3}(b) and (d) for $AR^{o}_{5,9/2}$ and $AR^{o}_{6,9/2}$, and Figures \ref{symT4}(b) and (d) for $AR^{e}_{6,9/2}$ and $AR^{e}_{5,9/2}$. Finally,  examples of $AR^{o}_{m,n}$ and $AR^{e}_{m,n}$  are illustrated in  Figures \ref{symT8}(b) and (d) and in Figures \ref{symT7}(b) and (d), respectively.

Similar to the case of the Aztec rectangles,  the above trimmed Aztec rectangles  do not have perfect matchings in general, and we are interested in the case in which some bottom vertices of them have been removed.

Label the bottom vertices of $AR^{e}_{m,n}$,  $AR^{o}_{m,n}$, $OR^{e}_{m,n}$, and $OR^{o}_{m,n}$ by $1,2,\dotsc,n$ from left to right. For $0\leq k\leq n$ and $1\leq a_1<a_2<\cdots<a_k\leq n$, define $AR^{e}_{m,n}(a_1,a_2,\dotsc,a_k)$ (resp., $AR^{o}_{m,n}(a_1,a_2,\dotsc,a_k)$) to be the graph obtained from $AR^{e}_{m,n}$ (resp., $AR^{o}_{m,n}$) by removing all bottom vertices, except for the ones at the positions $a_1,a_2,\dotsc,a_k$. Define $OR^{e}_{m,n}(a_1,a_2,\dotsc,a_k)$ (resp., $OR^{o}_{m,n}(a_1,a_2,\dotsc,a_k)$) to be the graph obtained from $OR^{e}_{m,n}$ (resp., $OR^{o}_{m,n}$) by removing the bottom vertices at the positions $a_1,a_2,\dotsc,a_k$.

Similarly, we label the bottom vertices of $AR^{e}_{m,n+\frac12}$ and $AR^{e}_{m-\frac12,n}$ by $0,1,2,\dotsc,n$ from left to right; and we also label the bottom vertices of  $AR^{o}_{m,n+\frac12}$, and $AR^{o}_{m-\frac12,n}$ by $1,2,\dotsc,n$. For $0\leq k\leq n$ and $1\leq a_1<a_2<\cdots<a_k\leq n$, define $AR^{e}_{m,n+\frac12}(a_1,a_2,\dotsc,a_k)$ (resp., $AR^{o}_{m,n+\frac12}(a_1,a_2,\dotsc,a_k)$) to be the graph obtained from $AR^{e}_{m,n+\frac12}$ (resp., $AR^{o}_{m,n+\frac12}$) by removing all bottom vertices, except for the ones at the positions $a_1,a_2,\dotsc,a_k$.  The graph $AR^{e}_{m-\frac12,n}(a_1,a_2,\dotsc,a_k)$ (resp., $AR^{o}_{m-\frac12,n}(a_1,a_2,\dotsc,a_k)$) is the graph obtained from $AR^{e}_{m-\frac12,n}$ (resp., $AR^{o}_{m-\frac12,n}$) by removing the bottom vertices at the positions $0$ and $a_1,a_2,\dotsc,a_k$ (resp., at the positions $a_1,a_2,\dotsc,a_k$).

The author showed that perfect matchings of a trimmed Aztec rectangle are always enumerated by a simple product formula (see Theorems 1.2 and 1.3 in \cite{Tri3}; strictly speaking, our graphs here are the dual graphs of the regions in these theorems).

\begin{thm}\label{QARthm}
For any $1\leq k<n$ and $1\leq a_1<a_2<\cdots<a_k\leq n$
\begin{align}\label{main1}
\M(AR^{e}_{2k-1,n}(a_1,a_2,\dotsc,a_k))=\M(AR^{e}_{2k,n}(a_1,a_2,\dotsc,a_k))=\E(a_1,a_2,\dotsc,a_k),
\end{align}
\begin{align}\label{main2}
\M(AR^{o}_{2k,n}(a_1,a_2,\dotsc,a_k))=\M(AR^{o}_{2k+1,n}(a_1,a_2,\dotsc,a_k))=\Od(a_1,a_2,\dotsc,a_k),
\end{align}
\begin{align}\label{main3}
\M(OR^{e}_{2k,n}(a_1,a_2,\dotsc,a_{k}))=\M(OR^{e}_{2k+1,n}(a_1,a_2,\dotsc,a_{k}))=2^{-k}\Od(a_1,a_2,\dotsc,a_k),
\end{align}
\begin{align}\label{main4}
\M(OR^{o}_{2k-1,n}(a_1,a_2,\dotsc,a_{k}))=\M(OR^{o}_{2k,n}(a_1,a_2,\dotsc,a_{k}))=2^{-k}\E(a_1,a_2,\dotsc,a_k),
\end{align}
\begin{align}\label{VReq2}
\M(AR^{o}_{2k,n-\frac12}(a_1,a_2,\dotsc,a_k))=\M(AR^{o}_{2k+1,n-\frac12}(a_1,a_2,\dotsc,a_k))=2^{k}\overline{\Od}(a_1,a_2,\dotsc,a_k),
\end{align}
\begin{align}\label{VReq1}
\M(AR^{e}_{2k-1,n-\frac12}(a_1,a_2,\dotsc,a_k))=\M(AR^{e}_{2k,n-\frac12}(a_1,a_2,\dotsc,a_k))=2^{-k}\overline{\E}(a_1,a_2,\dotsc,a_k),
\end{align}
\begin{align}\label{VReq4}
\M(AR^{o}_{2k-\frac12,n}(a_1,a_2,\dotsc,a_{k}))=\M(AR^{o}_{2k+\frac12,n}(a_1,a_2,\dotsc,a_k))=\overline{\Od}(a_1,a_2,\dotsc,a_k),
\end{align}
and
\begin{align}\label{VReq3}
\M(AR^{e}_{2k+\frac12,n}(a_1,a_2,\dotsc,a_{k}))=\M(AR^{e}_{2k+3/2,n}(a_1,a_2,\dotsc,a_k))=\frac{1}{(2k)!}\overline{\E}(a_1,a_2,\dotsc,a_k).
\end{align}
\end{thm}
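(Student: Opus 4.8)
The plan is to reduce the entire theorem to the tiling enumeration of \emph{quartered Aztec rectangles} established in \cite{Tri3}. Each of the trimmed Aztec rectangle graphs appearing in the statement is, after the removal of a suitable set of forced edges, isomorphic to the dual graph of one of the four families of regions whose tiling numbers are precisely $\E$, $\Od$, $\overline{\E}$, and $\overline{\Od}$. Since the tilings of a region are in bijection with the perfect matchings of its dual graph, once this identification is made the closed formulas follow essentially verbatim from Theorems 1.2 and 1.3 of \cite{Tri3}. Thus the substance of the proof is not a new computation but a careful translation between the region language of \cite{Tri3} and the graph language used here.

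First I would dispose of the leftmost equality in each of the eight displayed lines, e.g.\ $\M(AR^{e}_{2k-1,n}(\mathbf{a}))=\M(AR^{e}_{2k,n}(\mathbf{a}))$. In each such pair the two graphs differ only by one extra horizontal layer of vertices at the top of the larger graph. Because the trimming deletes an alternating set of left-boundary vertices, the vertices of this extra layer (together with their neighbors) admit a unique local matching: a chain of forced edges propagates along the top. Peeling off these forced edges — each of weight $1$, so that $\M$ is unchanged by the identity $\M(G)=\M(G')\prod_i\wt(e_i)$ — collapses the larger graph onto the smaller one. This settles every doubling equality and reduces the theorem to eight single matching computations.

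Next I would exhibit, case by case, the explicit identification of the reduced graph with the dual graph of the appropriate quartered Aztec rectangle. The parity of $m$ relative to $k$ recorded in the subscripts ($2k$, $2k\pm1$, $2k\pm\tfrac12$, $2k+\tfrac32$) dictates which of the four regions, and which half-integer column shift, one obtains, while the retained bottom-vertex positions $a_1<\dots<a_k$ correspond to the forced unit cells along the straight edge of the region. The scalar prefactors — the $2^{\pm k}$ in \eqref{main3}, \eqref{main4}, \eqref{VReq1}, \eqref{VReq2}, and the $\tfrac{1}{(2k)!}$ in \eqref{VReq3} — arise from the normalization relating the (possibly weighted) tiling count of \cite{Tri3} to the unweighted matching count here; I would track these through the dual-graph correspondence, invoking the Star Lemma (Lemma \ref{star}) to redistribute weights wherever the region of \cite{Tri3} carries nontrivial edge weights.

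The main obstacle is organizational rather than conceptual: correctly aligning the eight boundary-labeling conventions (odd- versus even-trimmed, base present or absent, integer versus half-integer shift) with the conventions of \cite{Tri3}, and verifying in each case that the forced-edge reduction genuinely lands on the claimed region \emph{without} displacing the positions $a_1,\dots,a_k$. Once this bookkeeping is pinned down and the prefactors are reconciled, no further computation is required, and the eight formulas follow directly from the cited results.
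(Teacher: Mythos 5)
Your proposal takes essentially the same route as the paper: the paper offers no independent proof of Theorem \ref{QARthm}, presenting it instead as a direct translation of Theorems 1.2 and 1.3 of \cite{Tri3}, with the remark that the trimmed Aztec rectangles here are precisely the dual graphs of the quartered Aztec rectangle regions enumerated there. Your additional forced-edge reductions and prefactor bookkeeping are just an explicit fleshing-out of that same dual-graph identification.
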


\section{Centrally symmetric matchings of an Aztec rectangle with holes}\label{Mainproof1}

In his Ph.D. thesis \cite{Yang}, Bo-Yin Yang  proved a conjecture posed by Jockush on the number of centrally symmetric tilings of the Aztec diamond region. Ciucu reproved the result in \cite{Ciucu} by using his own factorization thorem (Theorem \ref{factorthm}) and a tiling enumeration of  Jockush  and Propp \cite{JP}.   It is worth noticing that the author gave a new proof for  Jockush--Propp's enumeration in \cite{Tri5}, and also generalized it  in \cite{Tri3}. In this section, we enumerate centrally symmetric perfect matchings of an Aztec rectangle with several vertices removed along the symmetry axis (we also call these removed vertices \emph{holes}). Our result implies Ciucu and Yang's previous work as a special case when the set of removed vertices is empty (and the Aztec rectangle becomes an Aztec diamond graph).

%%%

Consider an Aztec rectangle $AR_{m,n}$ with the horizontal symmetry axis $\ell$ and the vertical symmetry axis $\ell'$. We label the vertices of $AR_{a,b}$ on $\ell$  as follows. If the symmetry center of the graph is a vertex on $\ell$, then we label it by $0$. Label two vertices that are closest to the center by $1$, label the second closest vertices by $2$, and  so on. We remove several vertices so that the resulting graph still admits the vertical symmetry axis $\ell'$. Denote by $\mathcal{S}$  the label set of removed vertices, which are not the center, and denote by $AR_{m,n}(\mathcal{S})$ the resulting graph. Assume that $\{i_1,i_2,\dotsc,i_k\}$ is the label set of the vertices of $AR_{m,n}(\mathcal{S})$ on $\ell$. It is easy to see that if a bipartite graph has perfect matchings, then it must have the same number of vertices in the two vertex classes. This implies that, in any cases,  $|\mathcal{S}|=|m-n|$. Moreover, for even $m$, the graph $AR_{m,n}(\mathcal{S})$ has perfect matchings only if  $m\leq n$; for odd $m$, the graph $AR_{m,n}(\mathcal{S})$ has perfect matchings only if $m\geq n$. In the latter case we also have $n\geq m/2$, since the number of removed vertices $m-n$ must be less than or equal to the number of vertices in $\ell$.  In particular, Ciucu showed that if $m$ is even and $m\leq n$, then the number of perfect matchings of  $AR_{m,n}(\mathcal{S})$ is given by a simple product formula (see Theorem 4.1 in \cite{Ciucu}).

 We are interested in the centrally symmetric perfect matchings of $AR_{m,n}(\mathcal{S})$, i.e. the perfect matchings which are invariant under the $180^\circ$ rotation around the symmetry center of the graph. Denote by $\M^*(G)$ the number of centrally perfect matchings of a graph $G$. We separate the label set $\{i_1,i_2,\dotsc,i_k\}$ of the vertices of $AR_{m,n}(\mathcal{S})$ on $\ell$ into two subsets:  $\mathcal{O}:=\{i_j: \text{$j$ is odd}\}$ and $\mathcal{E}:=\{i_j: \text{$j$ is even}\}$.

The number of centrally symmetric perfect matchings of an Aztec rectangle graph with `holes' $AR_{m,n}(\mathcal{S})$ is given by simple products in the following theorem.
\begin{thm}\label{sym}
(a) For any $n>m$ and $\mathcal{S}=\{s_1,s_2,\dotsc,s_{n-m}\}$
\begin{equation}\label{STeq1}
\M^{*}(AR_{2m,2n}(\mathcal{S}))=
%2^{m}\M(RE_{m,n}(\mathcal{O}))\M(RO_{m,n}(\mathcal{E}))
2^{m}\E(\mathcal{O})\Od(\mathcal{E})
\end{equation}
(b) For any $m>n>m/2$ and $\mathcal{S}=\{ s_1,s_2,\dotsc,s_{m-n}\}$
\begin{equation}\label{STeq2}
\M^{*}(AR_{2m-1,2n-1}(\mathcal{S}))=
%2^{n}\M(TE_{m,n}(\mathcal{S}\cup \mathcal{E}))\M(TO_{m,n}(\mathcal{S}\cup \mathcal{O}))
2^{n-m}\E(\mathcal{S}\cup \mathcal{E}) \Od(\mathcal{S}\cup \mathcal{O})\\
\end{equation}
(c) For any $m>n>m/2$ and $\mathcal{S}=\{ s_1,s_2,\dotsc,s_{m-n-1}\}$
\begin{equation}\label{STeq3}
\M^{*}(AR_{2m-1,2n}(\mathcal{S}))=
%2^{n}\M(\overline{TO}_{m,n}(\mathcal{S}\cup \mathcal{E}))\M(\overline{TE}_{m,n}(\mathcal{S}\cup \mathcal{O})) & \text{if $m$ is odd;}\\
\begin{cases}
\frac{2^n}{(m-2)!}\overline{\E}(\mathcal{S}\cup \mathcal{E})\overline{\Od}(\mathcal{S}\cup \mathcal{O})& \text{if $m$ is even;}\\
\frac{2^n}{(m-1)!}\overline{\E}(\mathcal{S}\cup \mathcal{E})\overline{\Od}(\mathcal{S}\cup \mathcal{O})& \text{if $m$ is odd;}
\end{cases}
\end{equation}
(d) For any $n>m$ and $\mathcal{S}=\{s_1,s_2,\dotsc,s_{n-m-1}\}$
\begin{equation}\label{STeq4}
\M^{*}(AR_{2m,2n-1}(\mathcal{S}))=
%2^{m}\M(\overline{RO}_{m,n}(\mathcal{O}))\M(\overline{RE}_{m,n}(\mathcal{E})) & \text{if $m$ is odd;}\\
\begin{cases}
2^{m-1}\overline{\E}(\mathcal{O})\overline{\Od}(\mathcal{E}) & \text{if $m$ is odd;}\\
2^{m}\overline{\E}(\mathcal{O})  \overline{\Od}(\mathcal{E})& \text{if $m$ is even.}
\end{cases}
\end{equation}
\end{thm}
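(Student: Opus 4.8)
The plan is to count the centrally symmetric matchings by exploiting the \emph{two} reflective symmetries of $AR_{m,n}(\mathcal{S})$ in succession, reducing the problem to the enumeration of perfect matchings of two quarter-graphs, each of which is one of the trimmed Aztec rectangles whose matchings are evaluated in Theorem \ref{QARthm}.

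First I would exploit the horizontal axis $\ell$. Since the vertices of $G:=AR_{m,n}(\mathcal{S})$ lying on $\ell$ form a cut set, Ciucu's Factorization Theorem (Theorem \ref{factorthm}) applies and produces the cut graphs $G^{+}$ (above $\ell$) and $G^{-}$ (below $\ell$). The $180^{\circ}$ rotation $\rho$ maps $G^{+}$ isomorphically onto $G^{-}$, and a matching is centrally symmetric exactly when its $G^{-}$-part is the $\rho$-image of its $G^{+}$-part. Thus, rather than the pairs $(M^{+},M^{-})$ counted by the factorization, the centrally symmetric matchings are indexed by a single matching $N$ of $G^{+}$ (with $M^{-}=\rho(N)$ forced). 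The cutting procedure halves the weights of the edges meeting $\ell$, and the freedom in routing the matching across $\ell$ contributes a power of $2$; carefully accounting for which axis vertices are matched upward versus downward (they are forced to interlace, since $\rho$ is color preserving and swaps the two sides of $\ell$) should yield an identity of the shape $\M^{*}(G)=2^{c}\,\M(G^{+})$ for an explicit exponent $c$ depending only on $m,n$.

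Next I would exploit the vertical axis $\ell'$, which $G^{+}$ still respects. Applying Theorem \ref{factorthm} a second time, now to $G^{+}$ along $\ell'$, splits it as $\M(G^{+})=2^{\eta'}\M(Q_{R})\M(Q_{L})$, where $Q_{R}$ and $Q_{L}$ are the right and left quarter-graphs. The cut along $\ell'$ distributes the retained axis vertices $i_{1}<i_{2}<\dots<i_{k}$ between the two quarters according to the bipartite coloring: the odd-indexed ones land (as bottom vertices) in one quarter and the even-indexed ones in the other, which is precisely the origin of the sets $\mathcal{O}$ and $\mathcal{E}$. Because the coloring shifts by one unit upon crossing $\ell'$, the two quarters are trimmed Aztec rectangles of opposite type: one is an even-trimmed graph $AR^{e}$ with bottom vertices at the positions in $\mathcal{O}$, and the other is an odd-trimmed graph $AR^{o}$ with bottom vertices at the positions in $\mathcal{E}$ (with the half-integer variants arising when one dimension is odd). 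Theorem \ref{QARthm} then evaluates these two matching numbers as $\E(\mathcal{O})$ and $\Od(\mathcal{E})$, respectively $\overline{\E},\overline{\Od}$, or the $\mathcal{S}\cup\mathcal{E}$ and $\mathcal{S}\cup\mathcal{O}$ versions in the cases $m>n$. Throughout, the Vertex-Splitting, Star, and Spider Lemmas (Lemmas \ref{VS}, \ref{star}, \ref{spider}) are the tools I would use to massage $G^{+}$, $Q_{R}$, $Q_{L}$ into the exact normalized forms appearing in Theorem \ref{QARthm} and to track the accumulated edge weights.

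Finally I would run the bookkeeping separately in the four parity cases $(2m,2n)$, $(2m-1,2n-1)$, $(2m-1,2n)$, and $(2m,2n-1)$, since these select the four different families in Theorem \ref{QARthm} and hence the four stated formulas; in the two cases with an odd dimension the half-integer trimmed rectangles contribute the factorial denominators $(m-1)!$ or $(m-2)!$. The main obstacle is the first reduction: establishing cleanly that central symmetry collapses the factorization pair to a single quarter-side matching \emph{with the correct power of $2$}, which requires a precise analysis of how $\rho$ constrains the matching along $\ell$, in particular the forced up/down interlacing of the axis vertices and the treatment of the central vertex, if present, in the odd cases. Once that exponent is pinned down, the second factorization and the identification of the quarters with the graphs of Theorem \ref{QARthm} are comparatively routine, modulo the weight-tracking via Lemmas \ref{VS}--\ref{spider}.
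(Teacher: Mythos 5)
Your overall architecture (reduce to Theorem \ref{QARthm} via Ciucu's Factorization Theorem, with $\mathcal{O}$ and $\mathcal{E}$ arising from how the surviving axis vertices distribute between two quarter graphs) points in the right direction, but the order in which you deploy the two symmetries is wrong, and the reduction you yourself flag as ``the main obstacle'' is in fact false. Your step 1 asserts an identity $\M^{*}(G)=2^{c}\,\M(G^{+})$ with $c$ depending only on $m,n$, where $G^{\pm}$ are Ciucu's cut graphs along $\ell$. Take the Aztec-diamond special case $G=AR_{4,4}$ with $\mathcal{S}=\emptyset$ (exactly the Yang--Ciucu case the theorem is stated to subsume): the theorem gives $\M^{*}(G)=2^{2}\E(1)\Od(2)=4\cdot 2\cdot 6=48$. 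On the other hand, in the cutting procedure all axis vertices have the same color, so the rule keeps the down-edges at the odd-position axis vertices and the up-edges at the even-position ones; the $180^{\circ}$ rotation flips both up/down and position parity, hence carries $G^{+}$ onto $G^{-}$, so $\M(G^{+})=\M(G^{-})$, and Theorem \ref{factorthm} forces $2^{2}\M(G^{+})^{2}=\M(AR_{4,4})=2^{10}$, i.e.\ $\M(G^{+})=16$. Since $48/16=3$, no power of $2$ can relate $\M^{*}(G)$ to $\M(G^{+})$. The conceptual error is that Theorem \ref{factorthm} is a weighted counting identity, proved by cutting edges and halving weights; it is \emph{not} an equivariant bijection between matchings of $G$ and pairs of matchings of $(G^{+},G^{-})$. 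A matching of $G$ has no well-defined ``$G^{+}$-part'' (it may use edges destroyed in forming both pieces), so the identity cannot be restricted to the subclass of centrally symmetric matchings, which is what your indexing ``by a single matching $N$ of $G^{+}$ with $M^{-}=\rho(N)$ forced'' presupposes. A second, independent failure: even granting step 1, your step 2 cannot be executed, because $G^{+}$ is not symmetric about $\ell'$ --- its retained axis vertices occupy only the even positions, and reflection in $\ell'$ interchanges even and odd positions --- so Theorem \ref{factorthm} does not apply to $G^{+}$ along $\ell'$.

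The paper avoids both problems by reversing the order of the two reductions. It first quotients by the rotation itself: take the half of $G$ on one side of the \emph{vertical} axis $\ell'$ and identify the pairs of $\ell'$-vertices $u_i,v_i$ that the rotation exchanges. This produces an orbit graph $\widetilde{H}$, and centrally symmetric matchings of $G$ are in genuine bijection with \emph{all} perfect matchings of $\widetilde{H}$ --- no power of $2$, no appeal to factorization at this stage. Only then is Theorem \ref{factorthm} invoked, applied to $\widetilde{H}$, which by construction is reflectively symmetric about $\ell$ and whose axis vertices form a cut set; its two halves are precisely the trimmed rectangles $AR^{e}_{m,n}(\mathcal{O})$ and $AR^{o}_{m,n}(\mathcal{E})$ (or the variants appearing in parts (b)--(d)), which Theorem \ref{QARthm} evaluates. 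If you replace your step 1 by this folding bijection, the remainder of your plan (the $\mathcal{O}$/$\mathcal{E}$ split between the two halves and the four-way parity bookkeeping) goes through essentially as you describe.
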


\begin{figure}\centering
\includegraphics[width=13.5cm]{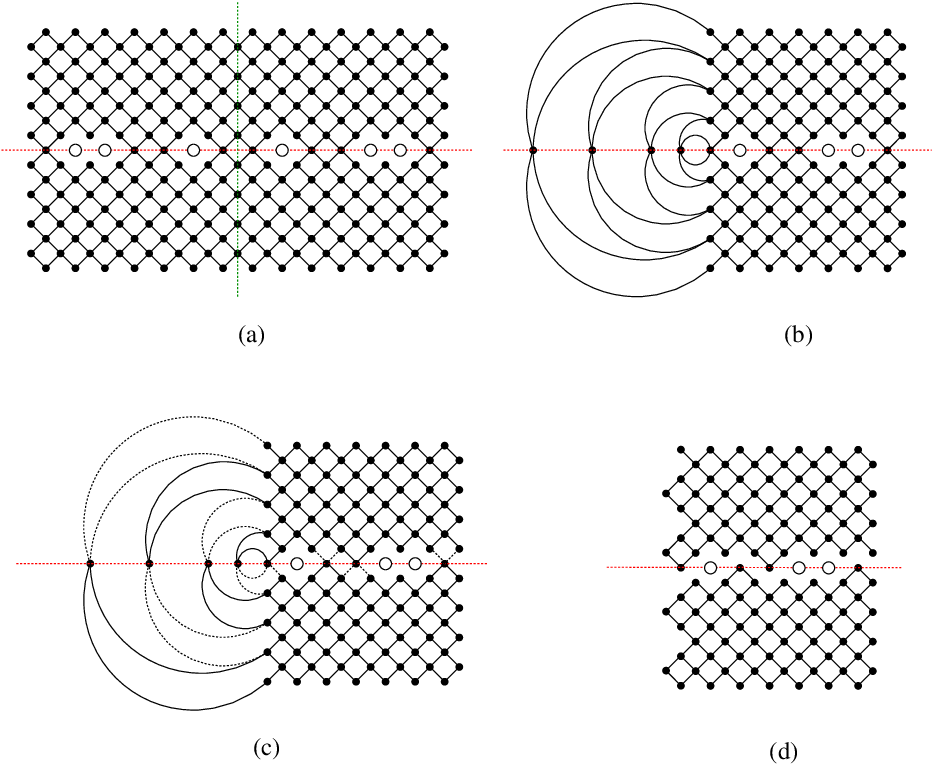}
\caption{Illustrating of the proof of Theorem \ref{sym}.}
\label{Aztechole}
\end{figure}

%\begin{figure}\centering
%\includegraphics[width=13.5cm]{cutQR.eps}
%\caption{Illustrating of the proof of Theorem \ref{sym}; (a) graph $\widetilde{G}$ and (b) graph $\overline{G}$.}
%\label{cutQR}
%\end{figure}

%\begin{figure}\centering
%\includegraphics[width=10cm]{cutQR2.eps}
%\caption{Application of the Factorization Theorem to $\overline{G}$.}
%\label{cutQR2}
%\end{figure}

\begin{proof}[Proof of Theorem \ref{sym}]
We only prove in detail part (a), as the other parts can be obtained in a completely analogous manner. We will use Ciucu's  Factorization Theorem (Theorem \ref{factorthm}) to show that the number of centrally symmetric perfect matchings of our graph is given by a certain product of the numbers of perfect matchings of two graphs in Theorem \ref{QARthm}.

Consider the Aztec rectangle with holes $G=AR_{2m,2n}(\mathcal{S})$ with the horizontal and vertical symmetry axes $\ell$ and $\ell'$ (see Figure \ref{Aztechole}(a) for $AR_{8,14}(2,5,6)$). In this case, we have $n\geq m$ and  $|\mathcal{S}|=n-m$. Consider the subgraph $H$ of $G$ that is induced by vertices lying on $\ell'$ or staying on the right of $\ell'$. Label the vertices of $G$ on $\ell'$ which are staying above the horizontal axis $\ell$ by $v_1,v_2,v_3,\dotsc$ from bottom to top; and label the vertices of $G$ on $\ell'$ which are below $\ell$ by $u_1,u_2,u_3,\dotsc$ from top to bottom.

 It is easy to see that each centrally symmetric perfect matching $\mu$ of $G$ is determined uniquely by its sub-matching $\mu'$ restricted to the edge set of $H$, i.e., $\mu':=\mu \cap E(H)$. On the other hand, by the symmetry of $\mu$, exactly one of two vertices $u_i$ and $v_i$ is covered by $\mu'$. Therefore, the sub-matching $\mu'$ corresponds to a perfect matching of the graph $\widetilde{H}$  obtained from $H$ by identifying $u_i$ and $v_i$, for any $i=1,2,3,\dotsc$.  This implies that the centrally symmetric perfect matchings of $G$ are in bijection with the perfect matchings of $\widetilde{H}$.

Moreover, we can put the vertices in $\widetilde{H}$ which are obtained by identifying $u_i$ and $v_i$ on the horizontal axis $\ell$, so that $\widetilde{H}$ has $\ell$ as its horizontal symmetry axis (see Figure  \ref{Aztechole}(b)). By Ciucu's Factorization Theorem (Theorem \ref{factorthm}), we have
\begin{equation}
\M(\widetilde{H})=2^{m}\M(\widetilde{H}^+)\M(\widetilde{H}^-),
\end{equation}
where $\widetilde{H}$ has exactly $2m$ vertices on $\ell$ (see the cutting procedure in Figures \ref{Aztechole}(c) and (d)).

For even $m$, we have $\widetilde{H}^+$ is isomorphic to $AR^{e}_{m,n}(\mathcal{O})$ and $\widetilde{H}^-$ is isomorphic to $AR^{o}_{m,n}(\mathcal{E})$ (see Figure \ref{Aztechole}(d)), while $\widetilde{H}^+$ is isomorphic to $AR^{o}_{m,n}(\mathcal{E})$ and $\widetilde{H}^-$ is isomorphic to $AR^{e}_{m,n}(\mathcal{O})$ when $m$ is odd. Therefore, (\ref{STeq1}) follows from Theorem \ref{QARthm}. This finishes our proof.
%We present only the proof of part (a), the proofs of the other parts being similar.
%
%Consider the dual graph $G$ of the the region $\mathcal{AR}_{2m,2n}(\mathcal{S})$. Let $\widetilde{G}$ be the subgraph of $G$ induced by the vertices below or on the horizontal symmetry axis $\widetilde{\ell}$ of $G$ (see Figure \ref{cutQR}(a)). Let $T$ be the set of vertices of $\widetilde{G}$ lying on $\widetilde{\ell}$. Label the two vertices of $T$ lying closest to the center of the graph by $1$, label the two next closest vertices by $2$ and so on. Let $\overline{G}$ be the graph obtained from $\widetilde{G}$ by identifying all pair of vertices having the same label  (see Figure \ref{cutQR}(b)). One readily sees that the centrally symmetric tilings of $\mathcal{AR}_{2m,2n}(\mathcal{S})$ are in bijection with the perfect matchings of $\overline{G}$.
%
%
%We can put all vertices of $\overline{G}$, which are obtained from identifying two vertices of the same label in $\widetilde{G}$, on a vertical axis about which $\widetilde{G}$ is symmetric. Thus, we can apply the Factorization Theorem to $\overline{G}$ (see Figure \ref{cutQR2}). Since the number of holes in the Aztec rectangle graph is $l=2m-2n$, we have $2w(\overline{G})=m+(n-l)=2m$. Moreover, $\overline{G}^{\,-}$ is isomorphic to $QE^{2}_{m,n}(\mathcal{E})$, and $\overline{G}^+$ is isomorphic to $QE^{1}_{m,n}(\mathcal{O})$.  Therefore, we obtain
%\begin{equation}
%\M(\overline{G})=2^m\M(QE^{2}_{m,n}(\mathcal{E}))\M(QE^{1}_{m,n}(\mathcal{O})),
%\end{equation}
%which implies the part (a) of the theorem.
\end{proof}
%%%%

\section{Symmetric tilings of  Douglas regions}\label{Mainproof2}

In the first part of this section, we present several new subgraph replacement rules that will be employed in the proof of Theorem \ref{symthm1}.

The \emph{connected sum} $G\#G'$ of two disjoint graphs $G$ and $G'$ along the ordered sets of vertices $\{v_1,\dots,v_n\}\subset V(G)$ and $\{v'_1,\dots,v'_n\}\subset V(G')$ is the graph obtained from $G$ and $G'$ by identifying vertices $v_i$ and $v'_i$, for $i=1,2,\dots,n$.

In the next lemmas (Lemmas \ref{newT1}, \ref{newT2}, \ref{newT3}, and \ref{newT4}),  we always assume that $G$ is a graph, and $\{v_1,v_2,\dotsc,v_n\}$ is an ordered set of its vertices. Moreover, all connected sums
act on $G$ along $\{v_1,v_2,\dotsc,v_n\}$ and on other summands along their bottommost vertices ordered from left to right.

\begin{figure}\centering
\includegraphics[width=12cm]{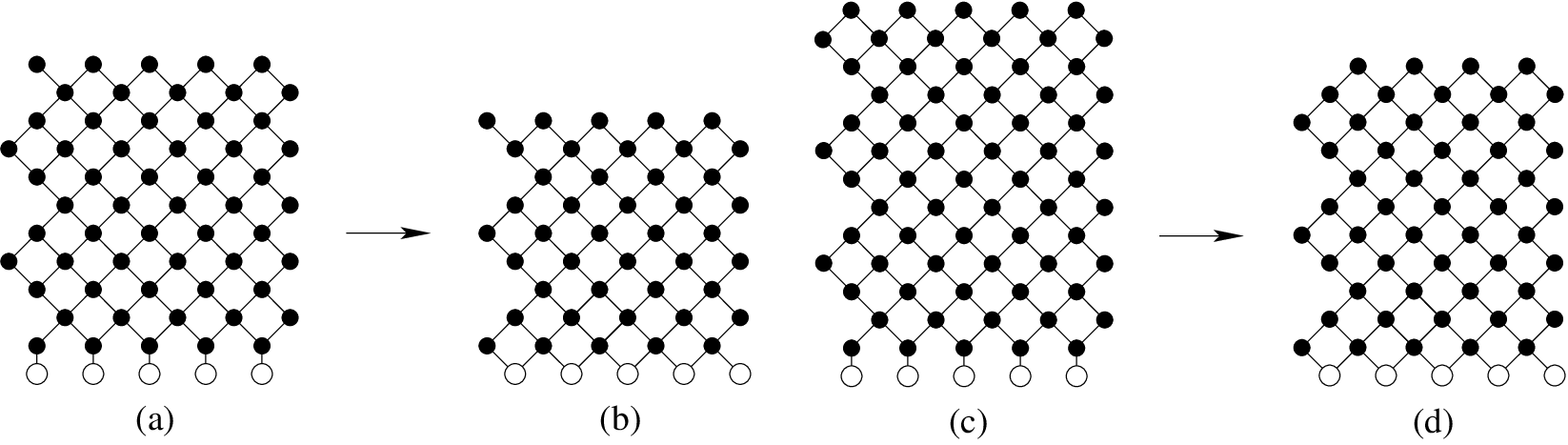}
\caption{Illustrating the transformation in (\ref{T1a}) of Lemma \ref{newT1}.}
\label{symT1}
\end{figure}

\begin{figure}\centering
\includegraphics[width=12cm]{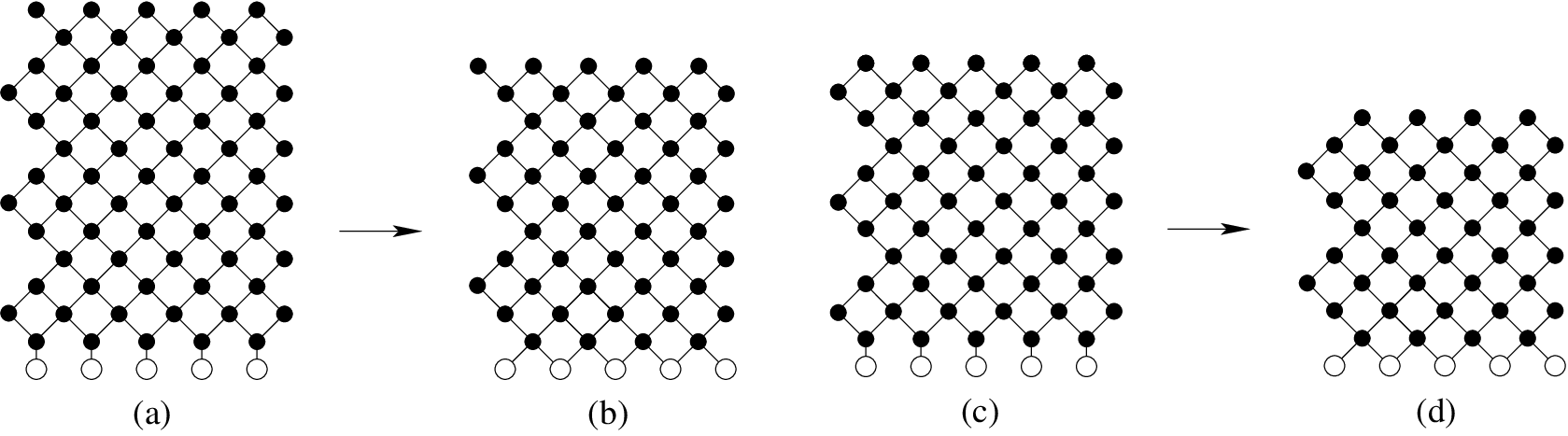}
\caption{Illustrating the transformation in (\ref{T1b}) of Lemma \ref{newT1}.}
\label{symT2}
\end{figure}

\begin{lem}\label{newT1}
\begin{equation}\label{T1a}
\M({}_|AR^{o}_{m,n}\#G)=2^{\lfloor \frac{m}{2}\rfloor}\M(OR^{e}_{m,n}\#G)
\end{equation}
and
\begin{equation}\label{T1b}
\M({}_|AR^{e}_{m,n}\#G)=2^{\lfloor \frac{m+1}{2}\rfloor}\M(OR^{o}_{m,n}\#G),
\end{equation}
where ${}_|AR^{o}_{m,n}$ and ${}_|AR^{e}_{m,n}$ are the graphs  obtained from $AR^{o}_{m,n}$ and $AR^{e}_{m,n}$ by appending $n$ vertical edges to their bottommost vertices, respectively (see Figure \ref{symT1} for examples of the `transformation' in (\ref{T1a}), and Figure \ref{symT2} for examples of the transformation in (\ref{T1b})).
\end{lem}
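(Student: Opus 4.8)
The plan is to prove Lemma \ref{newT1} by exhibiting an explicit local subgraph replacement at the bottom row of the graphs, combined with the Spider Lemma (Lemma \ref{spider}) and the forced-edge principle. The key observation is that ${}_|AR^{o}_{m,n}$ differs from $OR^{e}_{m,n}$ only in a neighborhood of the bottommost vertices: the former carries the dangling vertical edges together with the bottom layer of the odd-trimmed Aztec rectangle, while the latter is an odd Aztec rectangle graph. First I would set up coordinates for the two graphs so that the connected sum with $G$ is untouched: since both ${}_|AR^{o}_{m,n}\#G$ and $OR^{e}_{m,n}\#G$ share the same attaching vertices $\{v_1,\dots,v_n\}$ and the same graph $G$, it suffices to compare the two summands locally and show that applying the replacement rules transforms one into the other while multiplying $\M$ by the stated power of $2$.

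The main engine will be the variants of urban renewal in Lemma \ref{spider}(b),(c). The idea is to march up the trimmed Aztec rectangle one diagonal layer at a time. At each white/black vertex of ${}_|AR^{o}_{m,n}$ whose local neighborhood forms the square-cell configuration $K$ of Figure \ref{spider2}, I would apply the appropriate spider move, each application contributing a factor of $2$ by the lemma and replacing a square-lattice cell by the diagonally-connected (Aztec-type) configuration that appears in $OR^{e}_{m,n}$. The appended vertical edges at the bottom of ${}_|AR^{o}_{m,n}$ are exactly what make the first layer of spider moves applicable; after performing them, the dangling edges either become forced edges — which I remove, picking up no extra weight since their weight is $1$ — or get absorbed into the urban-renewal cells. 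Tracking the number of spider moves gives the exponent: for ${}_|AR^{o}_{m,n}$ one obtains $\lfloor m/2\rfloor$ applicable cells (one per pair of trimmed rows), yielding $2^{\lfloor m/2\rfloor}$, and for ${}_|AR^{e}_{m,n}$ the parity shift produces $\lfloor (m+1)/2\rfloor$, matching \eqref{T1a} and \eqref{T1b} respectively.

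The Star Lemma (Lemma \ref{star}) and the weight-normalization step will be needed to clear the fractional weights $x/2$, $1/(2x)$ introduced by the spider moves, restoring all weights to $1$ so that the final graph is genuinely the unweighted $OR^{e}_{m,n}\#G$ (resp.\ $OR^{o}_{m,n}\#G$); since all the graphs here are unweighted to begin with, the weights created and destroyed cancel, and only the global powers of $2$ survive. I would verify on the small illustrative cases in Figures \ref{symT1} and \ref{symT2} that the bookkeeping of layers, forced edges, and spider-move count is correct before asserting the general pattern.

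The hard part will be the careful accounting of the \emph{parity} of $m$: the odd- versus even-trimmed rectangles remove odd- versus even-labeled left vertices, which shifts where the spider-applicable cells sit and hence whether the topmost layer contributes an extra move. This is precisely what distinguishes the exponent $\lfloor m/2\rfloor$ in \eqref{T1a} from $\lfloor (m+1)/2\rfloor$ in \eqref{T1b}, so the proof must treat the top boundary of the rectangle delicately — checking that for one parity the last diagonal admits a forced edge (no factor of $2$) while for the other it admits a final spider move (an extra factor of $2$). I expect this boundary analysis, rather than the bulk of the induction, to be where the real care is required.
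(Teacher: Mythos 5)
Your proposal rests on a premise that is false: you claim that ${}_|AR^{o}_{m,n}$ differs from $OR^{e}_{m,n}$ ``only in a neighborhood of the bottommost vertices,'' and consequently that the transformation consists of roughly $\lfloor m/2\rfloor$ local spider moves near the bottom row. In fact the two graphs differ \emph{everywhere}: $AR^{o}_{m,n}$ lives on the white squares of the chessboard while $OR^{e}_{m,n}$ lives on the black squares, so every vertex and every edge must be relocated. The paper's proof accordingly first applies the Vertex-Splitting Lemma \ref{VS} at every vertex incident to a cell (a preparation step your proposal omits entirely, and without which the Spider Lemma's hypothesis that the four inner vertices have no outside neighbors fails), and then performs urban renewal at \emph{all} $mn$ diamonds and partial diamonds of the graph, not at $\lfloor m/2\rfloor$ of them. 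A scheme with only $\lfloor m/2\rfloor$ spider moves cannot produce $OR^{e}_{m,n}$ at all.

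Your exponent bookkeeping fails for the same reason. You assert that the fractional weights ``created and destroyed cancel, and only the global powers of $2$ survive,'' with the exponent equal to the number of spider moves. In the correct accounting the two counts are both large and the answer is their \emph{difference}: the $mn$ spider moves contribute $2^{mn}$ and leave every edge of the resulting copy of $OR^{e}_{m,n}$ carrying weight $1/2$; restoring unit weights by the Star Lemma with $t=2$ at $mn-\lfloor \frac{m}{2}\rfloor$ suitably chosen vertices contributes the factor $2^{-(mn-\lfloor \frac{m}{2}\rfloor)}$, so that
\begin{equation*}
\M({}_|AR^{o}_{m,n}\#G)=2^{mn}\,2^{-(mn-\lfloor \frac{m}{2}\rfloor)}\,\M(OR^{e}_{m,n}\#G)=2^{\lfloor \frac{m}{2}\rfloor}\M(OR^{e}_{m,n}\#G).
\end{equation*}
In particular the Star Lemma step is very far from weight-neutral; it is precisely where the compensating negative power of $2$ comes from, and the parity distinction between $\lfloor m/2\rfloor$ in (\ref{T1a}) and $\lfloor (m+1)/2\rfloor$ in (\ref{T1b}) arises from the count of normalization vertices in $OR^{e}_{m,n}$ versus $OR^{o}_{m,n}$, not from a forced-edge-versus-spider-move dichotomy at the top boundary as you conjecture. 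The toolkit you name (Spider, Star, forced edges) is the right one, but the plan as written would not compile into a proof.
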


\begin{figure}\centering
\includegraphics[width=10cm]{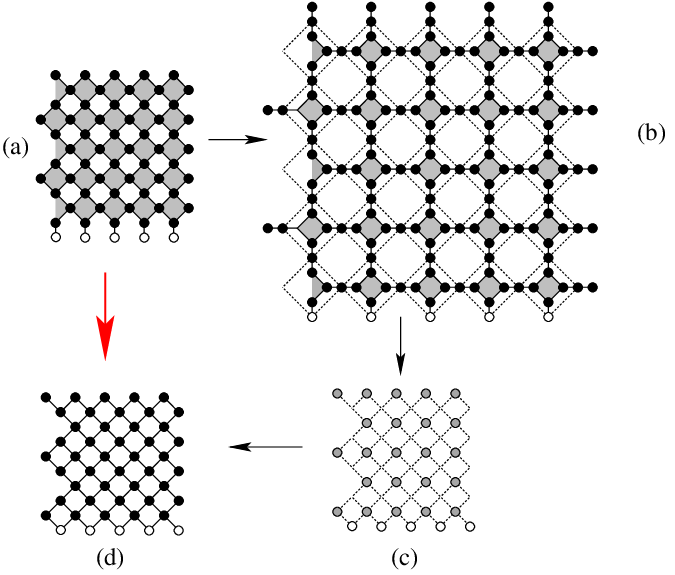}
\caption{Illustration of the proof of Lemma \ref{newT1}.}
\label{proofT1}
\end{figure}

\begin{proof}
We only prove here the transformation in (\ref{T1a}), based on Figure \ref{proofT1},  for $m=n=5$, as the transformation in (\ref{T1b}) can be obtained in the same way.

First, we apply the Vertex-splitting Lemma  (Lemma \ref{VS}) to all vertices of  ${}_|AR^{o}_{m,n}\#G$ that are incident to a shaded diamond or a partial diamond as in Figure \ref{proofT1}(a). We get the graph $G_1$ on Figure \ref{proofT1}(b). Next, we apply the Spider Lemma (Lemma \ref{spider}) around $mn$ shaded diamonds and partial diamonds (the dotted edges have weight $\frac12$), and remove all edges incident to a vertex of degree 1, which are forced. We obtain a weighted graph $G_2$ obtained from $OR^{e}_{m,n}\#G$ by assigning to each edge of $OR^{e}_{m,n}$ a weight $\frac12$. Finally, we get back the graph  $OR^{e}_{m,n}\#G$ by applying the Star Lemma (Lemma \ref{star}) with factor $t=2$ at $mn-\lfloor \frac{m}{2}\rfloor$ shaded vertices as in Figure \ref{proofT1}(c). By Lemmas \ref{VS}, \ref{star}, and \ref{spider}, we have
\begin{align}
\M({}_|AR^{o}_{m,n}\#G)=\M(G_1)=2^{mn}\M(G_2)=2^{mn}2^{-(mn-\lfloor \frac{m}{2}\rfloor)}\M(OR^{e}_{m,n}\#G),
\end{align}
which implies  (\ref{T1a}).
\end{proof}

\begin{figure}\centering
\includegraphics[width=12cm]{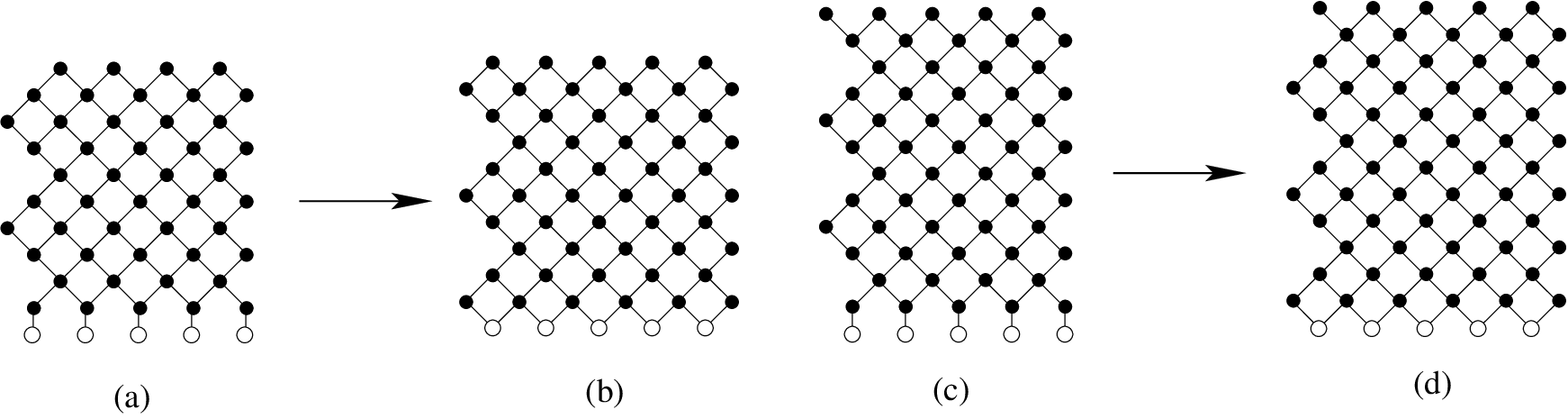}
\caption{Illustrating the transformation in (\ref{T2a}) of Lemma \ref{newT2}.}
\label{symT6}
\end{figure}

\begin{figure}\centering
\includegraphics[width=12cm]{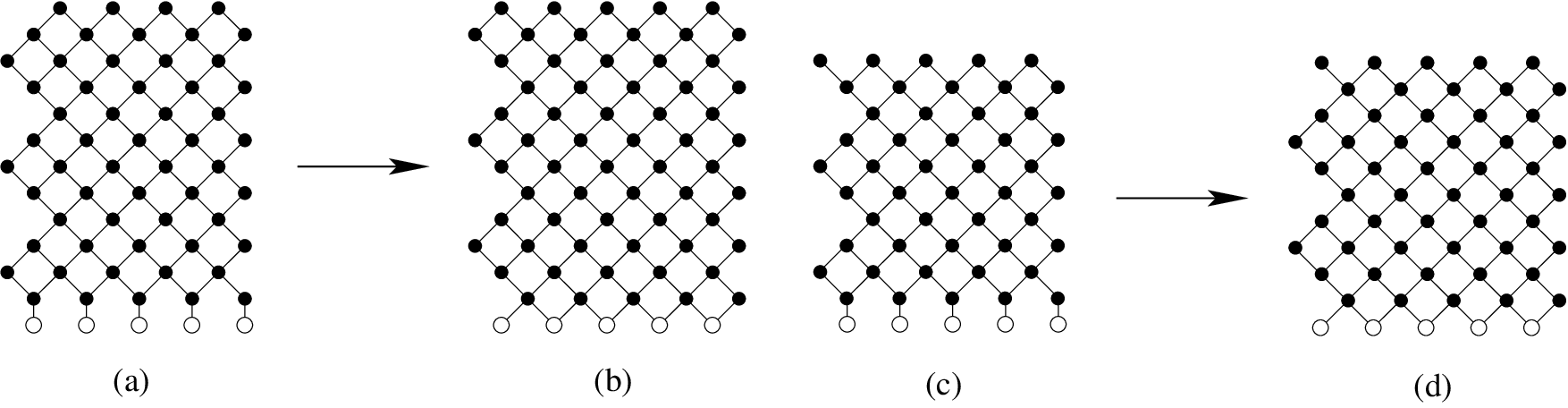}
\caption{Illustrating the transformation in (\ref{T2b}) of Lemma \ref{newT2}.}
\label{symT5}
\end{figure}

By applying the transformations in Lemma \ref{newT1} (in reverse), and then the Vertex-splitting Lemma, one can get the following transformations.

\begin{lem}\label{newT2}

\begin{equation}\label{T2a}
\M({}_|OR^{o}_{m,n}\#G)=2^{-\lfloor \frac{m+1}{2}\rfloor}\M(AR^{e}_{m,n}\#G)
\end{equation}
and
\begin{equation}\label{T2b}
\M({}_|OR^{e}_{m,n}\#G)=2^{-\lfloor \frac{m}{2}\rfloor}\M(AR^{o}_{m,n}\#G),
\end{equation}
where ${}_|OR^{o}_{m,n}$ and ${}_|OR^{e}_{m,n}$ are the graphs obtained from $OR^{o}_{m,n}$ and $OR^{e}_{m,n}$ by appending $n$ vertical edges to their bottommost vertices, respectively (Figure \ref{symT6} shows the transformation in (\ref{T2a}), and Figure \ref{symT5} illustrates the transformation in (\ref{T2b})).
\end{lem}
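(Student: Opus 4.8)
The plan is to follow the hint preceding the statement: run the transformations of Lemma~\ref{newT1} \emph{backwards} and reconcile the appended vertical edges by means of the Vertex-Splitting Lemma~\ref{VS}. I will prove~(\ref{T2b}); the identity~(\ref{T2a}) then follows by the same argument with the roles of $AR^{o}$ and $OR^{e}$ replaced by those of $AR^{e}$ and $OR^{o}$, and with the exponent $\lfloor \frac{m}{2}\rfloor$ replaced by $\lfloor \frac{m+1}{2}\rfloor$, matching the factor appearing in~(\ref{T1b}).

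First I would start from the right-hand side, the graph $AR^{o}_{m,n}\#G$, in which each bottommost vertex $a_i$ of $AR^{o}_{m,n}$ has been identified with the vertex $v_i$ of $G$. Its neighbor set at this merged vertex splits naturally as $H_i\cup K_i$, where $H_i$ collects the neighbors coming from $AR^{o}_{m,n}$ and $K_i$ the neighbors coming from $G$. Applying the Vertex-Splitting Lemma~\ref{VS} with respect to this partition replaces the merged vertex by a path $a_i-x_i-v_i$, where $x_i$ has degree $2$, the vertex $a_i$ retains the $AR^{o}_{m,n}$-neighbors, and $v_i$ retains the $G$-neighbors; by that lemma the matching count is unchanged.

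Next I would reinterpret the resulting graph as a single connected sum. The edges $a_i-x_i$ are exactly $n$ vertical edges appended to the bottom of $AR^{o}_{m,n}$, so the upper portion is precisely ${}_|AR^{o}_{m,n}$ with bottommost vertices $x_1,\dots,x_n$; the remaining data (the edges $x_i-v_i$ together with $G$) form an auxiliary graph $H$, namely $G$ with a pendant edge attached at each $v_i$, the new pendant endpoints being the summing vertices. Thus the split graph is exactly ${}_|AR^{o}_{m,n}\#H$, and I may invoke~(\ref{T1a}) to get $\M({}_|AR^{o}_{m,n}\#H)=2^{\lfloor \frac{m}{2}\rfloor}\M(OR^{e}_{m,n}\#H)$. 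Finally, in $OR^{e}_{m,n}\#H$ each bottommost vertex $b_i$ of $OR^{e}_{m,n}$ is identified with a pendant endpoint of $H$, which is joined to $v_i$; hence $b_i$ is joined to $v_i$ by a single edge, and this graph is nothing but ${}_|OR^{e}_{m,n}\#G$. Chaining the three equalities gives $\M(AR^{o}_{m,n}\#G)=2^{\lfloor \frac{m}{2}\rfloor}\M({}_|OR^{e}_{m,n}\#G)$, which is~(\ref{T2b}) after dividing by $2^{\lfloor \frac{m}{2}\rfloor}$.

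The step I expect to require the most care is the bookkeeping of the vertical edges across the transformation: the length-$2$ path $a_i-x_i-v_i$ produced by the vertex split must be partitioned so that its upper half $a_i-x_i$ is consumed as the appended edge of ${}_|AR^{o}_{m,n}$ in the hypothesis of~(\ref{T1a}), while its lower half $x_i-v_i$ survives, after the transformation, as the appended edge of ${}_|OR^{e}_{m,n}$ in the conclusion. Verifying that this correspondence is exact—so that no stray degree-$2$ vertex or spurious subdivision is left over—is what makes the single factor $2^{\lfloor \frac{m}{2}\rfloor}$ (rather than some residual power of $2$) come out correctly; checking the parity of the exponent against~(\ref{T1a}) for~(\ref{T2b}) and against~(\ref{T1b}) for~(\ref{T2a}) is the only remaining routine point.
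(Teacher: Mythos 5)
Your proof is correct and is essentially the paper's own argument: the paper's one-line justification (apply the transformations of Lemma \ref{newT1} in reverse, then the Vertex-Splitting Lemma \ref{VS}) is exactly your chain read in the opposite direction, since you split the merged vertices first and then apply (\ref{T1a}), resp.\ (\ref{T1b}), forward, absorbing the pendant edges into the auxiliary summand $H$. Your bookkeeping of the two halves of each split path $a_i\hbox{--}x_i\hbox{--}v_i$ and the resulting exponents $\lfloor \frac{m}{2}\rfloor$ and $\lfloor \frac{m+1}{2}\rfloor$ is accurate.
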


\begin{figure}\centering
\includegraphics[width=12cm]{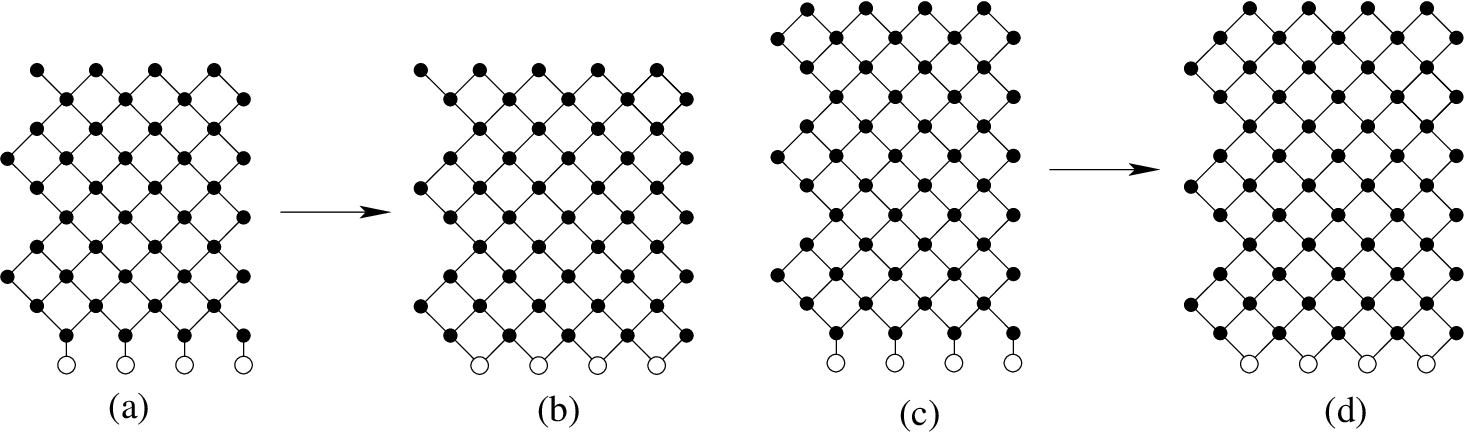}
\caption{Illustrating the transformation in (\ref{T3a}) of Lemma \ref{newT3}.}
\label{symT4}
\end{figure}

\begin{figure}\centering
\includegraphics[width=12cm]{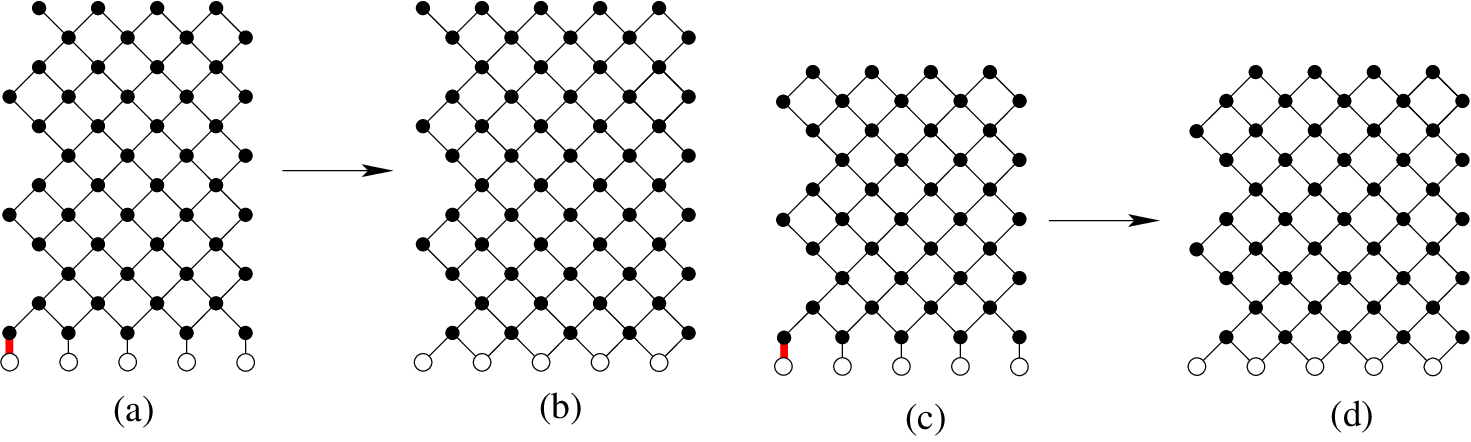}
\caption{Illustrating the transformation in (\ref{T3b}) of Lemma \ref{newT3}. The red bold edges at the lower-left corners of the graphs (a) and (c) are weighted by $\frac12$.}
\label{symT3}
\end{figure}

\newpage

By using Ciucu's Lemma \ref{4cycle} together with Lemmas \ref{VS}--\ref{spider}, one gets the following lemma.

\begin{lem}\label{newT3}
 \begin{equation}\label{T3a}
\M({}_|AR^{o}_{m-\frac12,n}\#G)=2^{-\lfloor \frac{m}{2}\rfloor}\M(AR^{o}_{m,n+\frac12}\#G)
\end{equation}
and
\begin{equation}\label{T3b}
\M({}_|AR^{e}_{m-\frac12,n-1}\#G)=2^{-\lfloor \frac{m+1}{2}\rfloor}\M(AR^{e}_{m,n-\frac12}\#G),
\end{equation}
where ${}_|AR^{o}_{m-\frac12,n}$ is the graph obtained from $AR^{o}_{m-\frac12,n}$ by appending $n$ vertical edges to its bottommost vertices; and where ${}_|AR^{e}_{m-\frac12,n-1}$ is the graph obtained from $AR^{e}_{m-\frac12,n-1}$ by appending $n$ vertical edges to its bottommost vertices, the leftmost vertical edge is weighted by $\frac12$ (the transformation in (\ref{T3a}) is shown in Figure \ref{symT4}, and the transformation in (\ref{T3b}) is illustrated in Figure \ref{symT3}).
\end{lem}

\begin{figure}\centering
\includegraphics[width=10cm]{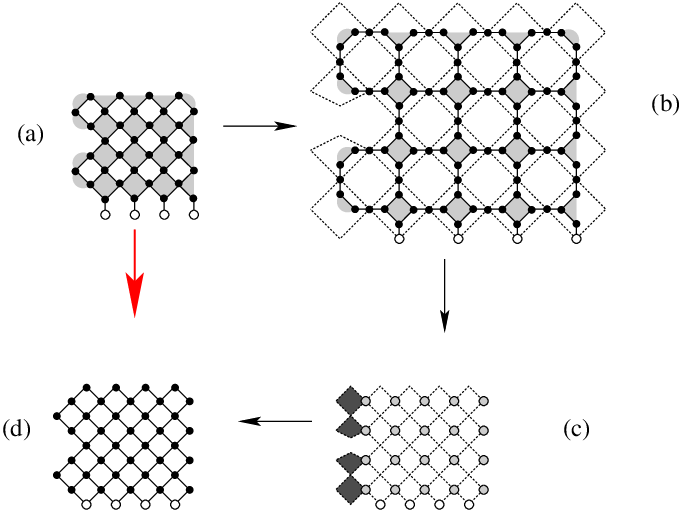}
\caption{Illustrating the proof of (\ref{T3a}) in Lemma \ref{newT3}.}
\label{proofT3}
\end{figure}

\begin{figure}\centering
\includegraphics[width=10cm]{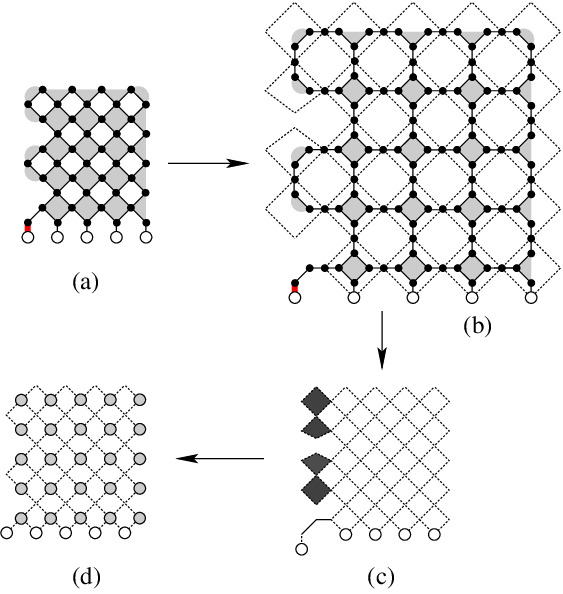}
\caption{Illustrating the proof of (\ref{T3b}) in Lemma \ref{newT3}.}
\label{proofT3b}
\end{figure}

\begin{proof}
We only need to prove (\ref{T3a}) for even $m$, and the case of odd $m$ follows from the even case by removing the southeast-to-northwest forced edges on the top of ${}_|AR^{o}_{m-\frac12,n}$ and $AR^{o}_{m,n+\frac12}$.

Our proof is illustrated in Figure \ref{proofT3}, for $m=4$ and $n=4$.

 First, apply  the Vertex-splitting Lemma to the vertices in ${}_|AR^{o}_{m-\frac12,n}\#G$ that are incident to a shaded diamond or a partial diamond (see Figures \ref{proofT3}(a) and (b)). Second, apply suitable replacement in the Spider Lemma  around $mn$ shaded diamonds and partial diamonds. Third, apply Lemma \ref{4cycle} to remove $\frac{m}{2}$ 7-vertex subgraphs consisting of two shaded 4-cycles (see Figure \ref{proofT3}(c); the dotted edges are weighted by $\frac12$). Finally, apply the Star Lemma with factor $t=2$ to all $m(n+1)$ shaded vertices as in Figure \ref{proofT3}(c). The resulting graph is exactly $AR^{o}_{m,n+\frac12}\#G$. Keeping track the weight factors in the above transformations, we obtain the following equality
\begin{equation}
\M({}_|AR^{o}_{m-\frac12,n}\#G)=2^{m(n+1)}2^{- \frac{m}{2}}2^{-m(n+1)}\M(AR^{o}_{m,n+\frac12}\#G),
\end{equation}
which implies (\ref{T3a}).

Next, we show the proof of (\ref{T3b}) for odd $m$, the case of even $m$ follows from the odd case by removing  southeast-to-northwest forced edges on the top of ${}_|AR^{e}_{m-\frac12,n-1}$ and $AR^{e}_{m,n-\frac12}$. Our proof  is shown in Figure \ref{proofT3b}, for $m=5$ and $n=4$. We apply the Vertex-splitting Lemma to the vertices in ${}_|AR^{e}_{m-\frac12,n}\#G$ incident to a shaded diamond or partial diamond  as in Figures \ref{proofT3b}(a) and (b). Then  apply  the Spider Lemma to $m(n+1)-1$ shaded diamonds and partial diamonds. Next, we apply Lemma \ref{4cycle} to remove $\frac{m-1}{2}$ subgraphs consisting of two shaded 4-cycles (see Figure \ref{proofT3b}(c); the dotted edges have weight $\frac12$), and apply the Vertex-splitting Lemma (in reverse) to eliminate the two solid edges in the resulting graph. Finally, apply the Star Lemma (for the factor $t=2$) to all $m(n+1)$ shaded vertices. This way, we obtain the graph $AR^{e}_{m,n-\frac12}\#G$ on the right-hand side of  (\ref{T3b}).  In summary, we get the following equality:
\begin{equation}
\M({}_|AR^{e}_{m-\frac12,n}\#G)=2^{m(n+1)-1}2^{-\frac{m-1}{2}}2^{-m(n+1)}\M(AR^{e}_{m,n+\frac12}\#G),
\end{equation}
which yields (\ref{T3b}).
\end{proof}

\begin{figure}\centering
\includegraphics[width=12cm]{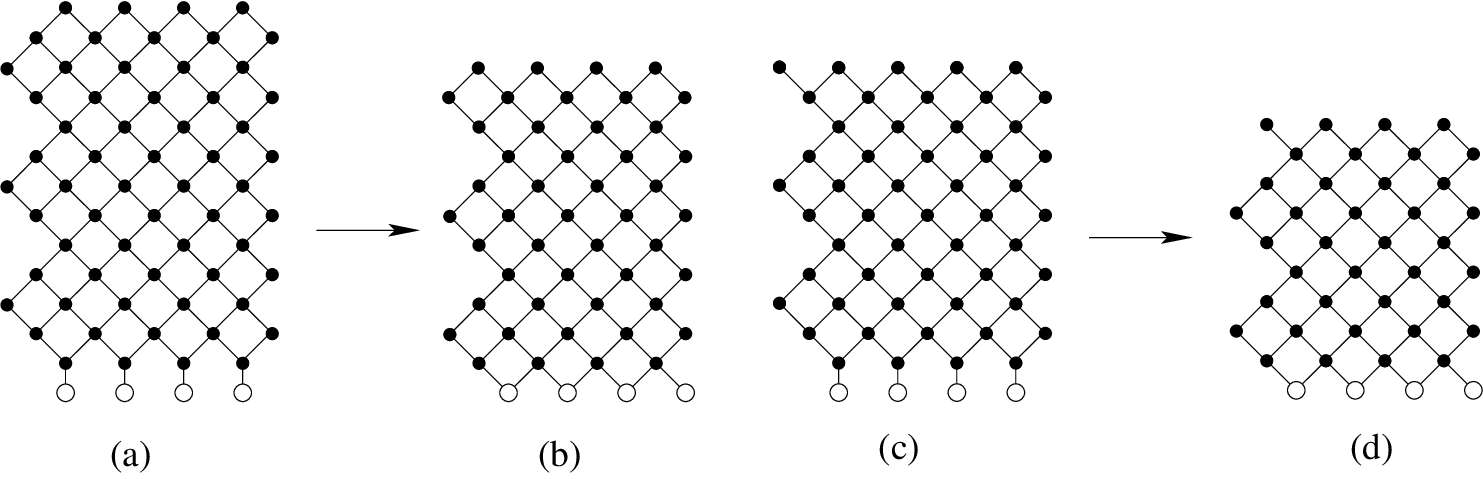}
\caption{Illustrating the transformation in (\ref{T4a}) of Lemma \ref{newT4}}
\label{symT8}
\end{figure}

\begin{figure}\centering
\includegraphics[width=12cm]{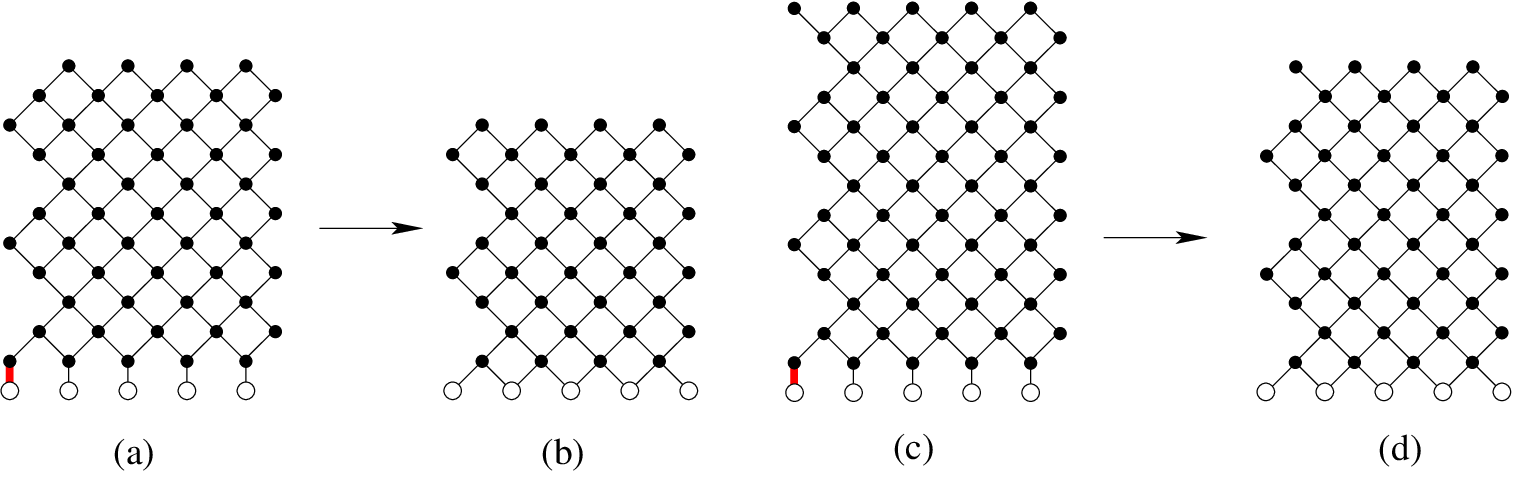}
\caption{Illustrating the transformation in (\ref{T4b}) of Lemma \ref{newT4}. Red edge has weight $\frac12$.}
\label{symT7}
\end{figure}

Similar to Lemma \ref{newT3}, we have the following lemma. The proof of the next lemma is essentially the same as that of Lemma \ref{newT3}, and will be  omitted.

\begin{lem}\label{newT4}

\begin{equation}\label{T4a}
\M({}_|AR^{o}_{m,n+\frac12}\#G)=2^{\lfloor \frac{m}{2}\rfloor}\M(AR^{o}_{m-\frac12,n}\#G),
\end{equation}
and
\begin{equation}\label{T4b}
\M({}_|AR^{e}_{m,n-\frac12}\#G)=2^{\lfloor \frac{m+1}{2}\rfloor-1}\M(AR^{e}_{m-\frac12,n-1}\#G),
\end{equation}
where ${}_|AR^{o}_{m,n+\frac12}$ is the graph obtained from $AR^{o}_{m,n+\frac12}$ by appending $n$ vertical edges to its bottommost vertices; and where ${}_|AR^{e}_{m,n-\frac12}$ is the graph obtained from $AR^{e}_{m,n-\frac12}$ by appending $n$ vertical edges to its bottommost vertices, the leftmost vertical edge is weighted $\frac12$ (the transformation in (\ref{T4a}) is shown in Figure \ref{symT8}, and the transformation in (\ref{T4b}) is illustrated in Figure \ref{symT7}).
\end{lem}

\bigskip

We are now ready to prove Theorem \ref{symthm1}.

\begin{proof}[Proof of Theorem \ref{symthm1}]
We only show in detail the proof for the case when $\alpha$ passes white squares and $w$ is even, as the other cases can be obtained in the same manner.

\medskip

We recall that $\alpha$ is not a drawn-in diagonal, and that $k$ is odd in this case. %Since $\alpha$ is always the symmetry axis of the region, for convenience, in this proof, we use notation $w$, $h$, $C$, $\tau$ for $w(D)$, $h_1(D)$, $C_1(D)$, $\tau(D)$ when the holey Douglas region $D= D_{a}(\textbf{d})$ is determined.

Let $Q$ be any graph with the vertical and horizontal symmetry axes $\ell'$ and $\ell$. We define the \emph{orbit graph}   $\Ob(Q)$ of $Q$ similarly to the proof of Theorem \ref{sym}. In particular, we consider the subgraph $H$ of $Q$ that is induced by the vertices lying on the vertical axis $\ell'$ or staying on the right of $\ell'$. The orbit graph $\Ob(Q)$ of $Q$ is  the graph obtained by identifying two vertices of $H$ on $\ell'$ that have the same distance to the symmetry center $\textbf{O}$, so that the new vertices in $\Ob(Q)$ are on the $\ell$ (i.e. $\Ob(Q)$ also has the horizontal symmetry axis $\ell$). There is always a bijection between the the centrally symmetric perfect matchings of $Q$ and the perfect matchings of its orbit graph $\Ob(Q)$, i.e.
\[\M^*(Q)=\M(\Ob(Q)).\]

\begin{figure}\centering
\includegraphics[width=12cm]{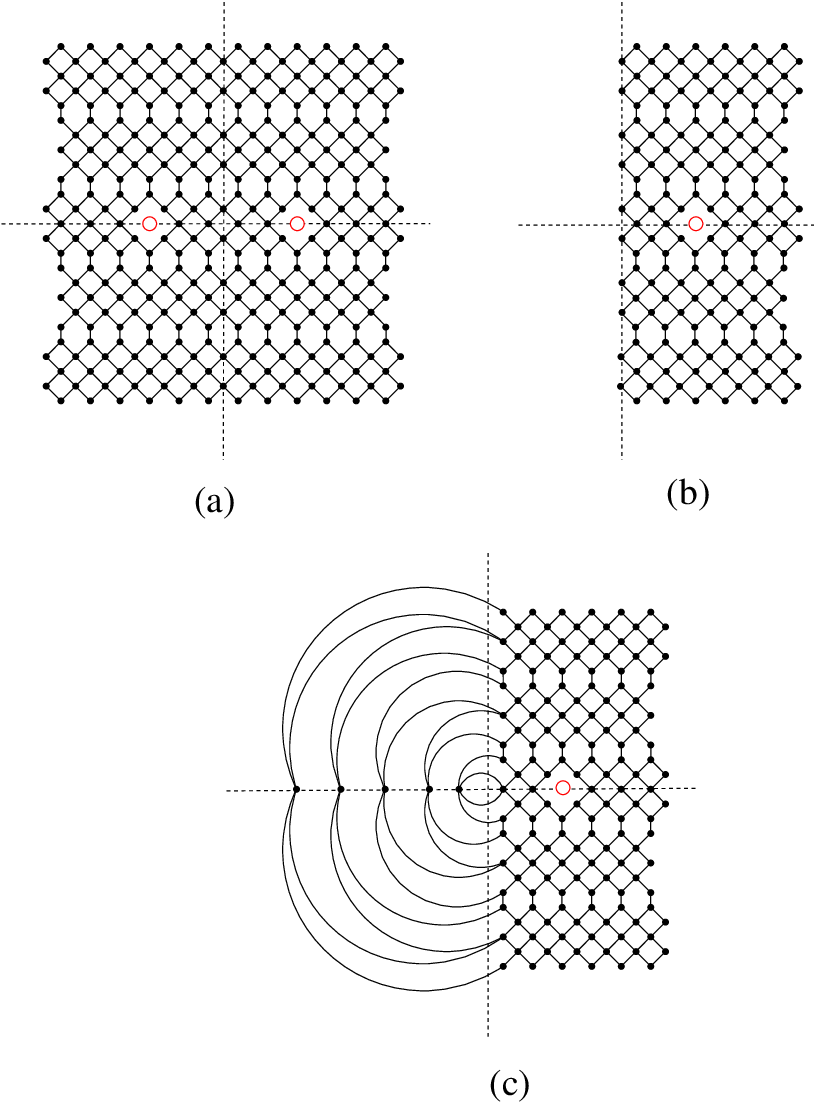}
\caption{The dual graph of the holey Douglas region $\mathcal{D}_{12}(4,4,4,4,4; \{3\})$ and its orbit graph.}
\label{symF1}
\end{figure}

Consider the dual graph $G$ of the region $\mathcal{D}_{a}(\textbf{d};\mathcal{S})$ (rotated $45^\circ$). Its orbit graph $\Ob(G)$ is illustrated in Figure \ref{symF1}. The Factorization Theorem tells us that
\begin{equation}\label{FTeq1}
\M(\Ob(G))=2^{\eta(\Ob(G))}\M(\Ob(G)^+)\M(\Ob(G)^-),
\end{equation}
where $\eta(\Ob(G))$ is half of the number of vertices of $\Ob(G)$ on its horizontal symmetry axis, and where the two component graphs $\Ob(G)^+$ and $\Ob(G)^-$ are illustrated in Figure \ref{symF2}.

 %Our goal is now to find the number of perfect matching of $G''$. We denote the graph $G''$ obtained from $G$ by the procedure above is $\Ob(G)$, we call it the \textit{orbit graph} of $G$. Apply the Factorization Theorem to $\Ob(G)$ with the symmetric axis $\ell'$. We have
%\begin{equation}
%\M(\Ob(G))=2^{\eta(\Ob(G))}\M(\Ob(G)^+)\M(\Ob(G)^-),
%\end{equation}
%where $2\eta(\Ob(G))$ is the number of vertices of $\Ob(G)$ on $\ell'$.

\medskip

By Theorem \ref{QARthm}, we only need to show that
\begin{equation}\label{symrefine}
\M^*(\mathcal{D}_{a}(\textbf{d};\mathcal{S}))=\M(\Ob(G)))= 2^{\mathcal{C}-wh-\tau}\M^*(AR_{2h,w}(\mathcal{S})).
\end{equation}

The $k-1$ drawn-in diagonals divide the region $\mathcal{D}_{a}(\textbf{d};\mathcal{S})$ into $k$ parts, called \emph{layers}.
We prove (\ref{symrefine}) by induction on the number of layers $k$  of $\mathcal{D}=\mathcal{D}_{a}(d_1,d_2,\dotsc,d_k;\mathcal{S})$ (recall that $k$ is odd by the symmetry of the Douglas region).

\medskip

If $k=1$, then the dual graph of $\mathcal{D}_{a}(\textbf{d};\mathcal{S})$ is exactly $AR_{2h,w}(\mathcal{S})$, and (\ref{symrefine}) is a trivial identity.  Assuming that (\ref{symrefine}) is true for all  Douglas regions with holes that have less than $k$ layers, $k\geq 3$,
we need to show that (\ref{symrefine}) also holds for any region with $k$ layers $\mathcal{D}=\mathcal{D}_{a}(d_1,d_2,\dotsc,d_k;\mathcal{S})$.

\begin{figure}\centering
\includegraphics[width=12cm]{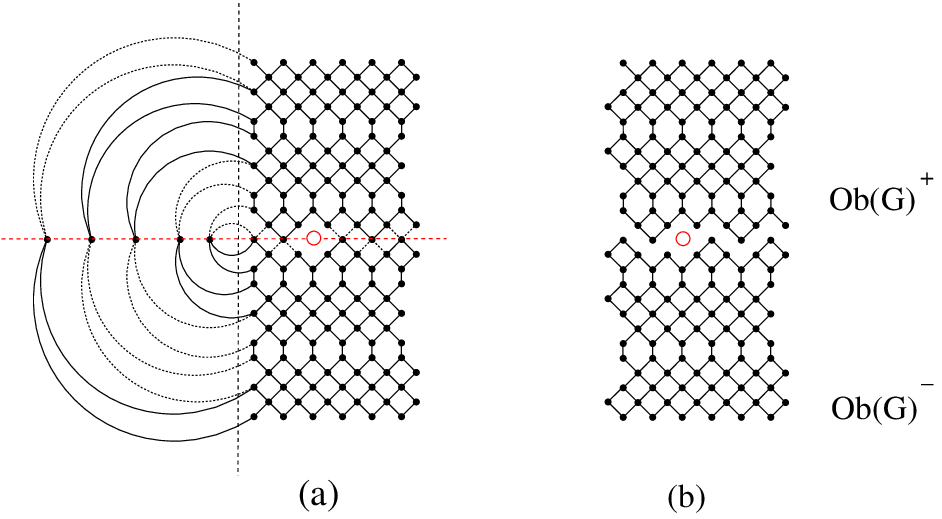}
\caption{Separating the orbit graph of $\mathcal{D}_{12}(4,4,4,4,4; \{3\})$ into two component graphs.}
\label{symF2}
\end{figure}

 There are four cases to distinguish, based on the parities of $d_1$ and $a$.

\medskip

\quad\textit{Case 1. $d_1$ and $a$ are even.}

\medskip

Define a new Douglas region with holes $\mathcal{D}'$ by
\[\mathcal{D}':=\mathcal{D}_{a-1}(d_1+d_2-1,d_3,d_4,\dotsc,d_{k-2}, d_{k-1}+d_k-1; \mathcal{S})  \text{ for $k\geq 5$},\]
and
\[\mathcal{D}':=\mathcal{D}_{a-1}(d_1+d_2+d_3-2; \mathcal{S}) \text{ for $k=3$}.\]
Denote by $G'$ the dual graph of $\mathcal{D}'$. We note that $\mathcal{D}'$ always has $(k-2)$ layers.

The application of the Factorization Theorem to the orbit graph of $G'$ implies
\begin{equation}\label{FTeq2}
\M(\Ob(G'))=2^{\eta(\Ob(G'))}\M(\Ob(G')^+)\M(\Ob(G')^-),
\end{equation}
where $\eta(\Ob(G'))$ is half number of the vertices of $\Ob(G')$ on the symmetry axis (see Figures \ref{symF3}(e) and (f)).

Assume that $d_1/2=2q$. Applying transformation (\ref{T1b}) in Lemma \ref{newT1} to the top part of $\Ob(G)^+$, that corresponds to the first layer of the region $\mathcal{D}$, we get the lower component graph $\Ob(G')^-$ of the orbit graph $\Ob(G')$ of $G'$, and obtain
\begin{equation}
\M(\Ob(G)^+)=2^{q}\M(\Ob(G')^-).
\end{equation}
This process is illustrated in Figures \ref{symF3}(a) and (b).

Similarly, we apply transformation in (\ref{T1a}) in Lemma \ref{newT1} to the bottom part of $\Ob(G)^-$, that corresponds to the bottom layer of $\mathcal{D}$, and get the upper component graph $\Ob(G')^+$ of  of the orbit graph $\Ob(G')$  of $G'$. This implies that
\begin{equation}
\M(\Ob(G)^-)=2^{q}\M(\Ob(G')^+).
\end{equation}
This process is shown in Figures \ref{symF3}(c) and (d).

Multiplying the two equalities above, we get
\begin{equation}\label{FTeq2b}
\M(\Ob(G)^+)\M(\Ob(G)^-)=2^{2q}\M(\Ob(G')^+)\M(\Ob(G')^-).
\end{equation}

Equalities (\ref{FTeq1}), (\ref{FTeq2}), and (\ref{FTeq2b})  now yield
\begin{equation}\label{case1a}
\M(\Ob(G))=2^{2q+\eta(\Ob(G))-\eta(\Ob(G'))}\M(\Ob(G')).
\end{equation}

Since we are assuming that $\alpha$ passes white squares, the number of squares removed from $\alpha$ is $w-2h$. It means that the number vertices of $G$ on $\ell$ is $2h$. Moreover, it is easy to see that the number of vertices of $G$ running along the vertical symmetry axis $\ell'$ is also $2h$. Thus, $\eta(\Ob(G))=h$. Similarly, $\eta(\Ob(G'))=h'$, where $h'$ is the height of $\mathcal{D}'$.

One readily sees that $\mathcal{D}$ and $\mathcal{D}'$ have the same height, so $\eta(\Ob(G))=\eta(\Ob(G'))$ in this case. It means that (\ref{case1a}) can be simplified to
\begin{equation}\label{case1b}
\M(\Ob(G))=2^{2q}\M(\Ob(G')).
\end{equation}

\medskip

Similarly, if $d_1/2=2q+1$, then we can transform the graph $\Ob(G)^+$ into the graph $\Ob(G')^-$ by applying transformation (\ref{T1b}) in Lemma \ref{newT1} to the top part of $\Ob(G)^+$.
This gives us
\begin{equation}
\M(\Ob(G)^+)=2^{q}\M(\Ob(G')^-).
\end{equation}
Next, applying  transformation   (\ref{T1a}) in Lemma \ref{newT1} to the bottom part of $\Ob(G)^-$, we get the graph $\Ob(G')^+$  and
\begin{equation}
\M(\Ob(G)^-)=2^{q+1}\M(\Ob(G')^+).
\end{equation}
Therefore, similar to (\ref{case1b}),  we have the following connection between the numbers of perfect matchings $\Ob(G)$ and $\Ob(G')$:
\begin{equation}\label{case1c}
\M(\Ob(G))=2^{2q+1}\M(\Ob(G')).
\end{equation}

\medskip
\begin{figure}\centering
\includegraphics[width=13cm]{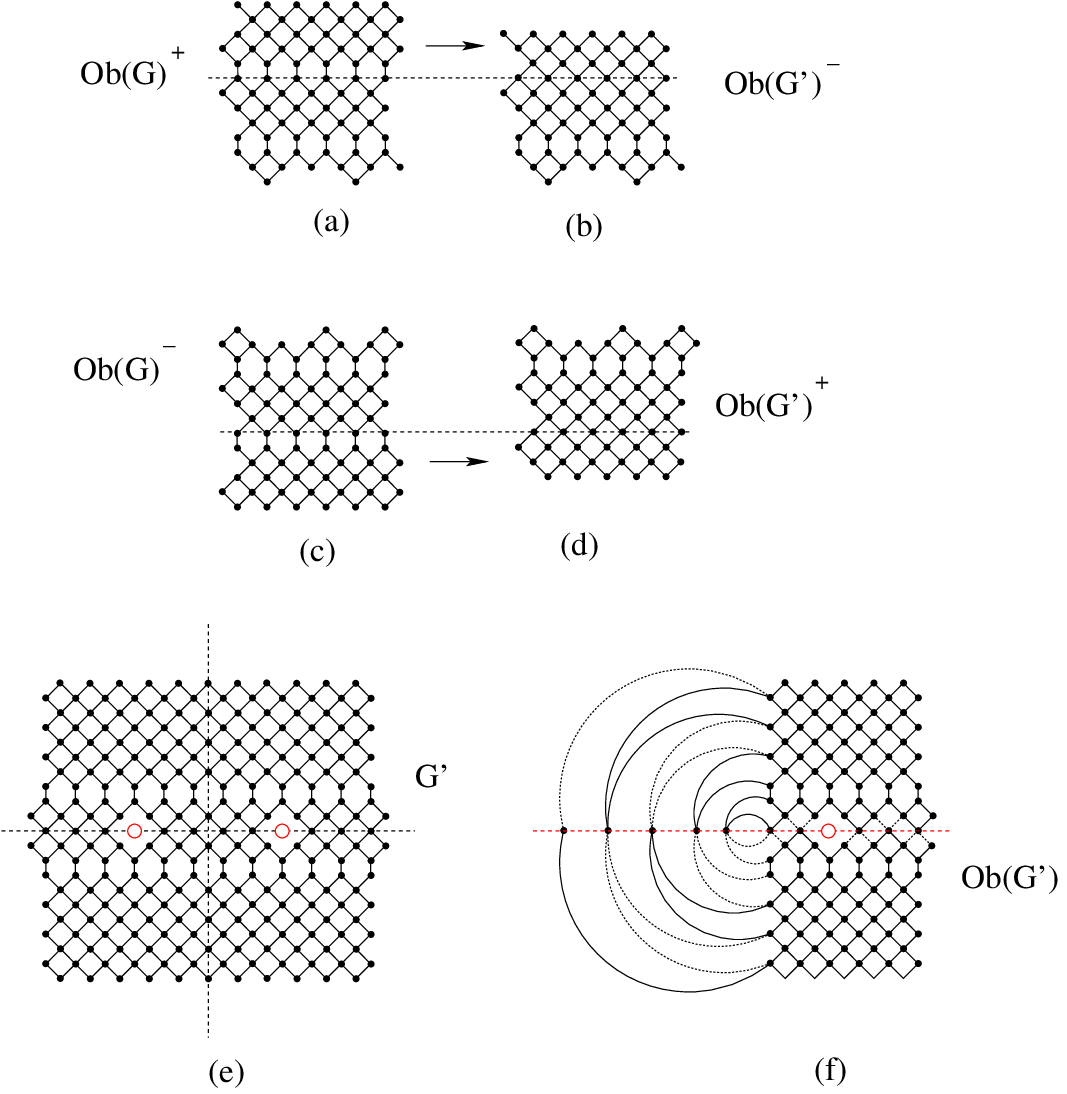}
\caption{Transforming the orbit graph of $\mathcal{D}=\mathcal{D}_{12}(4,4,4,4,4; \{3\})$ into the orbit graph of $\mathcal{D}'=\mathcal{D}_{12}(7,4,7; \{3\})$.}
\label{symF3}
\end{figure}

We can combine (\ref{case1b}) and (\ref{case1c}) into a single identity:
\begin{equation}\label{case1d}
\M(\Ob(G))=2^{d_1/2}\M(\Ob(G')).
\end{equation}

Assume that $\alpha'$ is the axis of $\mathcal{D}'$. Denote by $w', \mathcal{C}',\tau'$ the width, the number of  black regular cells above $\alpha'$, and the defect of $\mathcal{D}'$, respectively.
One readily sees that $w=w'$, $\tau=\tau'$, and
\[\mathcal{C}-\mathcal{C}'=\frac{d_1}{2}(a+1)-\frac{d_1}{2}a=\frac{d_1}{2}.\]

By induction hypothesis for the region $\mathcal{D}'$, we have
\begin{align}
\M(\Ob(G))&=2^{\mathcal{C}'-w'h'-\tau'}\M^*(AR_{2h',w'}(\mathcal{S}))\\
&=2^{\mathcal{C}-d_1/2-wh-\tau}\M^*(AR_{2h,w}(\mathcal{S})),
\end{align}
and  (\ref{symrefine}) follows from (\ref{case1d}).

\medskip

\quad\textit{Case 2. $d_1$ and $a$ are odd.}

\medskip

Define a new Douglas region with holes $\mathcal{D}''$ as
\[\mathcal{D}'':=\mathcal{D}_{a+1}(d_1+d_2+1,d_3,d_4,\dotsc,d_{k-2}, d_{k-1}+d_k+1; \mathcal{S}) \text{ for   $k\geq 5$}, \]
and
\[\mathcal{D}'':=\mathcal{D}_{a+1}(d_1+d_2+d_3+2; \mathcal{S}) \text{ for  $k=3$}. \] Denote by $G''$ its dual graph. We also note that $\mathcal{D}''$ always has $(k-2)$ layers.

Similar to Case 1, we now apply the transformations in Lemma \ref{newT2} to the top part of $\Ob(G)^+$ or the bottom part of $\Ob(G)^-$. If $(d_1+1)/2=2q$, then we get
\begin{equation}
\M(\Ob(G))=2^{-2q+\eta(\Ob(G))-\eta(\Ob(G''))}\M(\Ob(G'')),
\end{equation}
and if $(d_1+1)/2=2q+1$, then
\begin{equation}
\M(\Ob(G))=2^{-2q-1+\eta(\Ob(G))-\eta(\Ob(G''))}\M(\Ob(G'')),
\end{equation}
where $\eta(\Ob(G''))$ is half of the number of vertices of $G''$ on its horizontal symmetry axis.

Moreover, $\mathcal{D}$ and $\mathcal{D}''$ also have the same height, so $\eta(\Ob(G))=\eta(\Ob(G''))$. Thus, we always have in this case
\begin{equation}\label{case2d}
\M(\Ob(G))=2^{-(d_1+1)/2}\M(\Ob(G'')).
\end{equation}

Denote by $w'',h'',\mathcal{C}'',\tau''$ the width, the height, the number of black regular cells in the upper part, and the defect of $\mathcal{D}''$, respectively.
%One can check that the number of rows of black regular rows  above $\ell$ is unchanged.
We also have $h=h''$, $w=w''$, $\tau=\tau''$. Moreover,
\[\mathcal{C}-\mathcal{C}''=\frac{d_1+1}{2}(a+1)-\frac{d_1+1}{2}(a+2)=-\frac{d_1+1}{2}.\]

%Since our procedure only has effects on the top and the bottom layers in $\Ob(G)$, we have $w''=w$. Thus, $w''=h''$ and we can apply the induction hypothesis to the region $D''$ that have $k-2$ layers
%\begin{align}
%\M(\Ob(G))&=2^{-(d_1+1)/2}2^{C''-h''(h''+1)}\M(\Ob(AR_{h''}))\\
%&=2^{-(d_1+1)/2}2^{C+(d_1+1)/2-h(h+1)}\M(\Ob(AR_{h})).
%\end{align}
Thus (\ref{symrefine}) follows from (\ref{case2d}) and the induction hypothesis for $\mathcal{D}''$.

\medskip

\quad\textit{Case 3. $d_1$ is odd and $a$ is even.}

\medskip

We use the same transforming process as in Case 2 by using suitable transformations in Lemma \ref{newT3} to the top part of $\Ob(G)^+$ or the bottom part of $\Ob(G)^-$. This gives us
\begin{equation}
\M(\Ob(G))=2^{-(d_1+1)/2}\M(\Ob(G'')),
\end{equation}
where $G''$ is the dual graph of the region $\mathcal{D}''$ defined as in Case 2. Similarly to Case 2, we have (\ref{symrefine}).

\medskip

\quad\textit{Case 4. $d_1$ is even and $a$ is odd.}

\medskip

Apply the same procedure as that in Case 1 by using suitable transformations in Lemma \ref{newT4} to the top part of $\Ob(G)^+$ or the bottom part of $\Ob(G)^-$.

If $d_1/2=2q$, then
\begin{equation}
\M(\Ob(G))=2^{2q-1+\eta(\Ob(G))-\eta(\Ob(G'))}\M(\Ob(G'));
\end{equation}
and if $d_1/2=2q+1$, then
\begin{equation}
\M(\Ob(G))=2^{2q+\eta(\Ob(G))-\eta(\Ob(G'))}\M(\Ob(G')),
\end{equation}
where $G'$ is the dual graph of the region $\mathcal{D}'$ defined as in Case 1.
It means that we always have
\begin{equation}\label{case4d}
\M(\Ob(G))=2^{d_1/2-1}\M(\Ob(G')).
\end{equation}

Similar to Case 1, we have $w'=w$, $h=h'$, and $\mathcal{C}-\mathcal{C}'=d_1/2$. However, in this case $\tau'=\tau-1$, since we have a singular row staying right below the first layer of $\mathcal{D}$, but it does not appear in $\mathcal{D}'$. Thus, by the induction hypothesis for $\mathcal{D}'$ and (\ref{case4d}), we have
\begin{align}
\M(\Ob(G))&=2^{d_1/2-1}2^{\mathcal{C}'-h'w'-\tau'}\M^*(AR_{2h',w'}(\mathcal{S}))\\
&=2^{d_1/2-1}2^{\mathcal{C}-d_1/2-hw-(\tau-1)}\M^*(AR_{2h,w}(\mathcal{S})).
\end{align}
Then  (\ref{symrefine}) follows. This finishes our proof.
\end{proof}

\section{Symmetric tilings of quasi-hexagons}\label{Mainproof3}

In this section, we use our transformations in Lemmas \ref{newT1}--\ref{newT4} to prove Theorem \ref{hexsym}.

\begin{proof}[Proof of Theorem \ref{hexsym}]
There are two cases to distinguish based on the color of the up-pointing triangles running along the axis of $H_{a}(\textbf{d};\textbf{d})$. We consider first the case when these triangles  are black.

We consider the dual graph $G$ of the region $\mathcal{H}=\mathcal{H}_{a}(\textbf{d};\textbf{d})$ (rotated $45^\circ$) with the horizontal and vertical axes $\ell$ and $\ell'$. Similar to the proof of Theorem \ref{symthm1}, the number of centrally symmetric tilings of $\mathcal{H}$ is equal to the number of centrally symmetric perfect matchings of its dual graph $G$. The latter number in turn equals the number of perfect matchings of the orbit graph $\Ob(G)$ of $G$.

The region $\mathcal{H}$ has $k$ layers above the axis, called the \emph{upper layers}. Next, we prove by induction on the number of upper layers of $\mathcal{H}$ that
\begin{claim}
\begin{equation}\label{thm3eq1}
\M(\Ob(G))=2^{\mathcal{C}-hw-\tau}\M(\Ob(\overline{G})),
\end{equation}
where $\overline{G}$ is the dual graph of the region $\overline{\mathcal{H}}:=\mathcal{H}_{w-1}(2h-1;2h-1)$.
\end{claim}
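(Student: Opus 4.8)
The plan is to prove the Claim by induction on the number $k$ of upper layers of $\mathcal{H}$, reusing almost verbatim the machinery of the proof of Theorem~\ref{symthm1}; I work throughout in the case at hand, where the up-pointing triangles along $\ell$ are black, the white case being entirely parallel. As in the proof of Theorem~\ref{sym}, the centrally symmetric matchings of $G$ biject with the matchings of the orbit graph $\Ob(G)$, and Ciucu's Factorization Theorem~\ref{factorthm} gives $\M(\Ob(G))=2^{\eta(\Ob(G))}\,\M(\Ob(G)^{+})\,\M(\Ob(G)^{-})$, where $\Ob(G)^{+}$ carries the upper layers of $\mathcal{H}$ and $\Ob(G)^{-}$ the lower ones. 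For the base case $k=1$ I will use that the reduction below preserves the width $w$ and the common height $h=h_{1}=h_{2}$; a single-layer symmetric quasi-hexagon of width $w$ and height $h$ is exactly $\overline{\mathcal{H}}=\mathcal{H}_{w-1}(2h-1;2h-1)$, and there a direct count of the black regular cells gives $\mathcal{C}=hw+\tau$, so that the exponent $\mathcal{C}-hw-\tau$ vanishes and the Claim is an identity.

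For the inductive step $k\ge 2$, I peel off the topmost upper layer and, by central symmetry, the bottommost lower layer, merging each into its neighbour. Concretely I apply one of the transformations of Lemmas~\ref{newT1}--\ref{newT4} to the top part of $\Ob(G)^{+}$ and the mirror transformation to the bottom part of $\Ob(G)^{-}$, the choice being dictated by the parities of $d_{1}$ and $a$, exactly the four cases distinguished in Theorem~\ref{symthm1}. The effect is to replace $\mathcal{H}$ by the symmetric quasi-hexagon $\mathcal{H}'=\mathcal{H}_{a'}(\mathbf{d}';\mathbf{d}')$ with one fewer upper layer, where $\mathbf{d}'$ combines $d_{1},d_{2}$ into a single entry $d_{1}+d_{2}\pm1$ and $a'=a\mp1$, formed just as $\mathcal{D}'$ and $\mathcal{D}''$ were in Theorem~\ref{symthm1}. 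Each transformation turns the peeled layer of the orbit graph into the matching layer of $\Ob(G')^{\mp}$ while emitting an explicit power of $2$; multiplying the two resulting identities, recombining through Theorem~\ref{factorthm} applied now to $\Ob(G')$, and using that $\mathcal{H}$ and $\mathcal{H}'$ have the same height (hence $\eta(\Ob(G))=\eta(\Ob(G'))$) yields $\M(\Ob(G))=2^{e}\,\M(\Ob(G'))$ for a case-dependent exponent $e$.

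To close the induction it then suffices to check the bookkeeping identity $e=(\mathcal{C}-\mathcal{C}')-(\tau-\tau')$, using $w=w'$ and $h=h'$; granting it, the induction hypothesis applied to $\mathcal{H}'$ gives $\M(\Ob(G))=2^{e}\,2^{\mathcal{C}'-hw-\tau'}\M(\Ob(\overline{G}))=2^{\mathcal{C}-hw-\tau}\M(\Ob(\overline{G}))$, which is the Claim. The values of $e$ and of $\mathcal{C}-\mathcal{C}'$ and $\tau-\tau'$ are read off exactly as in Cases~1--4 of Theorem~\ref{symthm1}: in most cases $\tau=\tau'$ and $e=\mathcal{C}-\mathcal{C}'=(d_{1}\pm1)/2$, while when absorbing the first layer creates or destroys a singular row one has $\tau'=\tau\mp1$ and $e$ is shifted by the compensating unit, precisely as happens in Case~4 there.

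The hard part will be this exponent accounting, done simultaneously across the four parity cases and kept compatible with the two colours of the up-pointing triangles along $\ell$. Unlike Theorem~\ref{symthm1}, whose reduction merged two layers per step and terminated at a clean Aztec rectangle $AR_{2h,w}(\mathcal{S})$, here the single-layer target is the quasi-hexagon $\overline{\mathcal{H}}$, whose orbit graph must afterwards be identified (in the final step of the proof of Theorem~\ref{hexsym}) with the self-complementary plane partition product $P(a,b,c)$. The delicate point is to confirm that each application of Lemmas~\ref{newT1}--\ref{newT4} reproduces the orbit graph of the reduced region itself---so that the vertex identifications on $\ell'$ mesh correctly with the appended vertical edges of the ${}_{|}AR$ and ${}_{|}OR$ graphs---and that the defect $\tau$ enters the accumulated exponent with exactly the right sign at every merge.
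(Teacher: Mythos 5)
There is a genuine gap, and it is precisely the point the paper goes out of its way to flag. Your inductive step is built on applying Ciucu's Factorization Theorem~\ref{factorthm} to $\Ob(G)$ to write $\M(\Ob(G))=2^{\eta(\Ob(G))}\M(\Ob(G)^{+})\M(\Ob(G)^{-})$, with the upper layers in $\Ob(G)^{+}$ and the lower layers in $\Ob(G)^{-}$. That theorem requires the on-axis vertices to be a \emph{cut set}, and for a quasi-hexagon they are not: unlike the Douglas region of Theorem~\ref{symthm1}, whose axis $\alpha$ is not a drawn-in diagonal (so all cells along it are squares and removing the corresponding vertices disconnects the dual graph), the axis $\ell$ of $\mathcal{H}_{a}(\textbf{d};\textbf{d})$ \emph{is} a drawn-in diagonal. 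The cells flanking it are triangles whose bases rest on $\ell$, and each up-pointing triangle is adjacent to the down-pointing triangle sharing its base, so the dual graph has edges crossing $\ell$ that bypass the vertices $a_i,b_j$ entirely. Consequently the graphs $\Ob(G)^{\pm}$ you propose to transform do not exist as separate components, and the factorization identity you start from is unjustified. Everything downstream (transforming the two components separately, ``recombining through Theorem~\ref{factorthm} applied now to $\Ob(G')$'', cancelling $\eta(\Ob(G))=\eta(\Ob(G'))$) inherits this flaw.

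The paper's actual proof avoids factorization altogether. It invokes Lemma 2.1 of \cite{Ciucu} (not the Factorization Theorem): all $2^{\eta(G)}$ graphs obtained from $\Ob(G)$ by cutting above or below each $a_i$ have the same number of matchings, so $\M(\Ob(G))=2^{\eta(G)}\M(Si(G))$, where $Si(G)$ is one specific cut graph. Defining $Si(G)$ correctly requires an extra argument you do not have: after cutting at the $a_i$'s, one must show (via a bipartite class-size count over the $2^{\eta(G)}$ further cuttings at the $b_j$'s) that every perfect matching necessarily matches white $b_j$'s upward and black $b_j$'s downward, which legitimizes cutting there as well. The layer-merging transformations of Lemmas~\ref{newT1}--\ref{newT4} are then applied to the single connected graph $Si(G)$, acting on its top and bottom parts simultaneously, and the factor $2^{\eta(G)}$ cancels against $2^{\eta(G')}$ because the height is preserved. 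Your exponent bookkeeping $e=(\mathcal{C}-\mathcal{C}')-(\tau-\tau')$ and your base case ($\mathcal{C}=hw+\tau$ for a single-layer region) match the paper's accounting, but without the $Si(G)$ device the induction has no valid engine, so the proposal as written does not constitute a proof.
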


\begin{proof} If $k=1$, then (\ref{thm3eq1}) is a trivial  identity, since $\mathcal{H}=\overline{\mathcal{H}}$. Assume that (\ref{thm3eq1}) holds for any symmetric quasi-hexagon with less than $k$ ($k\geq 2$) upper layers, we need to show that the equality holds also for any symmetric quasi-hexagon $\mathcal{H}=\mathcal{H}_{a}(d_1,\dotsc,d_k;d_1,\dotsc,d_k)$.

 We use similar arguments to that in the proof of Theorem \ref{symthm1}. In particular, we transform the orbit graph of $\mathcal{H}_{a}(\textbf{d};\textbf{d})$ into the orbit of a symmetric quasi-hexagon that has less layers by using the suitable transformations in Lemmas \ref{newT1}--\ref{newT4}.

We only show in detail the proof for the case when $a$ and $d_1$ are even, as the other cases can be obtained similarly.

Similar to the Theorem 7.1 in \cite{Ciucu}, we notice that we \emph{cannot} apply Ciucu's Factorization Theorem  directly here, since the vertices of the orbit graph $\Ob(G)$ on $\ell$, $\{a_1,b_1,a_2,b_2,\dotsc,a_{\eta(\Ob(G))},b_{\eta(\Ob(G))}\}$, do \emph{not} form a cut set. However, the Lemma 2.1 in \cite{Ciucu} still applies, and it means that all $2^{\eta(G)}$ graphs, that are obtained from $\Ob(G)$ by cutting edges from above or below each of $a_i$'s, have the same number of perfect matchings. We now consider a cutting procedure at the vertices $a_i$'s as follows. First, we color these vertices of $\Ob(G)$ inductively from left to right: color $a_1$ by white, then color the next vertex the same color as its left one if there is not an edge connecting them, otherwise we use the opposite color (see Figure \ref{symmetricfig4}(b)).  Assume that $Q^*$ is the graph obtained from $\Ob(G)$ by cutting above all white $a_i$'s and below all black $a_i$'s.  We will show in the next paragraph that all perfect matchings of $Q^*$ have the white $b_j$'s matched upward, and black $b_j$'s matched downward.

\begin{figure}\centering
\includegraphics[width=13cm]{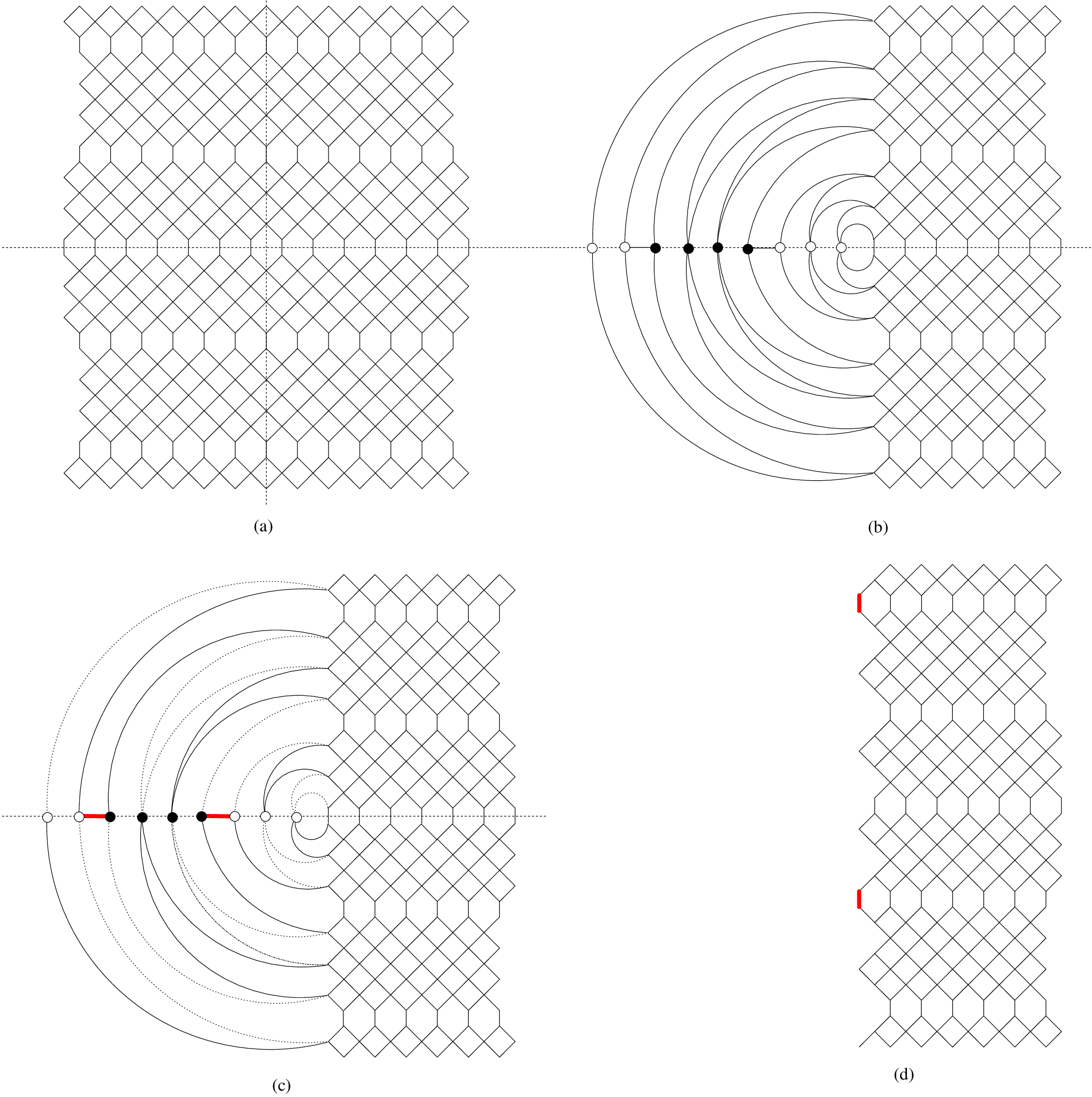}
\caption{Obtaining the graph $Si(G)$ from $G$. The dotted edges indicate the ones cut off. The bold edges have weight $\frac12$.}
\label{symmetricfig4}
\end{figure}

Indeed, we consider the collection $\mathfrak{G}$ of $2^{\eta(G)}$ graphs obtained from $Q^*$ by cutting at all edges incident to $b_j$'s from above or below. The matching set of $Q^*$ is in bijection to disjoint union of matching sets of the members in $\mathfrak{G}$. Recall that if a bipartite graph admits a perfect matching, then its vertex classes must have the same size. All members $Q$ of $\mathfrak{G}$ are bipartite graphs, and it easy to check that its two vertex classes have the same size only if $Q$ is obtained from cutting below all white $b_j$'s and above all black $b_j$'s.

Now, denote by $Si(G)$ the graph obtained from the orbit graph $\Ob(G)$ of $G$ by cutting above all white $a_i$'s and black $b_j$'s, and below all black $a_i$'s and white $b_j$'s (illustrated in in Figure \ref{symmetricfig4}(c)). Moreover, $Si(G)$ can be deformed into a weighted subgraph of $G$ as in Figure \ref{symmetricfig4}(c). Then we get
\begin{equation}
\M(\Ob(G))=2^{\eta(G)}\M(Si(G)).
\end{equation}
 Applying the transformations in Lemma \ref{newT1} to the top and bottom parts of $Si(G)$, that correspond to the top and bottom layers of $\mathcal{H}$,  we get the graph isomorphic to $Si(G')$, where $G'$ is the dual graph of  the quasi-hexagon $\mathcal{H}'$ defined by
\[\mathcal{H}':=\mathcal{H}_{a-1}(d_1+d_2-1,d_3,\dotsc,d_k;\ d_1+d_2-1,d_3,\dotsc,d_k),\]
and where $Si(G')$ is the graph obtained from the orbit graph $\Ob(G')$ of $G'$ by the same cutting procedure as in the case of $G$.
We obtain
\begin{align}\label{FTeq4}
\M(\Ob(G))&=2^{\eta(G)}Si(G)\notag\\
&=2^{\eta(G)}2^{d_1/2}Si(G')\notag\\
&=2^{d_1/2}2^{\eta(G')}Si(G')\notag\\
&=2^{d_1/2}\M(\Ob(G')).
\end{align}
By (\ref{FTeq4}), the induction hypothesis, and explicit calculation of the statistics of the region $\mathcal{H}'$, we obtain
\begin{align}
\M(\Ob(G))&=2^{d_1/2}2^{\mathcal{C}'-h'w'-\tau'}\M(\Ob(\overline{G}))\\
&=2^{d_1/2}2^{\mathcal{C}-d_1/2-hw-\tau}\M(\Ob(\overline{G})),
\end{align}
where $\mathcal{C}',h',w',\tau'$ refer to $\mathcal{H}'$ corresponding to their unprimed counterparts in $\mathcal{H}$.
Then (\ref{thm3eq1}) follows, and this finishes the proof of our claim.
\end{proof}

\bigskip

Consider the dual graph $G''$ of the symmetric quasi-hexagon region
\[\mathcal{H}'':=\mathcal{H}_{w-h}(\textbf{1}^{h};\textbf{1}^{h}),\]
where all $d_1,d_2,\dotsc,d_k$ are $1$, and where $k=h$.
$\mathcal{H}''$ is exactly the semi-regular hexagon of side-lengths $h,w-h,h,h,w-h,h$ on the triangular lattice.
Applying the claim above to the orbit graph $\Ob(G'')$ of $G''$, we have
\begin{equation}\label{thm3eq2}
\M(\Ob(G''))=2^{-h(h-1)/2}\M(\Ob(\overline{G})).
\end{equation}
Thus, by (\ref{thm3eq1}) and (\ref{thm3eq2}), we obtain
\begin{equation}\label{thm3eq3}
\M(\Ob(G))=2^{C-h(2w-h+1)/2-\tau}\M(\Ob(G'')).
\end{equation}
The number of perfect matchings  of $\Ob(G'')$ is given by Ciucu's Theorem 7.1 in \cite{Ciucu6}, and our theorem follows in the case when the triangles right above the axis $\ell$ are black .

\begin{figure}\centering
\includegraphics[width=10cm]{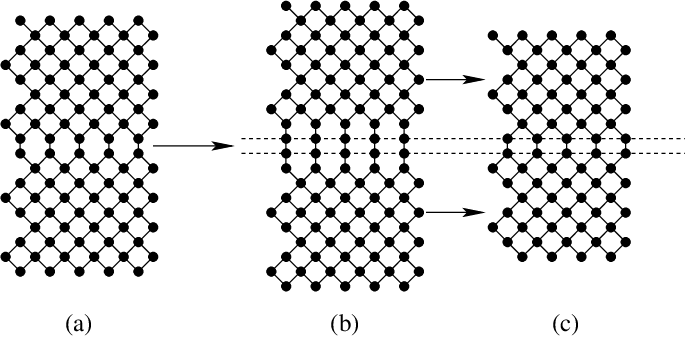}
\caption{Transforming $Si(\overline{\overline{G}})$ into $Si(\overline{G})$.}
\label{symmetricfig5}
\end{figure}

\medskip

Next, we consider the case where the triangles right above $\ell$ are white. Similarly, we can prove by induction on the number of upper layers $k$ that
\begin{align}
\M(\Ob(G))&=2^{C-h(w+1)-\tau}\M(\Ob(\overline{\overline{G}}))\\
&=2^{C-h(w+1)-\tau}2^{\eta(\overline{\overline{G}})}\M(Si(\overline{\overline{G}})),
\end{align}
where $\overline{\overline{G}}$ is the dual graph of the region $\overline{\overline{\mathcal{H}}}:=\mathcal{H}_{w}(2h;2h)$, and $Si(\overline{\overline{G}})$ is the graph obtained from  the orbit graph $\Ob(\overline{\overline{G}})$ of $\overline{\overline{G}}$ by applying the cutting procedure in the previous case.

Next, we apply the Vertex-splitting Lemma to all vertices at the bottom of the upper part of $Si(\overline{\overline{G}})$ (see Figure \ref{symmetricfig5}(b)), and use the suitable transformations in Lemmas \ref{newT1}--\ref{newT4} to transform $Si(\overline{\overline{G}})$ into $Si(\overline{G})$, where $\overline{G}$ is the dual graph of $\mathcal{H}_{w-1}(2h-1;2h-1)$ as defined in the previous case (see Figure \ref{symmetricfig5} (c)). Then this case follows from the case treated above. This finishes the proof of our theorem.
\end{proof}

\section{Concluding remarks}

This paper and its prequels \cite{Tri1, Tri2} have shown the power of the subgraph replacement method in  the enumeration of tilings. The method helps us transform complicated graphs into simple graphs whose matching numbers are known. 

One of the main ingredients of the method is the Spider Lemma (Lemma \ref{spider}). The local transformation in this lemma, that is known as the `\emph{urban renewal}' or `\emph{domino shuffling}', was first found by Greg Kuperberg. James Propp generalized it \cite{Propp2} and used the generalization to prove Stanley's formula for weighted tilings of the Aztec diamond \cite{Propp3}. Douglas later used a variant of the urban renewal to obtain his theorem in \cite{Doug}.

It is worth noticing  that Ciucu developed a useful linear algebraic version of this transformation to obtain tiling formulas for a number of Aztec-diamond-like regions \cite{CiucuP}. See also \cite{Tri8} for a sequel of Ciucu's paper written by the author. 

We refer the reader to e.g. \cite{Ciucu1,Young2,KP,Tri9,Tri10,LM,Young3,Young1} for more applications of the subgraph replacement method.


\begin{thebibliography}{30}
\bibitem{Car}
L. Carlitz and R. Stanley, Branching and partitions,
\textit{Proc. Amer. Math. Soc.}, \textbf{53}(1) (1975), 246--249.

\bibitem{Ciucu}
M. Ciucu,
Enumeration of perfect matchings in graphs with reflective symmetry,
\textit{J. Combin. Theory Ser. A}, \textbf{77} (1997), 67--97.


\bibitem{Ciucu1}
M. Ciucu, A complementation theorem for perfect matchings of graphs having a cellular completion,
\textit{J. Combin. Theory Ser. A}, \textbf{81} (1998), 34--68.

%\bibitem{Ciucu2}
%M. Ciucu,
%\newblock Enumeration of lozenge tilings of punctured hexagons.
%\newblock \textit{J. Combin. Theory Ser. A} \textbf{83}: 268--272, 1998.

% \bibitem{Ciucu4}
%M. Ciucu,
%\emph{Plane partitions I: A generalization of MacMahon's formula},
%Memoirs of Amer. Math. Soc. \textbf{178}(839) (2005), 107--144.
\bibitem{CiucuP} M. Ciucu, Perfect matchings and perfect powers, \textit{J. Algebraic Combin.}, \textbf{17} (2003), 335--375.

 \bibitem{Ciucu5}
M. Ciucu,
\emph{Perfect matchings and applications},
COE Lecture Note, No. 26 (Math-for-Industry Lecture Note Series). Kyushu University, Faculty of Mathematics, Fukuoka (2000), 1--67.


%\bibitem{Ciucu3}
%M. Ciucu, T. Eisenkolbl, C. Krattenthaler and D. Zare,
%Enumeration of lozenge tilings of hexagons with a central triangular hole. \newblock \textit{J. Combin. Theory Ser. A }\textbf{95}: 251--334, 2001.



\bibitem{Ciucu6}
M. Ciucu and C. Krattenthaler,
Plane partitions II: $5\frac{1}{2}$ Symmetric Classes,
\textit{Adv. Stud. Pure Math.}, \textbf{28} (2000), 83--103.



\bibitem{Cohn}
 H. Cohn, M. Larsen and J. Propp, The shape of a typical boxed plane partition,
\textit{New York J. Math.}, \textbf{4} (1998),  137--165.

\bibitem{Young2}
C. Cottrell and B. Young, Domino shuffling for the Del Pezzo 3 lattice, 2011. Preprint: \texttt{https://arxiv.org/abs/1011.0045}.

\bibitem{Doug}
C. Douglas,
An illustrative study of the enumeration of tilings: Conjecture discovery and proof techniques, 1996. \newblock Available at \texttt{http://citeseerx.ist.psu.edu/viewdoc/summary?doi=10.1.1.44.8060}

%\bibitem{Eisen}
%T. Eisenkoelbl,
%\newblock Rhombus tilings of a hexagon with two triangles missing on the symmetry axis. \newblock \textit{Elec. J. Combin.} \textbf{6} (1): R30, 1999.


\bibitem{Elkies1}
N. Elkies, G. Kuperberg, M.Larsen and J. Propp,
Alternating-sign matrices and domino tilings (Part I), \textit{J. Algebraic Combin.}, \textbf{1} (1992),  111--132.

\bibitem{Elkies2}
N. Elkies, G. Kuperberg, M.Larsen and J. Propp,
Alternating-sign matrices and domino tilings (Part II), \textit{J. Algebraic Combin.}, \textbf{1} (1992),  219--234.

%\bibitem{Ful}
%M. Fulmek and C. Krattenthaler,
%\newblock The number of rhombus tilings of a symmetric hexagon which contain a fixed rhombus on the symmetric axis II.  \newblock \textit{Europ. J. Combin.}\textbf{21}: 601--640, 2000.


%\bibitem{Gessel}
%H. Helfgott and I. M. Gessel,
%\newblock Enumeration of tilings of diamonds and hexagons with defects.\newblock \textit{Electron. J. Combin.} \textbf{6}: R16, 1999.

 \bibitem{JP}
W.  Jockusch and J. Propp,
Antisymmetric monotone triangles and domino tilings of quartered Aztec diamonds, \textit{Unpublished work}.

%\bibitem{Krat}
%Krattenthaler, \newblock C. Schur function identities and the number of perfect matchings of holey Aztec rectangles.
%\newblock \textit{q-series from a contemporary perspective (South Hadley, MA, 1998)}, 335--349, Contemp. Math., 254, Amer. Math. Soc., Providence, RI, 2000.

\bibitem{KP}
R. Kenyon and R. Pemantle,  Double-dimers, the Ising model and the hexahedron recurrence,  \textit{J. Combin. Theory Ser. A}, \textbf{137} (2016), 27--63.

\bibitem{McM}
P. A. MacMahon, \textit{Combinatory Analysis}, Cambridge Univ. Press, London, 1916, reprinted by Chelsea, New York, 1960.

\bibitem{McM2} P. A. MacMahon, Partitions of numbers whose graphs possess symmetry, \textit{Trans. Cambridge Philos.
Soc.}, \textbf{17} (1899), 149--170.

%\bibitem{Mills}
%W. H. Mills, D. H. Robbins and H. Rumsey, \textit{Alternating sign matrices and descending plane partitions}, J. Combin. Theory Ser. A \textbf{34} (1983), 340--359.

\bibitem{Tri8}
T. Lai, New aspects of regions whose tilings are enumerated by perfect powers, \textit{Electron. J. Combin.}, \textbf{20} (4) (2013), P31.

\bibitem{Tri1}
T. Lai, Enumeration of hybrid domino-lozenge tilings, \textit{J. Combin. Theory Ser. A}, \textbf{122}(2014),  53--81.


\bibitem{Tri5}
T. Lai,  A simple proof for the number of tilings of quartered Aztec diamonds, \textit{Electron. J.  Combin.}, \textbf{21}(1) (2014),   P1.6.

\bibitem{Tri2}
T. Lai, Enumeration of hybrid domino-lozenge tilings II: Quasi-octagonal regions, (2014), \textit{Electron. J.  Combin.},  \textbf{23}(2) (2016), P2.2.

\bibitem{Tri3}
T. Lai, Enumeration of tilings of quartered Aztec rectangles, \textit{Electron. J. Combin.}, \textbf{21}(4) (2014), P4.46.

\bibitem{Tri6}
T. Lai, A generalization of Aztec diamond theorem, part I, \textit{Electron. J. Combin.},  \textbf{21}(1) (2014), P1.51.



\bibitem{Tri7}
T. Lai, A generalization of Aztec diamond theorem, part II, \textit{Discrete Math.}, \textbf{339}(3) (2016), 1172--1179.

\bibitem{Tri9} T. Lai, Generating Function of the Tilings of an Aztec Rectangle with Holes, \textit{Graph.  Combin.}, \textbf{32}(3) (2016), 1039--1054.

\bibitem{Tri10} T. Lai, Double Aztec Rectangles, \textit{Adv. Appl. Math.}, \textbf{75} (2016), 1--17.



\bibitem{LM}
T. Lai and G. Musiker, Dungeons and dragons: Combinatorics for the $dP_3$ quiver, \textit{Accepted for publication in Annals of Combinatorics (2019)}. Preprint \texttt{https://arxiv.org/abs/1805.09280}.

\bibitem{Young3} E. Nordenstam and B. Young, Domino shuffling on Novak half-hexagons
and Aztec half-diamonds, \textit{Electron. J. Combin.}, \textbf{18} (2011), P181.

\bibitem{Propp}
J. Propp,
Enumeration of matchings: Problems and progress,
\textit{New Perspectives in Geometric Combinatorics},  Cambridge University Press (1999), 255--291.

\bibitem{Propp2}
J. Propp, Generalized domino-shuffling, \textit{Theoret. Comput. Sci.},  \textbf{303}(2-3) (2003), 267--301.

\bibitem{Propp3}
J. Propp, A new proof of a formula of Richard Stanley, talk presented at the \emph{Mathematics Joint Meeting}, San Diego 1997. Slides are available at \texttt{http://faculty.uml.edu/jpropp/}\texttt{san\_diego.pdf}.

\bibitem{Stanley2}
R. Stanley, Symmetric plane partitions, \textit{J. Combin. Theory Ser. A}, \textbf{43} (1986), 103--113.


\bibitem{Stanley}
R. Stanley,
\emph{Enumerative Combinatorics, Vol. 2 }, Cambridge Univ. Press, 1999.



\bibitem{Yang}
B.-Y. Yang, \emph{Two enumeration problems about Aztec diamonds},
Ph.D. thesis, Department of Mathematics, Massachusetts Institute of Technology, MA-USA, 1991.




\bibitem{Young1}  B. Young, Computing a pyramid partition generating function with dimer
shuffling, \textit{J. Combin. Theory Ser. A}, \textbf{116} (2)  (2009), 334--350.



\end{thebibliography}
\end{document}